\theoremstyle{plain}
  \newtheorem{thm}{Theorem}[section]
  \newtheorem{lem}[thm]{Lemma}
  \newtheorem{prop}[thm]{Proposition}
  \newtheorem{cor}[thm]{Corollary}
\theoremstyle{definition}
  \newtheorem{defn}[thm]{Definition}
  \newtheorem{exmp}[thm]{Example}
  \newtheorem{rem}[thm]{Remark}
\begin{document}
\newcommand\thda{\mathrel{\rotatebox[origin=c]{-90}{$\twoheadrightarrow$}}}
\newcommand\thua{\mathrel{\rotatebox[origin=c]{90}{$\twoheadrightarrow$}}}

\newcommand{\oto}{{\lra\hspace*{-3.1ex}{\circ}\hspace*{1.9ex}}}
\newcommand{\lam}{\lambda}
\newcommand{\da}{\downarrow}
\newcommand{\Da}{\Downarrow\!}
\newcommand{\D}{\Delta}
\newcommand{\ua}{\uparrow}
\newcommand{\ra}{\rightarrow}
\newcommand{\la}{\leftarrow}
\newcommand{\Lra}{\Longrightarrow}
\newcommand{\Lla}{\Longleftarrow}
\newcommand{\rat}{\!\rightarrowtail\!}
\newcommand{\up}{\upsilon}
\newcommand{\Up}{\Upsilon}
\newcommand{\ep}{\epsilon}
\newcommand{\ga}{\gamma}
\newcommand{\Ga}{\Gamma}
\newcommand{\Lam}{\Lambda}
\newcommand{\CF}{{\cal F}}
\newcommand{\CG}{{\cal G}}
\newcommand{\CH}{{\cal H}}
\newcommand{\CI}{{\cal I}}
\newcommand{\CB}{{\cal B}}
\newcommand{\CT}{{\cal T}}
\newcommand{\CS}{{\cal S}}
\newcommand{\CV}{{\cal V}}
\newcommand{\CP}{{\cal P}}
\newcommand{\CQ}{{\cal Q}}
\newcommand{\mq}{\mathcal{Q}}
\newcommand{\cu}{{\underline{\cup}}}
\newcommand{\ca}{{\underline{\cap}}}
\newcommand{\nb}{{\rm int}}
\newcommand{\Si}{\Sigma}
\newcommand{\si}{\sigma}
\newcommand{\Om}{\Omega}
\newcommand{\bm}{\bibitem}
\newcommand{\bv}{\bigvee}
\newcommand{\bw}{\bigwedge}
\newcommand{\lra}{\longrightarrow}
\newcommand{\tl}{\triangleleft}
\newcommand{\tr}{\triangleright}
\newcommand{\dda}{\downdownarrows}
\newcommand{\dia}{\diamondsuit}
\newcommand{\y}{{\bf y}}
\newcommand{\colim}{{\rm colim}}
\newcommand{\fR}{R^{\!\forall}}
\newcommand{\eR}{R_{\!\exists}}
\newcommand{\dR}{R^{\!\da}}
\newcommand{\uR}{R_{\!\ua}}
\newcommand{\swa}{{\swarrow}}
\newcommand{\sea}{{\searrow}}
\newcommand{\bbA}{{\mathbb{A}}}
\newcommand{\bbB}{{\mathbb{B}}}
\newcommand{\bbC}{{\mathbb{C}}}
\numberwithin{equation}{section}
\renewcommand{\theequation}{\thesection.\arabic{equation}}

\title{{Scott approach distance on metric spaces\footnote{This work is supported by National Natural Science Foundation of China (11771130).}}
\\ {\small Dedicated to  Robert Lowen on his 70th birthday}
}

\author{Wei Li\footnote{Chengdu Shishi High School, Wenmiaoqianjie 93, Chengdu, China, email: mathli@foxmail.com}, Dexue Zhang\footnote{School of Mathematics, Sichuan University, Chengdu, China, email: dxzhang@scu.edu.cn}}

\date{}

\maketitle

\begin{abstract} The notion of Scott  distance between points and subsets in a  metric space,   a metric analogy of the Scott topology on an ordered set, is introduced, making a metric space into an approach space. Basic properties of Scott  distance are investigated, including its topological coreflection and its relation to injective $T_0$ approach spaces. It is proved that the topological coreflection of the Scott   distance is sandwiched between the   $d$-Scott topology and the generalized Scott topology; and that every injective $T_0$ approach space  is a cocomplete and continuous metric space equipped with its Scott  distance.

\noindent\textbf{Keywords} Metric space, Approach space, Scott  distance,   $d$-Scott topology, Generalized Scott topology, Algebraic metric space,  Continuous metric space, Injective approach space

\noindent \textbf{MSC(2010)} 06B35, 18B30, 18B35, 54A05, 54E35 \end{abstract}

\section{Introduction}

In 1989,   Lowen \cite{RL89} introduced approach spaces as a common extension of metric spaces and topological spaces.
As explored in the monograph \cite{Lowen15}, approach spaces are closely related to many disciplines in mathematics, e.g. topology, analysis, probability, domain theory and etc.  This paper  focuses on one aspect of approach spaces, that is, their relation to metric spaces from the   viewpoint of domain theory. On one hand,  following Lawvere \cite{Lawvere73},  metric spaces (not necessarily symmetric) can be thought of as ordered sets valued in the  closed category $([0,\infty]^{\rm op},+)$. This point of view has led to the theory of quantitative domains, initiated by Smyth \cite{Smyth87,Smyth94}, with  metric spaces as  core objects, see e.g.  \cite{AR89,BvBR1998,FK97,FS02,Goubault,HW2011,HW2012,KW2011,Ru}. On the other hand, as advocated in  \cite{CHT,GH,Hof2011,Hof2013,HST}, approach spaces can be thought of as topological spaces valued in $([0,\infty]^{\rm op},+)$. This means that the theory of approach spaces is a theory of ``quantitative topological spaces". Thus, the relationship between metric spaces and approach spaces is analogous to that between ordered sets and topological spaces.

The interplay between order theoretic and topological properties of ordered sets is one of the main themes in domain theory  \cite{domains}. The Scott topology plays a prominent role in this regard. In 2000, Windels \cite{Windels} attempted to extend the theory of Scott topology to the metric setting, and succeeded in postulating the notion of Scott approach distance (Scott distance, for short) for algebraic metric spaces. But, the postulation in \cite{Windels} depends on the fact that an algebraic metric space has enough compact elements,  it is not applicable to a general metric space.

In this paper, we present a postulation of Scott distance for a general metric space, via help of Scott weights that are  a metric counterpart of Scott closed sets in ordered sets.  For an algebraic metric space, the Scott distance given here coincides with the one in  \cite{Windels}.

Basic properties of the Scott distance on metric spaces  are investigated in this paper, including its topological coreflection and its relation to injective $T_0$ approach spaces. It is shown that sending a metric to  its Scott distance yields a full embedding of the category of metric spaces and Yoneda continuous maps in the category of approach spaces. The topological coreflection of the Scott distance is a natural topology for a metric space, and it is sandwiched between the well-known  $d$-Scott topology and generalized Scott topology. Finally, it is shown that every injective $T_0$ approach space  is a cocomplete and continuous metric space equipped with its Scott distance, but, the converse fails.

\section{Preliminaries: metric spaces and approach spaces}
Following Lawvere \cite{Lawvere73}, by a metric space we mean a pair $(X,d)$ consisting of a set $X$ and a map $d:X\times X\lra [0,\infty]$ such that  $d(x,x)=0$ and $d(x,y)+d(y,z)\geq d(x,z)$ for all $x,y, z\in X$. The map $d$ is called a metric on $X$, the value $d(x,y)$   the distance from $x$ to $y$. Such spaces are also known as generalized metric spaces, pseudo-quasi-metric spaces, and hemi-metric spaces.

A non-expansive map $f: (X,d_X)\lra (Y,d_Y)$ between metric spaces is a map $f:X\lra Y$ such that $d_X(x,y)\geq d_Y(f(x),f(y))$ for all $x, y$ in $X$. A map $f: (X,d_X)\lra (Y,d_Y)$ between metric spaces is isometric if $d_X(x,y)=d_Y(f(x),f(y))$ for all $x,y\in X$.

Metric spaces and non-expansive maps form a complete category,   denoted by ${\sf Met}$.   In particular, the product of  a family of metric spaces $ (X_i,d_i), i\in I$,   is given by the set $\prod_i X_i$ equipped with the  metric $d((x_i),(y_i))=\sup_{i\in I}d_i(x_i,y_i)$.

An approach space \cite{RL89,RL97} is a pair $(X,\delta)$ consisting of a set $X$ and  a map $\delta:X\times 2^X \lra [0,\infty]$,   called an approach distance (distance, for short) on $X$, subject to the following conditions:   for all $x\in X$  and $ A,B\in 2^X$,
\begin{enumerate}[label={\rm(A\arabic*)}] \setlength{\itemsep}{-2pt}
\item  $\delta(x, \{x\})=0$;
\item  $\delta(x, \varnothing)=\infty$;
\item  $\delta(x, A\cup B)=\min\{\delta(x, A),\delta(x, B)\}$;
\item   $\delta(x,A)\leq\delta(x,B)+\sup_{b\in B}\delta(b,A)$. \end{enumerate}

In the original definition of approach spaces \cite{RL89,RL97}, instead of (A4), the following condition is used: \begin{enumerate}\item[\rm (A4')]  For all $\varepsilon\in[0,\infty]$, $\delta(x, A)\leq \delta(x, A^\varepsilon)+ \varepsilon$, where $A^\varepsilon=\{x\in X|\ \delta(x,A) \leq\varepsilon\}$. \end{enumerate} It is easily seen that, in the presence of (A1)--(A3),   (A4') is equivalent to (A4).

A contraction $f: (X,\delta_X)\lra (Y,\delta_Y)$ between approach spaces is a map $f: X\lra Y$ such that $\delta_X(x,A)\geq \delta_Y(f(x),f(A))$ for all $A\subseteq X$ and $x\in X$.  Approach spaces and contractions form a topological category (see \cite{AHS} for topological categories), denoted by  ${\sf App}$.

A metric space is an ordered set (or, a category) valued in the  closed category $\mathfrak{L}=([0,\infty]^{\rm op},+)$; an approach space is a topological space  valued in $\mathfrak{L}$. So, the relationship between approach spaces and metric spaces is analogous to that between topological spaces and ordered sets,  as emphasized in \cite{CHT,GH,Hof2011,Hof2013,LZ16}.

Let $2$ denote the  closed category $(\{0,1\},\min)$.
The map $\omega: 2\lra \mathfrak{L}$ sending $1$   to $0$   and  $0$   to $\infty$   and the map $\iota: \mathfrak{L}\lra 2$  sending   $0$   to $1$  and  all $x\in(0,\infty]$ to $0$  are both lax monoidal functors. So, they induce a pair of functors between the category {\sf Ord} of  ordered sets (as categories valued in $2$) and order-preserving maps and the category of metric spaces and non-expansive maps: \[\omega:{\sf Ord}\lra{\sf Met}\]
 and \[\iota:{\sf Met}\lra{\sf Ord}.\]
The functor $\omega$ maps an ordered set $(X,\leq)$ to the metric space $(X,\omega(\leq))$,  where \[\omega(\leq)(x,y)=\begin{cases}0, & x\leq y, \\ \infty, & \text{otherwise}. \end{cases}\] The functor $\iota$ maps a metric space $(X,d)$ to the ordered set $(X,\leq_d)$, where, \[x\leq_d y\iff d(x,y)=0.\] The functor $\omega:{\sf Ord}\lra{\sf Met}$ is full and faithful. Since, as order-preserving maps, $\omega: 2\lra \mathfrak{L}$ is left adjoint to $\iota: \mathfrak{L}\lra 2$,   the induced functors $\omega:{\sf Ord}\lra{\sf Met}$ and $\iota:{\sf Met}\lra{\sf Ord}$ form an adjunction with $\omega$ being the left adjoint.

The lax monoidal functors $\omega: 2\lra \mathfrak{L}$ and $\iota: \mathfrak{L}\lra 2$ also induce  an adjunction between the categories of topological spaces and approach spaces. Given a topological space $(X,\CT)$, the map  $\omega(\CT):X\times 2^X\lra[0,\infty]$, given by \[\omega(\CT)(x,A)=\begin{cases}0, & \text{if $x$ is in the closure of $A$}, \\ \infty, & \text{otherwise},\end{cases}\]
is an approach distance on $X$. The correspondence $(X,\CT)\mapsto(X,\omega(\CT))$ defines a full and faithful functor \[\omega:{\sf Top}\lra{\sf App}.\] Given an approach space $(X,\delta)$, the operator  on the powerset of $X$ given by \[x\in \overline{A}\iff \delta(x,A)=0 \]   is the closure operator for a topology, denoted by $\iota(\delta)$, on $X$. This process gives a functor \[\iota:{\sf App}\lra{\sf Top}\] that is   right adjoint to $\omega:{\sf Top}\lra{\sf App}$.  The topology $\iota(\delta)$ is called the  topological coreflection  of $\delta$  \cite{RL97}.

Note that we use the same symbol for both of the functors ${\sf Ord}\lra{\sf Met}$ and ${\sf Top}\lra{\sf App}$, because it is easily detected from the context which one is meant. Likewise, we use the same symbol for both of the functors ${\sf Met}\lra{\sf Ord}$ and ${\sf App}\lra{\sf Top}$.

A metric space $(X,d)$ is said to be separated if $x=y$ whenever $d(x,y)=d(y,x)=0$. The opposite $d^{\rm op}$ of a metric $d$ on $X$ is defined to be the metric given by $d^{\rm op}(x,y)=d(y,x)$.

\begin{exmp}[The Lawvere metric, \cite{Lawvere73}]  For all $a,b$ in $[0,\infty]$, the Lawvere distance, $d_L(a,b)$, from $a$ to $b$ is defined to be the truncated minus $b\ominus a$, i.e., \[d_L(a,b)=b\ominus a=\max\{
b-a, 0\},\]
where, we take by convention that $\infty-\infty=0$ and $\infty-a=\infty$ for all $a<\infty$. It is clear that $([0,\infty],d_L)$ is a separated, non-symmetric, metric space. The opposite of the Lawvere metric is denoted by $d_R$, i.e., $d_R(x,y)=x\ominus y$.
\end{exmp}

The approach space $\mathbb{P}$ in the following example plays an important role in the theory of approach spaces. This space is closely related to the metric $d_R$ on $[0,\infty]$ (see Example \ref{P=Sigma}).

\begin{exmp}\label{real approach} (\cite{RL97,Lowen15})  For all $x\in[0,\infty]$ and $A\subseteq[0,\infty]$, let
$$\delta_\mathbb{P}(x,A)= \left\{\begin{array}{ll} x\ominus\sup A,  & A\neq\emptyset,\\
\infty, &A=\emptyset.
\end{array}\right.$$ Then $\delta_\mathbb{P}$ is an approach  distance on $[0,\infty]$.   The space $([0,\infty],\delta_\mathbb{P})$ is denoted by $\mathbb{P}$.
\end{exmp}

As for topological spaces, approach spaces can be described in many ways \cite{RL97,Lowen15}. One of them we need   is the description by \emph{regular functions}.  A   regular function  of an approach space $(X,\delta)$ is a contraction $\phi:(X,\delta)\lra\mathbb{P}$, where $\mathbb{P}$ is the  approach space given in Example \ref{real approach}. Explicitly, a regular function of $(X,\delta)$ is a function $\phi:X\lra[0,\infty]$ such that \[\delta(x,A)\geq \phi(x)\ominus \sup \phi(A)\] for all $x\in X$ and all $A\subseteq X$.

Condition (A4) in the definition of approach spaces ensures that for each $A\subseteq X$, $\delta(-,A)$ is a regular function  of $(X,\delta)$.

\begin{thm} {\rm(\cite{RL97})} \label{regular functions} Let $(X,\delta)$ be an approach space. Then the set $\mathcal{R}X$ of regular functions of $(X,\delta)$   satisfies the following conditions:
\begin{enumerate}[label={\rm(R\arabic*)}] \setlength{\itemsep}{-2pt}
 \item  For each subset $\{\phi_i\}_{i\in I}$ of $\mathcal{R}X$,  $\sup_{i\in I}\phi_i\in \mathcal{R}X$.
\item  For all   $\phi,\psi\in\mathcal{R}X$,  $\min\{\phi,\psi\}\in \mathcal{R}X$.
\item  For all $\phi\in\mathcal{R}X$ and $r\in[0,\infty]$, both $\phi+r$ and $\phi\ominus r$ are in $\mathcal{R}X$.
\end{enumerate}

Conversely, suppose that $\mathcal{S}\subseteq [0,\infty]^X$ satisfies the conditions {\rm(R1)--(R3)}. Define a map $\delta: X\times 2^X\lra [0, \infty]$ by \begin{equation}\delta(x,A)=\sup\{\phi(x)\mid \phi\in\mathcal{S}~{\rm and}~  \phi(a)=0 ~{\rm for~all}~ a\in A\}.\end{equation}  Then $(X,\delta)$ is an approach space with   $\mathcal{S}$ being its set of regular functions. \end{thm}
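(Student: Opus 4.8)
The plan is to prove the two halves of the statement separately. For the direct implication I would verify (R1)--(R3) straight from the defining inequality $\delta(x,A)\geq\phi(x)\ominus\sup\phi(A)$ of a regular function. Condition (R1) comes from the elementary inequality $(\sup_i a_i)\ominus(\sup_i b_i)\leq\sup_i(a_i\ominus b_i)$ in $[0,\infty]$ together with $\sup\bigl((\sup_i\phi_i)(A)\bigr)=\sup_i\sup\phi_i(A)$; condition (R3) is cleanest to see by noting that $t\mapsto t+r$ and $t\mapsto t\ominus r$ are contractions $\mathbb{P}\to\mathbb{P}$, so post-composing a contraction $\phi\colon(X,\delta)\to\mathbb{P}$ with them again gives a regular function. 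For (R2) I would split the test set: with $m=\min\{\phi,\psi\}$ and $x$ fixed, say $m(x)=\phi(x)$, write $A=A_0\cup A_1$ where $A_0=\{a\in A:\phi(a)\leq\psi(a)\}$. Then $\sup m(A)=\max\{\sup\phi(A_0),\sup\psi(A_1)\}$ and, by (A3), $\delta(x,A)=\min\{\delta(x,A_0),\delta(x,A_1)\}$; a short case split on which of $\delta(x,A_0),\delta(x,A_1)$ realises this minimum, using the equivalent form $\phi(x)\leq\delta(x,A)+\sup\phi(A)$ of regularity, yields $m(x)\leq\delta(x,A)+\sup m(A)$.

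For the converse I would first check that the displayed formula does define an approach distance, pairing each of (A1)--(A4) with a closure property of $\mathcal{S}$: (R1) applied to the empty family gives the constant function $\mathbf{0}\in\mathcal{S}$, hence (A1); (R3) then gives every constant function, in particular $\mathbf{\infty}\in\mathcal{S}$, hence (A2); (A3) follows from (R2) since $\min$ distributes over $\sup$ in $[0,\infty]$; and (A4) uses (R3): if $\phi\in\mathcal{S}$ vanishes on $A$ and $r=\sup_{b\in B}\delta(b,A)$, then $\phi(b)\leq\delta(b,A)\leq r$ on $B$, so $\phi\ominus r\in\mathcal{S}$ vanishes on $B$ and therefore $\delta(x,B)+r\geq(\phi(x)\ominus r)+r\geq\phi(x)$; taking the supremum over all such $\phi$ gives $\delta(x,A)\leq\delta(x,B)+r$. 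The inclusion $\mathcal{S}\subseteq\mathcal{R}_\delta X$ is then immediate: for $\phi\in\mathcal{S}$ and any $A$, the function $\phi\ominus\sup\phi(A)$ lies in $\mathcal{S}$ by (R3) and vanishes on $A$, whence $\delta(x,A)\geq\phi(x)\ominus\sup\phi(A)$.

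The substantial part is the reverse inclusion $\mathcal{R}_\delta X\subseteq\mathcal{S}$, and this is the step I expect to be the genuine obstacle. The key preliminary observation is that $\delta(\cdot,A)\in\mathcal{S}$ for every $A\subseteq X$, since it is the supremum of the subfamily $\{\psi\in\mathcal{S}:\psi|_A=0\}$ of $\mathcal{S}$ and (R1) applies. Given a regular $\phi$ I would write it as a supremum of members of $\mathcal{S}$ lying below it: for $t>0$ and $n\geq1$, setting $B_k=\{x:\phi(x)\leq kt\}$, let
\[
g_{t,n}=\Bigl(\bigwedge_{k=1}^{n}\bigl(\delta(\cdot,B_k)+(k-1)t\bigr)\Bigr)\wedge\mathbf{nt},
\]
which lies in $\mathcal{S}$ by (R1), (R2) and (R3). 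A level-by-level inspection shows $g_{t,n}\leq\phi$: on $\{(k-1)t<\phi\leq kt\}$ one has $x\in B_k$, so $\delta(x,B_k)=0$ and $g_{t,n}(x)\leq(k-1)t<\phi(x)$, while on $\{\phi>nt\}$ the cap $\mathbf{nt}$ suffices. Conversely, regularity in the form $\delta(x,B_k)\geq\phi(x)\ominus\sup\phi(B_k)\geq\phi(x)\ominus kt$ forces every bracketed term, evaluated at a point $x$, to be at least $\phi(x)-t$ as soon as $nt\geq\phi(x)$; hence $\sup_{t,n}g_{t,n}=\phi$, so $\phi\in\mathcal{S}$ by (R1). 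The delicate point is precisely locating this approximating family: the naive candidates $\delta(\cdot,\{\phi\leq t\})\wedge\mathbf{t}$ do lie in $\mathcal{S}$ and below $\phi$, but their supremum recovers $\phi$ only up to a factor of $2$, so one genuinely needs the nested meet over the successive level sets $B_t,B_{2t},\dots,B_{nt}$ together with the limit $t\to0$ to close the gap.
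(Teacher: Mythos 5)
The paper does not prove this theorem at all: it is quoted from Lowen's book \cite{RL97}, so there is no in-paper argument to compare yours against; judged on its own, your proof is correct and complete in all essentials. The verifications of (R1)--(R3), of (A1)--(A4) for the distance defined from $\mathcal{S}$, and of $\mathcal{S}\subseteq\mathcal{R}_\delta X$ are all sound (in particular the case split for (R2) and the use of $\phi\ominus r$ for (A4) work exactly as you say, and reading (R1) as including the empty family, which yields the constants, is the intended reading -- without it the converse would fail for $\mathcal{S}=\varnothing$). The genuinely substantive step is, as you identified, $\mathcal{R}_\delta X\subseteq\mathcal{S}$, and your staircase approximation does close it: each $g_{t,n}\in\mathcal{S}$, $g_{t,n}\leq\phi$, and $\delta(x,B_k)\geq\phi(x)\ominus kt$ forces $g_{t,n}(x)\geq\phi(x)-t$ once $nt\geq\phi(x)$. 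The only point worth one extra line is the case $\phi(x)=\infty$, which your phrase ``as soon as $nt\geq\phi(x)$'' does not literally cover; but there every $\delta(x,B_k)=\infty$ (either $B_k=\varnothing$, or $\sup\phi(B_k)\leq kt<\infty$ and regularity gives $\delta(x,B_k)\geq\infty\ominus kt=\infty$), so $g_{t,n}(x)=nt\to\infty$ and the identity $\sup_{t,n}g_{t,n}=\phi$ still holds. Your closing remark about the factor-of-$2$ loss for the naive approximants $\min\{\delta(\cdot,\{\phi\leq t\}),t\}$ is also accurate, and your construction is essentially the standard one underlying Lowen's statement (quoted here as Proposition 2.3) that the functions $\delta(\cdot,A)$ form a subbasis for the regular functions.
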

We leave it to the reader to check that for each approach space $(X,\delta)$, the closed sets of its topological coreflection $\iota(\delta)$ are  given by $\{\phi^{-1}(0)\mid \phi\in \mathcal{R}X\}.$

Contractions between approach spaces can  be characterized in terms of regular functions.
\begin{prop}\label{contraction by regular frame} {\rm(\cite{RL97})} If $(X,\delta)$ and $(Y,\rho)$ are approach spaces and $f:X\lra Y$, then $f$ is a contraction if and only if for each $\phi\in\mathcal{R}Y$, $\phi\circ f \in\mathcal{R}X$. \end{prop}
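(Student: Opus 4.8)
The plan is to prove the two implications separately: the ``only if'' direction is essentially the fact that a composite of contractions is a contraction, while the ``if'' direction rests on the representation of the distance by its regular functions furnished by Theorem \ref{regular functions} together with (A4).

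For the ``only if'' part, suppose $f$ is a contraction and fix $\phi\in\mathcal{R}Y$. By definition $\phi$ is a contraction $(Y,\rho)\lra\mathbb{P}$, so $\phi\circ f$ is a composite $(X,\delta)\lra(Y,\rho)\lra\mathbb{P}$ of contractions and is therefore itself a contraction into $\mathbb{P}$, i.e. a member of $\mathcal{R}X$. Spelled out, for $x\in X$ and $A\subseteq X$ one chains
\[\delta(x,A)\ \geq\ \rho(f(x),f(A))\ \geq\ \phi(f(x))\ominus\sup\phi(f(A))\ =\ (\phi\circ f)(x)\ominus\sup(\phi\circ f)(A),\]
the first inequality by contractivity of $f$, the second since $\phi\in\mathcal{R}Y$. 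There is no obstacle here; it is a one-line computation with $\ominus$.

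For the ``if'' part, I would first record the standard consequence of Theorem \ref{regular functions} that for any approach space $(X,\delta)$ one has $\delta(x,A)=\sup\{\psi(x)\mid\psi\in\mathcal{R}X,\ \psi|_A=0\}$ for all $x\in X$, $A\subseteq X$: the inequality $\geq$ holds because any such $\psi$ satisfies $\psi(x)=\psi(x)\ominus\sup\psi(A)\leq\delta(x,A)$, and the inequality $\leq$ holds because $\delta(-,A)$ is itself a regular function vanishing on $A$ (this is exactly (A4), as noted just before Theorem \ref{regular functions}). Now assume $\phi\circ f\in\mathcal{R}X$ for every $\phi\in\mathcal{R}Y$, and fix $x\in X$, $A\subseteq X$. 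Applying the displayed formula in $(Y,\rho)$ at the point $f(x)$ and the set $f(A)$, it suffices to show $\delta(x,A)\geq\psi(f(x))$ for every $\psi\in\mathcal{R}Y$ with $\psi|_{f(A)}=0$. For such a $\psi$, the function $\psi\circ f$ lies in $\mathcal{R}X$ by hypothesis and vanishes on $A$ (as $\psi(f(a))=0$ for $a\in A$), so the $\geq$ half of the displayed formula in $(X,\delta)$ gives $\delta(x,A)\geq(\psi\circ f)(x)=\psi(f(x))$. Taking the supremum over all admissible $\psi$ yields $\delta(x,A)\geq\rho(f(x),f(A))$, i.e. $f$ is a contraction.

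The main point requiring care is the representation formula $\delta(x,A)=\sup\{\psi(x)\mid\psi\in\mathcal{R}X,\ \psi|_A=0\}$, but this is immediate from (A4) and Theorem \ref{regular functions}; beyond that the argument is just bookkeeping with the order on $[0,\infty]$ and the truncated subtraction $\ominus$, so I do not expect a genuine obstacle.
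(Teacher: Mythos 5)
Your proof is correct. The paper gives no proof of this proposition --- it is quoted from \cite{RL97} --- and your argument is precisely the standard one implicit in the paper's setup: the ``only if'' half is the closure of contractions under composition with $\phi:(Y,\rho)\lra\mathbb{P}$, and the ``if'' half uses the representation $\delta(x,A)=\sup\{\psi(x)\mid\psi\in\mathcal{R}X,\ \psi|_A=0\}$, which follows from the remark before Theorem \ref{regular functions} that $\delta(-,A)$ is a regular function vanishing on $A$; so there is nothing to add.
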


Let $(X,\delta)$ be an approach space. A subset $\mathcal{B}\subseteq[0,\infty]^X$ is a \emph{subbasis} for the regular functions of $(X,\delta)$ if $\mathcal{R}X$ is the smallest set that contains $\mathcal{B}$ and satisfies {\rm(R1)--(R3)}.

\begin{prop}{\rm(\cite{RL97})} \label{initial source} Let $(X,\delta)$ be an approach space.  The source \[\{\delta(-,A): (X,\delta)\lra \mathbb{P}\}_{A\in 2^X}\] is initial.
Hence $\{\delta(-,A)\mid A\subseteq X\}$ is a  subbasis for the regular functions of $(X,\delta)$.
\end{prop}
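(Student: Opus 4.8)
I would argue entirely in terms of regular functions, using Theorem~\ref{regular functions} and Proposition~\ref{contraction by regular frame}. Two facts about an arbitrary approach space $(X,\delta)$ will be used throughout: $\delta(a,A)=0$ whenever $a\in A$ (immediate from (A1) and (A3)), and $\delta(-,A)\in\mathcal{R}X$ for every $A\subseteq X$ (the observation recorded just after condition (A4)). The plan is to establish the ``hence'' clause first --- that $\{\delta(-,A)\mid A\subseteq X\}$ is a subbasis for $\mathcal{R}X$ --- and then read off initiality from it.

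\emph{Step 1 (subbasis).} Let $\mathcal{S}$ be the smallest subset of $[0,\infty]^X$ containing $\{\delta(-,A)\mid A\subseteq X\}$ and closed under (R1)--(R3). Since $\mathcal{R}X$ contains every $\delta(-,A)$ and satisfies (R1)--(R3) by Theorem~\ref{regular functions}, we get $\mathcal{S}\subseteq\mathcal{R}X$, and it remains to show the reverse inclusion. By Theorem~\ref{regular functions}, $\mathcal{S}$ is the set of regular functions of the approach distance $\delta'$ on $X$ given by $\delta'(x,A)=\sup\{\psi(x)\mid\psi\in\mathcal{S},\ \psi(a)=0\ \text{for all}\ a\in A\}$. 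On one hand, every $\psi\in\mathcal{S}\subseteq\mathcal{R}X$ is a regular function of $(X,\delta)$, so $\delta(x,A)\ge\psi(x)\ominus\sup\psi(A)=\psi(x)$ whenever $\psi$ vanishes on $A$; taking the supremum over such $\psi$ gives $\delta\ge\delta'$. On the other hand, for each $A$ the function $\delta(-,A)$ lies in $\mathcal{S}$ and vanishes on $A$, so $\delta'(x,A)\ge\delta(-,A)(x)=\delta(x,A)$. Hence $\delta'=\delta$, and therefore $\mathcal{R}X=\mathcal{R}(X,\delta')=\mathcal{S}$.

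\emph{Step 2 (initiality).} Let $(Y,\rho)$ be an approach space and $g\colon Y\to X$ a map such that $\delta(-,A)\circ g\colon(Y,\rho)\lra\mathbb{P}$ is a contraction for every $A\subseteq X$; I would show that $g\colon(Y,\rho)\lra(X,\delta)$ is itself a contraction. By Proposition~\ref{contraction by regular frame} it suffices to check that $\psi\circ g\in\mathcal{R}Y$ for every $\psi\in\mathcal{R}X$. Each $\delta(-,A)\circ g$, being a contraction into $\mathbb{P}$, is by definition a regular function of $(Y,\rho)$, hence lies in $\mathcal{R}Y$. By Step~1, $\psi$ is obtained from the functions $\delta(-,A)$ by iterating the operations appearing in (R1)--(R3); these operations --- suprema of families, binary minima, and adding or truncated-subtracting a constant --- commute with precomposition by $g$, and $\mathcal{R}Y$ is closed under all of them by Theorem~\ref{regular functions}. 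Thus $\psi\circ g$ is obtained in exactly the same way from the functions $\delta(-,A)\circ g\in\mathcal{R}Y$, so $\psi\circ g\in\mathcal{R}Y$. Hence $g$ is a contraction, which is precisely the initiality of the source.

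I expect no genuine obstacle here: the proposition repackages Theorem~\ref{regular functions} and Proposition~\ref{contraction by regular frame}. The one point worth flagging is that condition (A4) is exactly what makes each $\delta(-,A)$ a regular function, so that it is available both as a generator and --- decisively in Step~1 --- as a regular function vanishing on $A$, which is what forces $\delta'\ge\delta$; everything else is routine bookkeeping, notably the stability of the closure operations defining $\mathcal{R}$-sets under precomposition.
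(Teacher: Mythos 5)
Your proposal is correct, but note that the paper does not prove this proposition at all: it is quoted from Lowen's book \cite{RL97}, so there is no in-paper argument to match. What you supply is a legitimate self-contained derivation from Theorem~\ref{regular functions} and Proposition~\ref{contraction by regular frame}, and you run the logic in the opposite direction from the statement's ``hence'': you first establish the subbasis claim (by applying the converse half of Theorem~\ref{regular functions} to the generated set $\mathcal{S}$ and checking $\delta'=\delta$ via the two inequalities, which is sound, including the degenerate case $A=\emptyset$ where both sides are $\infty$), and then deduce initiality from it; the usual route in \cite{RL97} goes the other way, characterizing initial structures in ${\sf App}$ first and reading off the subbasis. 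Two small points of polish rather than gaps: in Step~2 the phrase ``obtained by iterating (R1)--(R3)'' is cleanest made rigorous by observing that $\mathcal{T}=\{\psi\in[0,\infty]^X\mid \psi\circ g\in\mathcal{R}Y\}$ contains all $\delta(-,A)$ and is closed under (R1)--(R3) (precomposition commutes with sups, minima, $+r$ and $\ominus r$), hence $\mathcal{T}\supseteq\mathcal{S}=\mathcal{R}X$, which avoids any appeal to a transfinite construction of $\mathcal{S}$; and for completeness one should record the trivial direction of initiality, namely that if $g$ is a contraction then each $\delta(-,A)\circ g$ is one, which is immediate since each $\delta(-,A)$ is a contraction into $\mathbb{P}$ by the remark following (A4).
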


\begin{cor}\label{subbasis of product} For each family of approach spaces $\{(X_i,\delta_i)\}_{i\in I}$,  \[\{\delta_i(-,A_i)\circ p_i\mid i\in I, A_i\subseteq X_i \}\]  is a subbasis for the regular functions of the product space $ \prod_{i\in I}(X_i,\delta_i)$, where $p_i$ denotes the   projection on the $i$th coordinate.    \end{cor}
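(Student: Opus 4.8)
The plan is to reduce the statement to the two facts already in hand: that the product carries the \emph{initial} approach structure with respect to the projections $p_i$, and that $\{\delta_i(-,A_i)\mid A_i\subseteq X_i\}$ is a subbasis for $\mathcal{R}X_i$ (Proposition \ref{initial source}). Write $(X,\delta)=\prod_{i\in I}(X_i,\delta_i)$, put $\mathcal{C}=\{\delta_i(-,A_i)\circ p_i\mid i\in I,\ A_i\subseteq X_i\}$, and let $\mathcal{S}$ be the smallest subset of $[0,\infty]^X$ that contains $\mathcal{C}$ and satisfies (R1)--(R3); by the definition of subbasis, it suffices to prove $\mathcal{S}=\mathcal{R}X$.

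One inclusion is immediate. Each projection $p_i\colon(X,\delta)\lra(X_i,\delta_i)$ is a contraction, and $\delta_i(-,A_i)\in\mathcal{R}X_i$ by (A4), so $\delta_i(-,A_i)\circ p_i\in\mathcal{R}X$ by Proposition \ref{contraction by regular frame}. Thus $\mathcal{C}\subseteq\mathcal{R}X$, and since $\mathcal{R}X$ satisfies (R1)--(R3) (Theorem \ref{regular functions}) we conclude $\mathcal{S}\subseteq\mathcal{R}X$.

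For the reverse inclusion I would invoke the converse half of Theorem \ref{regular functions}: as $\mathcal{S}$ satisfies (R1)--(R3), it is the set of regular functions of some approach distance $\delta'$ on the set $X$. The key step is to verify that every projection $p_i\colon(X,\delta')\lra(X_i,\delta_i)$ is a contraction. By Proposition \ref{contraction by regular frame} this reduces to showing $\phi\circ p_i\in\mathcal{S}$ for every $\phi\in\mathcal{R}X_i$, and here I would consider the set $\mathcal{T}_i=\{\phi\in[0,\infty]^{X_i}\mid \phi\circ p_i\in\mathcal{S}\}$. By construction $\mathcal{T}_i$ contains $\{\delta_i(-,A_i)\mid A_i\subseteq X_i\}$, and it satisfies (R1)--(R3) because precomposition with $p_i$ commutes with suprema, with binary minima, and with the operations $\psi\mapsto\psi+r$ and $\psi\mapsto\psi\ominus r$, while $\mathcal{S}$ is closed under all of these. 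Since $\{\delta_i(-,A_i)\}$ is a subbasis for $\mathcal{R}X_i$ (Proposition \ref{initial source}), it follows that $\mathcal{R}X_i\subseteq\mathcal{T}_i$, i.e.\ $\phi\circ p_i\in\mathcal{S}$ for all $\phi\in\mathcal{R}X_i$, as required.

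Once each $p_i\colon(X,\delta')\lra(X_i,\delta_i)$ is known to be a contraction, initiality of the product structure $\delta$ forces $\mathrm{id}_X\colon(X,\delta')\lra(X,\delta)$ to be a contraction, whence $\mathcal{R}X=\mathcal{R}(X,\delta)\subseteq\mathcal{R}(X,\delta')=\mathcal{S}$. Combined with the first inclusion this yields $\mathcal{S}=\mathcal{R}X$, which is exactly the assertion. I expect the only genuine subtlety to lie in the inclusion $\mathcal{R}X\subseteq\mathcal{S}$, where one must manufacture the auxiliary distance $\delta'$ from $\mathcal{S}$ via Theorem \ref{regular functions} and then play it against the universal property of the product; the commutation of the (R1)--(R3) operations with $(-)\circ p_i$ is routine bookkeeping.
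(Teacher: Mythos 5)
Your proposal is correct, and it follows essentially the route the paper intends for this corollary: combine Proposition \ref{initial source} (the per-factor subbasis/initiality statement) with the initiality of the product source in ${\sf App}$, your argument merely making explicit, via the auxiliary distance $\delta'$ from Theorem \ref{regular functions} and the sets $\mathcal{T}_i$, the details the paper leaves to the reader.
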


An order on a set generates many topologies, for instance, the Alexandroff topology, the Scott topology, the Lawson topology, and etc. So, one might expect   that a metric on a set $X$ will induce many approach distances. This is true. The first example is  the Alexandroff distance \cite{RL97,Windels}. This paper concerns the second one, the Scott distance. While the Alexandroff distance is a metric analogy of the Alexandroff topology,  the Scott distance is a metric analogy of the Scott topology.

Let $(X,d)$ be a metric space. A weight (a.k.a. a left module) \cite{KS05,Lawvere73,SV2005}  of $(X,d)$ is a function $\phi:X\lra[0,\infty]$ such that $\phi(x)\leq \phi(y)+d(x,y)$ for all $x,y\in X$. A coweight (a.k.a. a right module) of $(X,d)$ is a function $\psi:X\lra[0,\infty]$ such that $\psi(y)\leq \psi(x)+d(x,y)$ for all $x,y\in X$.
Said differently, a weight of $(X,d)$ is a non-expansive map $\phi:(X, d)\lra([0,\infty],d_R)$ and a coweight of $(X,d)$ is a non-expansive map $\psi:(X, d)\lra([0,\infty],d_L)$.

The set $\CP X$  of all weights of a metric space $(X,d)$   has the following properties: \begin{enumerate}[label={\rm(W\arabic*)}] \setlength{\itemsep}{-2pt} \item For each $x\in X$, $d(-,x)\in\CP X$. Such  weights are said to   be  representable. \item  For each family $\{\phi_i\}_{i\in I}$ of weights of $(X,d)$,  both $\inf_{i\in I}\phi_i$ and $\sup_{i\in I}\phi_i$ are in $\mathcal{P}X$.
\item  For all $\phi\in\mathcal{P}X$ and $r\in[0,\infty]$, both $\phi+r$ and $\phi\ominus r$ are in $\mathcal{P}X$.
\end{enumerate}
Therefore, $\CP X$ satisfies the conditions (R1)--(R3) in Theorem \ref{regular functions} and determines an approach distance on $X$ via \[\Gamma(d)(x,A)=\sup\{\phi(x)\mid \phi\in\CP X~{\rm and}~  \phi(a)=0 ~{\rm for~all}~ a\in A\}.\] It is easy to check that \[\Gamma(d)(x,A)=\left\{\begin{array}{ll}\infty, &A=\emptyset,\\ \inf\limits_{y\in A}d(x,y),& A\not=\emptyset. \end{array}\right.\]The distance $\Gamma(d)$ is called the  Alexandroff distance  on $(X,d)$ \cite{RL97,Windels}.
The correspondence $(X,d)\mapsto(X,\Gamma(d))$ defines a full and faithful functor \[\Gamma:{\sf Met}\lra {\sf App}.\]

The functor $\Gamma:{\sf Met}\lra {\sf App}$ has a right adjoint \[\Omega:{\sf App}\lra{\sf Met}  \]
which sends every approach space $(X,\delta)$ to the metric space $(X,\Omega(\delta))$ with  $\Omega(\delta)(x,y)= \delta(x,\{y\})$  \cite{RL97,Lowen15}. The metric $\Omega(\delta)$ is  called  the specialization metric  \cite{Windels} of $(X,\delta)$ because of its analogy to the specialization order of topological spaces, as shown in the commutative squares:
\[\bfig \square<700,500>[{\sf Ord}` {\sf Top}` {\sf Met}  `{\sf App}; \Gamma` \omega ` \omega ` \Gamma] \square(1500,0)/>`<-`<-`>/<700, 500>[{\sf Top} `{\sf Ord} `{\sf App}`{\sf Met} ;  \Omega` \iota`\iota`\Omega]
\efig\] where,  the functor  $\Gamma:{\sf Ord}\lra {\sf Top}$ sends each  ordered set $(X,\leq)$ to its Alexandroff topology and  $\Omega:{\sf Top}\lra {\sf Ord} $ sends each topological space to its specialization order.

For each metric space $(X,d)$, the topological coreflection of its Alexandroff distance is a natural topology for $(X,d)$, which is in fact the open ball topology on $(X,d)$. The open ball topology \cite{Goubault} on $(X,d)$ is the topology generated as a basis by the  open balls in $(X,d)$, where, for each $x\in X$ and $r>0$, a point $y\in X$ lies in the open ball $B(x,r)$ with center $x$ and radius $r$ if the distance $d(x,y)$ from $x$ to $y$ is less than $r$, i.e., \[B(x,r)=\{y\in X\mid d(x,y)<r\}.\]  A point $x$ belongs to the closure of a subset $A$ with respect to the open ball topology if and only if $\inf_{a\in A}d(x,a)=0$. So the topological coreflection of its Alexandroff distance is exactly the open ball topology on $(X,d)$ \cite{RL97,Windels}.

\begin{defn}
Let  $f:(X,d_X)\lra(Y,d_Y)$ and $g:(Y,d_Y)\lra(X,d_X)$ be non-expansive maps between  metric  spaces. We say that $f$ is   left adjoint to $g$ (or, $g$ is  right adjoint to $f$), if \[d_Y(f(x),y)= d_X(x,g(y))\] for all $x\in X$ and $y\in Y$. \end{defn}  Left and right adjoint non-expansive maps are a special case of left and right  adjoint functors between enriched categories \cite{Kelly,Lawvere73}, respectively.

For any $\phi,\psi\in\CP X$, let \begin{equation*}\label{metric on PX}\overline{d}(\phi,\psi)=\sup_{x\in X}d_L(\phi(x), \psi(x))= \inf\{r\mid\psi\leq\phi+ r\}. \end{equation*} Then $\overline{d}$ is a separated metric on $\CP X$.

\begin{lem}{\rm(Yoneda lemma, \cite{Lawvere73})} Let $(X,d)$ be a  metric space. Then $\overline{d}(d(-,x),\phi)=\phi(x)$ for all $x\in X$ and $\phi\in\CP X$. \end{lem}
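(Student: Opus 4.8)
The plan is to unwind the definition of $\overline{d}$ and reduce the claim to the two defining inequalities of a weight. Recall that $d(-,x)\in\CP X$ by (W1), so the expression $\overline{d}(d(-,x),\phi)$ is meaningful, and by the two formulas given for $\overline{d}$ we have
\[\overline{d}(d(-,x),\phi)=\sup_{y\in X}d_L(d(y,x),\phi(y))=\sup_{y\in X}\big(\phi(y)\ominus d(y,x)\big)=\inf\{r\in[0,\infty]\mid \phi(y)\leq d(y,x)+r~\text{for all}~y\in X\}.\]
So the statement amounts to showing that $\phi(x)$ is both a lower bound attained by the supremum and an admissible value of $r$ in the infimum.

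For the inequality $\overline{d}(d(-,x),\phi)\geq\phi(x)$ I would simply evaluate the supremum at $y=x$: since $d(x,x)=0$, the term $\phi(x)\ominus d(x,x)=\phi(x)$ occurs among those being supremized, so the supremum is at least $\phi(x)$. For the reverse inequality $\overline{d}(d(-,x),\phi)\leq\phi(x)$ I would invoke that $\phi$ is a weight: applying the defining condition $\phi(u)\leq\phi(v)+d(u,v)$ with $u=y$ and $v=x$ gives $\phi(y)\leq\phi(x)+d(y,x)$ for every $y\in X$, i.e.\ $\phi(y)\ominus d(y,x)\leq\phi(x)$; taking the supremum over $y$ yields the bound (equivalently, this says $r=\phi(x)$ is admissible in the infimum description). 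Combining the two inequalities gives $\overline{d}(d(-,x),\phi)=\phi(x)$.

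There is essentially no obstacle here: the proof is a direct translation of the order-theoretic triviality that a down-closed set $A$ contains $x$ if and only if it contains the principal down-set of $x$. The only point deserving a moment's attention is the arithmetic in $[0,\infty]$ under the stated conventions ($\infty-\infty=0$, $\infty-a=\infty$ for $a<\infty$), namely that $a\ominus b\leq c$ is equivalent to $a\leq b+c$; one should in particular check the degenerate cases where $\phi(y)$ or $d(y,x)$ equals $\infty$, but each of these is immediate from the conventions and from the weight inequality, so I would dispatch them in a single line rather than belabor them.
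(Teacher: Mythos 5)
Your proof is correct: the two inequalities you establish (evaluation at $y=x$ using $d(x,x)=0$, and the weight inequality $\phi(y)\leq\phi(x)+d(y,x)$ together with the equivalence $a\ominus b\leq c\iff a\leq b+c$ in $[0,\infty]$) are exactly the standard verification of the metric Yoneda lemma. The paper itself gives no proof, citing Lawvere, and your argument is the expected one, so there is nothing to compare beyond noting that your attention to the degenerate $\infty$ cases is the only point where care is genuinely needed and you handle it adequately.
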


Given a  metric space $(X,d)$, define \[\y:(X,d)\lra(\CP X,\overline{d})\] by $\y(x)=d(-,x)$  for all $x\in X$. Then $\y$ is an isometry by the Yoneda lemma, hence it is called the \emph{Yoneda embedding}.

Let $f:(X,d_X)\lra(Y,d_Y)$ be a non-expansive map between metric spaces. If $\phi$ is a weight of $(X,d_X)$ then the map $$\overline{f}(\phi):Y\lra[0,\infty], \quad  \overline{f}(\phi)(y) =\inf_{x\in X}(\phi(x)+d_Y(y,f(x)))$$ is a weight of $(Y,d_Y)$.  If $\psi$ is a weight   of $(Y,d_Y)$  then $\psi\circ f$ is a weight   of $(X,d_X)$. Similarly, if $\varphi$ is a  coweight  of $(Y,d_Y)$,  then $\varphi\circ f$ is a  coweight  of $(X,d_X)$;   if $\xi$ is a coweight of $(X,d_X)$ then the map $$\overline{f}(\xi):Y\lra[0,\infty], \quad  \overline{f}(\xi)(y) =\inf_{x\in X}(\xi(x)+d_Y(f(x),y))$$ is a coweight of $(Y,d_Y)$.

The following lemma is   a special case of Kan extensions  in (enriched) category theory, see e.g. \cite{Kelly,Lawvere73}.

\begin{lem} \label{left kan} Let $f:(X,d_X)\lra(Y,d_Y)$ be a non-expansive map between metric spaces. Then  $\overline{f}:(\CP X,\overline{d_X})\lra(\CP Y, \overline{d_Y})$ is left adjoint to $(-)\circ f:(\CP Y, \overline{d_Y})\lra(\CP X,\overline{d_X})$.
 \end{lem}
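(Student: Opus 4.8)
The plan is to verify the defining identity of an adjunction of non-expansive maps directly, namely that
\[\overline{d_Y}(\overline{f}(\phi),\psi)=\overline{d_X}(\phi,\psi\circ f)\]
for all $\phi\in\CP X$ and all $\psi\in\CP Y$. That $\overline{f}(\phi)$ is a weight of $(Y,d_Y)$ and $\psi\circ f$ a weight of $(X,d_X)$ has already been noted in the discussion preceding the lemma, so both sides are well defined; non-expansiveness of $\overline{f}$ and of $(-)\circ f$ is routine (for the latter, $\psi'\leq\psi+r$ forces $\psi'\circ f\leq\psi\circ f+r$) and in any case follows from the identity above once it is established. The main move is to avoid computing the two suprema and instead compare them through the alternative description $\overline{d}(\alpha,\beta)=\inf\{r\mid\beta\leq\alpha+r\}$: I will show that for each fixed $r\in[0,\infty]$ the left-hand side is $\leq r$ exactly when the right-hand side is, and conclude equality by letting $r$ range.

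First I would unwind the left-hand side. The inequality $\overline{d_Y}(\overline{f}(\phi),\psi)\leq r$ says $\psi\leq\overline{f}(\phi)+r$, that is, $\psi(y)\leq\inf_{x\in X}\bigl(\phi(x)+d_Y(y,f(x))\bigr)+r$ for every $y\in Y$; pulling the infimum outside, this is precisely the family of scalar inequalities
\[\psi(y)\leq\phi(x)+d_Y(y,f(x))+r\qquad(x\in X,\ y\in Y).\]
On the other hand, $\overline{d_X}(\phi,\psi\circ f)\leq r$ says $\psi\circ f\leq\phi+r$, i.e. $\psi(f(x))\leq\phi(x)+r$ for all $x\in X$. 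The first family clearly implies the second: specialize $y=f(x)$ and use $d_Y(f(x),f(x))=0$. For the converse I would use the single defining inequality of the weight $\psi$, namely $\psi(y)\leq\psi(y')+d_Y(y,y')$, taken with $y'=f(x)$: assuming $\psi(f(x))\leq\phi(x)+r$ we get $\psi(y)\leq\psi(f(x))+d_Y(y,f(x))\leq\phi(x)+d_Y(y,f(x))+r$, which is the first family. This gives the equivalence, hence the identity.

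There is no serious obstacle here: the statement is pure bookkeeping once the two metrics on weight spaces are written in the form $\inf\{r\mid\beta\leq\alpha+r\}$, and the only substantive ingredient is the weight inequality for $\psi$, invoked exactly once for the nontrivial direction. One could also deduce the lemma abstractly from the theory of Kan extensions along a functor enriched in $\mathfrak{L}=([0,\infty]^{\rm op},+)$ --- $\overline{f}$ being the left Kan extension of $\y_Y\circ f$ along $\y_X$ and $(-)\circ f$ the corresponding restriction --- but the computation above is short enough that I would present it in full and keep the categorical statement only as a parenthetical remark.
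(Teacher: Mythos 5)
Your argument is correct. The only thing to note is that the paper does not actually prove this lemma: it simply records it as a special case of (enriched) Kan extensions and points to Kelly and Lawvere, so your elementary verification supplies what the paper delegates to the literature. Your computation is sound: with $\overline{d}(\alpha,\beta)=\inf\{r\mid\beta\leq\alpha+r\}$, the condition $\overline{d_Y}(\overline{f}(\phi),\psi)\leq r$ unwinds to $\psi(y)\leq\phi(x)+d_Y(y,f(x))+r$ for all $x\in X$, $y\in Y$, and $\overline{d_X}(\phi,\psi\circ f)\leq r$ to $\psi(f(x))\leq\phi(x)+r$ for all $x\in X$; specializing $y=f(x)$ gives one implication, and the weight inequality $\psi(y)\leq\psi(f(x))+d_Y(y,f(x))$ gives the other, so the two sets of admissible $r$ coincide and the adjunction identity $\overline{d_Y}(\overline{f}(\phi),\psi)=\overline{d_X}(\phi,\psi\circ f)$ follows (note these sets are upper intervals containing their infimum, so comparing them for each $r$ does yield equality of the distances). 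Your parenthetical claim that non-expansiveness of $\overline{f}$ and $(-)\circ f$ also follows from the identity is likewise fine: taking $\psi=\overline{f}(\phi')$ gives $\overline{d_Y}(\overline{f}(\phi),\overline{f}(\phi'))=\overline{d_X}(\phi,\overline{f}(\phi')\circ f)\leq\overline{d_X}(\phi,\phi')+\overline{d_X}(\phi',\overline{f}(\phi')\circ f)=\overline{d_X}(\phi,\phi')$, and dually for restriction. What the paper's categorical shortcut buys is generality and brevity (the statement is literally left Kan extension along $f$ versus restriction, valid over any quantale); what your proof buys is self-containment and an explicit check in the $[0,\infty]$-valued setting, which is a reasonable trade for a paper aimed partly at readers in approach theory rather than enriched category theory.
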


\section{The Scott distance on metric spaces}

A subset of an ordered set $(X,\leq)$ is Scott closed if it is a lower set and is closed under joins of directed subsets. Scott closed sets can also be characterized as lower subsets that are closed under least eventual upper bounds of eventual monotone nets. The Scott topology on $(X,\leq)$ is  the topology with Scott closed sets acting as the family of closed sets  \cite{domains}. Scott distance on metric spaces is an analogy of  Scott topology on ordered sets.  To our knowledge, in 2000, Windels \cite{Windels} made the first attempt to find such an analogy, resulting in the notion of Scott distance  for algebraic metric spaces. In this section, we present a postulation of this notion for a general metric space. In our approach, forward Cauchy nets take the role of eventual monotone nets,  Yoneda limits take the role of least eventual upper bounds, flat weights take the role of directed subsets, colimits of flat weights take the role of joins of directed subsets, and Scott weights that of Scott closed sets.  In the next section, we shall see that for algebraic metric spaces, the postulation given here coincides with the one of Windels.

A net $\{x_i\}_i$ in a metric space $(X,d)$ is forward Cauchy  \cite{BvBR1998,Wagner97} if $$\inf_i\sup_{k\geq j\geq i}d(x_j,x_k)=0.$$

Non-expansive maps  clearly preserve forward Cauchy nets. This fact will be used later.

\begin{defn}(\cite{BvBR1998,Wagner97})
Let $\{x_i\}_i$  be a forward Cauchy net in a metric space $(X,d)$. An element   $x\in X$  is a Yoneda limit\footnote{From the viewpoint of category theory,  \emph{Yoneda colimit} (\emph{Yoneda cocomplete}, resp.) will be a more appropriate terminology than \emph{Yoneda limit} (\emph{Yoneda complete}, resp.), because it is actually a colimit (cocomplete with respect to certain class of weights, resp.), see Proposition \ref{Yoneda limit=colimit} below. The reason for choosing   \emph{Yoneda limit} is to keep with the tradition in domain theory \cite{domains,Goubault}.  } (a.k.a. liminf) of $\{x_i\}_i$ if  for all $y\in X$, \[d(x,y)= \inf_i\sup_{j\geq i}d(x_j,y).\] \end{defn}

Yoneda limits are not necessarily unique. However, if both $x$ and $y$ are   Yoneda limit  of a  net $\{x_i\}_i$, then $d(x,y)=d(y,x)=0$. So, Yoneda limits in a separated metric space are unique.

\begin{defn} (\cite{BvBR1998,Wagner97}) A metric space  is Yoneda complete if each forward Cauchy net   has a Yoneda limit. \end{defn}

\begin{exmp}(\cite{Goubault}) \label{d_L} Both $([0,\infty],d_L)$ and $([0,\infty],d_R)$ are Yoneda complete.

If $\{x_i\}_i$ is a forward Cauchy net in $([0,\infty],d_L)$, then  $\{x_i\}$ is  eventually either a  constant net with value $\infty$ or  a Cauchy net of real numbers in the usual sense. In the first case, $\infty$ is a Yoneda limit of $\{x_i\}_i$; in the second case, the limit of the Cauchy net $\{x_i\}_i$ is a Yoneda limit of $\{x_i\}_i$.  Thus, $([0,\infty],d_L)$ is Yoneda complete.

If $\{x_i\}_i$ is a forward Cauchy net in $([0,\infty],d_R)$, then  $\{x_i\}_i$ converges in the usual sense (the limit can be $\infty$) and its limit is a Yoneda limit of $\{x_i\}_i$.  Thus, $([0,\infty],d_R)$ is Yoneda complete. \end{exmp}

For a forward Cauchy net $\{x_i\}_i$ in a metric space  $(X,d)$, it is clear that for each $y\in X$, $\{d(x_i,y)\}_i$ is a forward Cauchy net in $([0,\infty],d_R)$. So, $x$ is a Yoneda limit of $\{x_i\}_i$ if and only if for all $y\in X$, the net $\{d(x_i,y)\}_i$ converges to $d(x,y)$ (in the usual sense). This fact will be very useful.

The following important example of Yoneda complete metric spaces is contained in \cite[Proposition 7.14]{SV2005}, it is also  a special case of \cite[Theorem 3.1]{Wagner97}.
\begin{exmp}  \label{Yoneda limits in PX} For a metric space $(X,d)$, every forward Cauchy net $\{\phi_i\}_i$   in  $(\CP X,\overline{d})$ has a Yoneda limit, given by  $\phi(x)=\inf_i\sup_{j\geq i}\phi_j(x)$.\end{exmp}

A non-expansive map $f:(X,d_X)\lra(Y,d_Y)$ is  Yoneda continuous  if $f$ preserves Yoneda limits in the sense that if $x$ is a Yoneda limit of a forward Cauchy net $\{x_i\}_i$ then $f(x)$ is a Yoneda limit of $\{f(x_i)\}_i$. The category of  metric spaces and Yoneda continuous maps is denoted  by \[{\sf Met}^\ua.\] The full subcategory of ${\sf Met}^\ua$ consisting of Yoneda complete and separated metric spaces is a metric counterpart of the category of directed complete partially ordered sets in domain theory.

\begin{defn}  Let $(X,d)$ be  a metric space. A weight $\phi$ of $(X,d)$ is   a  Scott weight if for every forward Cauchy net $\{x_i\}_i$ of $(X,d)$ and every Yoneda limit $x$ of $\{x_i\}_i$,  \[\inf_i\sup_{j\geq i}\phi(x_j)\geq \phi(x).\]  \end{defn}

Scott weights are introduced in Wagner \cite{Wagner97} under the name \emph{Scott closed $\mathfrak{L}$-functors} from $(X,d^{\rm op})$ to $([0,\infty],d_L)$, where $\mathfrak{L}$ denotes Lawvere's quantale $([0,\infty]^{\rm op},+)$.

Let $\phi$ be a  weight and $\{x_i\}_i$ be a forward Cauchy net of a metric space $(X,d)$. If $x$ is a Yoneda limit of $\{x_i\}_i$, then \[\inf_i\sup_{j\geq i}d(x_j,x)=d(x,x)=0,\] hence \[\phi(x)=\inf_i\sup_{j\geq i}(d(x_j,x)+\phi(x))\geq \inf_i\sup_{j\geq i}\phi(x_j).\]
Therefore, the inequality in the definition of Scott weights is actually an equality. Furthermore, since $\{\phi(x_i)\}_i$ is a forward Cauchy net in $([0,\infty],d_R)$,  it  converges to a real number or infinity  in the usual sense, thus, \begin{equation*}\label{sup=inf}\inf_i\sup_{j\geq i}\phi(x_j) = \sup_i\inf_{j\geq i}\phi(x_j).\end{equation*} This proves the following
\begin{prop}\label{order convergence} For a  weight $\phi$ of a metric space $(X,d)$, the following are equivalent: \begin{enumerate}[label={\rm(\arabic*)}] \setlength{\itemsep}{-2pt} \item $\phi$ is   a  Scott weight. \item  For every forward Cauchy net $\{x_i\}_i$  and every Yoneda limit $x$ of $\{x_i\}_i$,  $\sup_i\inf_{j\geq i}\phi(x_j)\geq \phi(x)$. \item $\phi:(X,d)\lra([0,\infty],d_R)$ is Yoneda continuous. \end{enumerate}\end{prop}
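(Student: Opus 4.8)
The plan is to funnel all three conditions through one observation: for a weight $\phi$ and a forward Cauchy net $\{x_i\}_i$ with Yoneda limit $x$, the associated net $\{\phi(x_i)\}_i$ of (extended) reals always has a well-behaved limit, and each of (1), (2), (3) simply asserts that this limit equals $\phi(x)$. So the first step is to record an inequality valid for \emph{every} weight, with no extra hypothesis: since $\phi(x_j)\leq\phi(x)+d(x_j,x)$ and, $x$ being a Yoneda limit, $\inf_i\sup_{j\geq i}d(x_j,x)=d(x,x)=0$, one obtains $\inf_i\sup_{j\geq i}\phi(x_j)\leq\phi(x)$. This is the computation already displayed just before the statement, and it shows that the defining inequality of a Scott weight is automatically an equality.

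Next I would use the fact that a weight is, by definition, a non-expansive map $\phi:(X,d)\lra([0,\infty],d_R)$. Since non-expansive maps preserve forward Cauchy nets, $\{\phi(x_i)\}_i$ is a forward Cauchy net in $([0,\infty],d_R)$, and by Example \ref{d_L} it converges in the usual sense. Because $d_R$ is separated (being the opposite of $d_L$), Yoneda limits in $([0,\infty],d_R)$ are unique, so the Yoneda limit of $\{\phi(x_i)\}_i$ is exactly this ordinary limit, which in turn equals both $\inf_i\sup_{j\geq i}\phi(x_j)$ and $\sup_i\inf_{j\geq i}\phi(x_j)$ (for a convergent net of extended reals, ordinary limit $=\limsup=\liminf$).

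The equivalences then fall out. For (1)$\Leftrightarrow$(2): by the first step, (1) asserts $\phi(x)=\inf_i\sup_{j\geq i}\phi(x_j)$, and substituting $\sup_i\inf_{j\geq i}\phi(x_j)$ for $\inf_i\sup_{j\geq i}\phi(x_j)$ via the second step rewrites this as $\phi(x)\leq\sup_i\inf_{j\geq i}\phi(x_j)$, which is (2); conversely (2), combined with $\sup_i\inf_{j\geq i}\phi(x_j)=\inf_i\sup_{j\geq i}\phi(x_j)$, gives back (1). For (1)$\Leftrightarrow$(3): by definition $\phi$ is Yoneda continuous precisely when $\phi(x)$ is a Yoneda limit of $\{\phi(x_i)\}_i$ whenever $x$ is a Yoneda limit of $\{x_i\}_i$; since the second step identifies the unique Yoneda limit of $\{\phi(x_i)\}_i$ as $\inf_i\sup_{j\geq i}\phi(x_j)$, this amounts to the equality $\phi(x)=\inf_i\sup_{j\geq i}\phi(x_j)$, which by the first step is equivalent to (1).

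There is no genuine obstacle here; once Example \ref{d_L} is invoked the argument is pure bookkeeping. The only point deserving a line of care is the identification, in the second step, of the Yoneda limit of $\{\phi(x_i)\}_i$ in $([0,\infty],d_R)$ with the explicit quantity $\inf_i\sup_{j\geq i}\phi(x_j)$; this rests on the concrete description of Yoneda limits in $([0,\infty],d_R)$ as ordinary limits of (eventually) Cauchy real nets, together with the elementary identity $\limsup=\lim$ for convergent nets of extended reals.
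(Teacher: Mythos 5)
Your proposal is correct and follows essentially the same route as the paper: the paper's proof is exactly the two observations you record, namely that the weight inequality together with $\inf_i\sup_{j\geq i}d(x_j,x)=0$ forces $\inf_i\sup_{j\geq i}\phi(x_j)\leq\phi(x)$ (so the Scott inequality is an equality), and that $\{\phi(x_i)\}_i$ is forward Cauchy in $([0,\infty],d_R)$, hence convergent in the usual sense, giving $\inf_i\sup_{j\geq i}\phi(x_j)=\sup_i\inf_{j\geq i}\phi(x_j)$ and identifying Yoneda limits in $([0,\infty],d_R)$ with ordinary limits. Your explicit bookkeeping for (1)$\Leftrightarrow$(3) just spells out what the paper leaves implicit via Example \ref{d_L} and the remark preceding the proposition.
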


\begin{prop}Let $(X,d)$ be  a metric space. Then
\begin{enumerate}[label={\rm(\arabic*)}] \setlength{\itemsep}{-2pt}
\item Every representable weight is  a Scott weight.
\item   For each family $\{\phi_i\}_i$ of Scott weights of $(X,d)$,   $\sup_i\phi_i$ is a Scott weight.
\item  For all Scott weights $\phi_1$ and $\phi_2$   of $(X,d)$,   $\min\{\phi_1,\phi_2\}$ is a Scott weight.
\item  For all Scott weight $\phi$ of $(X,d)$ and all $r \in[0,\infty]$,  both $\phi\ominus r$ and $\phi+r$ are Scott weights of $(X,d)$.
\end{enumerate}\end{prop}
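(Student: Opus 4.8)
The plan is to reduce all four clauses to the reformulation of the Scott weight condition supplied by Proposition \ref{order convergence}. Recall that a weight $\phi$ of $(X,d)$ is a Scott weight exactly when $\inf_i\sup_{j\geq i}\phi(x_j)\geq\phi(x)$ for every forward Cauchy net $\{x_i\}_i$ with Yoneda limit $x$; and, as observed just before that proposition, since $\{\phi(x_i)\}_i$ is forward Cauchy in $([0,\infty],d_R)$ it converges in the usual topology of $[0,\infty]$, so that $\inf_i\sup_{j\geq i}\phi(x_j)$ is in fact the ordinary limit $\lim_i\phi(x_i)$, and the Scott weight condition is equivalent to the equality $\lim_i\phi(x_i)=\phi(x)$. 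In each clause the new function is a weight by (W1)--(W3), so only this limit identity has to be checked, and it will follow from elementary properties of the operations $\sup$, $\min$, $(-)+r$, $(-)\ominus r$ on $[0,\infty]$.

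First I would handle (1): if $\phi=d(-,a)$, the identity $\inf_i\sup_{j\geq i}d(x_j,a)=d(x,a)$ is precisely the defining property of the Yoneda limit $x$. Next (3): $\min\{\phi_1,\phi_2\}$ is a weight by (W2), and for a forward Cauchy net $\{x_i\}_i$ with Yoneda limit $x$ the nets $\{\phi_1(x_i)\}_i$ and $\{\phi_2(x_i)\}_i$ converge to $\phi_1(x)$ and $\phi_2(x)$; since $\min\colon[0,\infty]\times[0,\infty]\lra[0,\infty]$ is continuous for the usual topology, $\{\min\{\phi_1(x_i),\phi_2(x_i)\}\}_i$ converges to $\min\{\phi_1(x),\phi_2(x)\}$, which is the desired equality. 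Then (4): $\phi\ominus r$ and $\phi+r$ are weights by (W3), and $t\mapsto t+r$ and $t\mapsto t\ominus r$ are continuous self-maps of $[0,\infty]$ (with the usual conventions at $\infty$), so once more the limit passes through.

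The only clause calling for a little extra care is (2), since an arbitrary supremum of continuous functions need not be continuous, and so one cannot simply pass to limits as above. The point is that only one inequality is needed. With $\psi=\sup_i\phi_i$ (a weight by (W2)) and a forward Cauchy net $\{x_k\}_k$ with Yoneda limit $x$, for every index $i$ one has $\inf_k\sup_{l\geq k}\psi(x_l)\geq\inf_k\sup_{l\geq k}\phi_i(x_l)\geq\phi_i(x)$, the last inequality because $\phi_i$ is a Scott weight; taking the supremum over $i$ yields $\inf_k\sup_{l\geq k}\psi(x_l)\geq\sup_i\phi_i(x)=\psi(x)$, which is the Scott weight condition for $\psi$. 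I do not expect any genuine obstacle here: this one-sided estimate is the single spot that is not a mechanical consequence of continuity on $[0,\infty]$, and it too is routine.
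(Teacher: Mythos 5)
Your proposal is correct: the paper states this proposition without an explicit proof, and your verification is exactly the intended routine one, resting on properties (W1)--(W3) for the weight part and on the observation preceding Proposition \ref{order convergence} that for a Scott weight the defining inequality is an equality, i.e.\ $\{\phi(x_i)\}_i$ converges to $\phi(x)$ in the usual sense. Your handling of the only non-mechanical clause, (2), via the one-sided estimate $\inf_k\sup_{l\geq k}\psi(x_l)\geq\phi_i(x)$ for each $i$ and then taking suprema, is precisely the right argument.
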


Given a metric space $(X,d)$,  the collection of Scott weights of $(X,d)$ satisfies the conditions (R1)--(R3) in Theorem \ref{regular functions}, hence it determines an approach distance   $\si$  on $X$ via \begin{equation*}
\si(x,A)= \sup\{\phi(x)\mid \phi {\rm~is~a~Scott~weight~ and}~  \phi(a)=0 ~{\rm for~all}~ a\in A\}. \end{equation*} We call $\si$  the  Scott distance  of $(X,d)$ and write $\Sigma(X,d)$  for the approach space $(X,\si)$.

The following lemma shows that the metric information of $(X,d)$ is encoded in its Scott distance.
\begin{lem}\label{4.3} For each metric space $(X,d)$, $\Omega\Sigma(X,d)=(X,d)$.\end{lem}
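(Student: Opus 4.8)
The plan is to unwind the definition of $\Omega\Sigma(X,d)$, which by the description of $\Omega$ in the excerpt has underlying set $X$ and metric $\Omega(\si)(x,y)=\si(x,\{y\})$. So the task reduces to showing $\si(x,\{y\})=d(x,y)$ for all $x,y\in X$. Recall that $\si(x,\{y\})=\sup\{\phi(x)\mid \phi\text{ a Scott weight and }\phi(y)=0\}$.

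First I would prove the inequality $\si(x,\{y\})\geq d(x,y)$ by exhibiting a single Scott weight witnessing it. The natural candidate is the representable weight $\phi=d(-,y)$: it is a weight of $(X,d)$ by (W1), it is a Scott weight by part (1) of the Proposition preceding this lemma, and it satisfies $\phi(y)=d(y,y)=0$. Hence $\si(x,\{y\})\geq \phi(x)=d(x,y)$.

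Second I would prove the reverse inequality $\si(x,\{y\})\leq d(x,y)$. Here the key point is that every weight $\phi$ (Scott or not) satisfies the weight inequality $\phi(x)\leq \phi(y)+d(x,y)$; so if $\phi$ is any weight with $\phi(y)=0$, then $\phi(x)\leq d(x,y)$. Taking the supremum over all such Scott weights gives $\si(x,\{y\})\leq d(x,y)$. Combining the two inequalities yields $\si(x,\{y\})=d(x,y)$, i.e. $\Omega(\si)=d$, and therefore $\Omega\Sigma(X,d)=(X,d)$ as claimed.

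I do not expect any serious obstacle: both directions are immediate once one recalls that representable weights are Scott weights (the substantive input, already established) and that the defining inequality of a weight forces $\phi(x)\le d(x,y)$ whenever $\phi(y)=0$. The only thing to be careful about is to invoke the correct formula for $\Omega$ from the paragraph introducing the adjunction $\Gamma\dashv\Omega$, namely $\Omega(\delta)(x,y)=\delta(x,\{y\})$, so that the computation of $\si(x,\{y\})$ is indeed what needs to be done.
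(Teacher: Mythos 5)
Your argument is correct and is essentially the paper's own proof: the lower bound via the representable Scott weight $d(-,y)$ and the upper bound via the weight inequality $\phi(x)\leq d(x,y)+\phi(y)$ are exactly the two steps used there. Nothing is missing.
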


\begin{proof} Write $\sigma$ for the distance of $\Sigma(X,d)$. We need to show $\si(x,\{y\})=d(x,y)$ for all $x,y\in X$. Since $d(-,y)$ is a Scott weight and $d(y,y)=0$, it follows that $\si(x,\{y\})\geq d(x,y)$ by definition of $\sigma$. Next, for every Scott weight $\phi$ with $\phi(y)=0$, since $\phi(x)\leq d(x,y)+\phi(y)=d(x,y)$, one obtains that \[\si(x,\{y\})= \sup\{\phi(x)\mid \phi {\rm~is~a~Scott~weight~  and}~ \phi(y)=0\}\leq d(x,y).\] Therefore, $\si(x,\{y\})=d(x,y)$. \end{proof}

The following conclusion   is a metric analogy of the fact that a map between ordered sets preserves directed joins if and only if it is continuous with respect to   Scott topology.

\begin{thm}\label{contraction=Yoneda continuous} A map $f:(X,d_X)\lra(Y,d_Y)$ between metric spaces is   Yoneda continuous  if and only if $f:\Sigma(X,d_X)\lra\Sigma(Y,d_Y)$ is a contraction. \end{thm}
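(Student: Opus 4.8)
The plan is to route everything through the description of contractions by regular functions. By Theorem~\ref{regular functions} the regular functions of $\Sigma(Z,d)$ are exactly the Scott weights of $(Z,d)$, so by Proposition~\ref{contraction by regular frame} the map $f:\Sigma(X,d_X)\lra\Sigma(Y,d_Y)$ is a contraction if and only if $\psi\circ f$ is a Scott weight of $(X,d_X)$ for every Scott weight $\psi$ of $(Y,d_Y)$. I also record at the outset that each side of the asserted equivalence forces $f:(X,d_X)\lra(Y,d_Y)$ to be non-expansive: for a Yoneda continuous $f$ this holds by definition, while for a contraction $f:\Sigma(X,d_X)\lra\Sigma(Y,d_Y)$ it follows by applying the functor $\Omega$ together with Lemma~\ref{4.3} (which gives $\Omega\Sigma(X,d)=(X,d)$). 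In either case $f$ then carries forward Cauchy nets to forward Cauchy nets.

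For the forward implication, suppose $f$ is Yoneda continuous and let $\psi$ be a Scott weight of $(Y,d_Y)$. Then $\psi\circ f$ is a weight of $(X,d_X)$ (this is one of the facts about composites of weights recalled just before Lemma~\ref{left kan}). To check the Scott condition, take a forward Cauchy net $\{x_i\}_i$ in $(X,d_X)$ with Yoneda limit $x$; then $\{f(x_i)\}_i$ is forward Cauchy and $f(x)$ is a Yoneda limit of it, so the defining inequality of the Scott weight $\psi$ applied to $\{f(x_i)\}_i$ reads $\inf_i\sup_{j\geq i}(\psi\circ f)(x_j)=\inf_i\sup_{j\geq i}\psi(f(x_j))\geq\psi(f(x))=(\psi\circ f)(x)$. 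Hence $\psi\circ f$ is a Scott weight and, by the criterion above, $f$ is a contraction.

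For the converse, suppose $f:\Sigma(X,d_X)\lra\Sigma(Y,d_Y)$ is a contraction, so $f:(X,d_X)\lra(Y,d_Y)$ is non-expansive. Let $\{x_i\}_i$ be a forward Cauchy net in $(X,d_X)$ with Yoneda limit $x$; the goal is to show $d_Y(f(x),y)=\inf_i\sup_{j\geq i}d_Y(f(x_j),y)$ for every $y\in Y$, which says precisely that $f(x)$ is a Yoneda limit of $\{f(x_i)\}_i$. Fix $y$. The representable weight $d_Y(-,y)$ is a Scott weight of $(Y,d_Y)$, so by the contraction criterion $d_Y(f(-),y)=d_Y(-,y)\circ f$ is a Scott weight of $(X,d_X)$, equivalently a Yoneda continuous map $(X,d_X)\lra([0,\infty],d_R)$ by Proposition~\ref{order convergence}. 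Therefore $d_Y(f(x),y)$ is a Yoneda limit of the forward Cauchy net $\{d_Y(f(x_i),y)\}_i$ in $([0,\infty],d_R)$; since $([0,\infty],d_R)$ is separated and, by Example~\ref{d_L}, a forward Cauchy net there converges in the usual sense to its unique Yoneda limit $\inf_i\sup_{j\geq i}d_Y(f(x_j),y)$, the required equality follows, and $f$ is Yoneda continuous.

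I do not expect a serious obstacle: once the dictionary ``regular function of $\Sigma(X,d)$ $=$ Scott weight of $(X,d)$'' is in place, the argument is essentially bookkeeping. The one point worth isolating is that, in the converse direction, it suffices to test the contraction against the \emph{representable} Scott weights $d_Y(-,y)$ — no large family of Scott weights is needed — after which the coincidence of Yoneda limits with ordinary limits in $([0,\infty],d_R)$ does the rest. A minor thing not to overlook is that non-expansiveness of $f$ must first be extracted from the contraction hypothesis (via $\Omega$ and Lemma~\ref{4.3}) before one may even speak of $\{f(x_i)\}_i$ being forward Cauchy.
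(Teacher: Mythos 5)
Your proposal is correct and follows essentially the same route as the paper: the forward direction reduces to the fact that composing a Scott weight (i.e., a Yoneda continuous map into $([0,\infty],d_R)$) with a Yoneda continuous map again yields a Scott weight, and the converse extracts non-expansiveness via Lemma~\ref{4.3} and then tests the contraction hypothesis against the representable Scott weights $d_Y(-,y)$, exactly as in the paper. The only cosmetic difference is in the inequality $\inf_i\sup_{j\geq i}d_Y(f(x_j),y)\leq d_Y(f(x),y)$: the paper proves it by a direct triangle-inequality computation, whereas you obtain it by identifying Yoneda limits in the separated space $([0,\infty],d_R)$ with ordinary limits.
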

\begin{proof}
\textbf{Necessity}. It suffices to show that for each Scott weight $\phi$ of $(Y,d_Y)$,   $\phi\circ f$ is a  Scott weight of $(X,d_X)$. This follows from the fact that a composite of Yoneda continuous maps is Yoneda continuous.

\textbf{Sufficiency}. Write $\sigma_X,\sigma_Y$ for the distances of  $\Sigma(X,d_X)$ and $\Sigma(Y,d_Y)$, respectively.   For all $x,y\in X$,  by Lemma \ref{4.3}, \[d_X(x,y)=\si_X(x,\{y\})\geq\si_Y(f(x),\{f(y)\})= d_Y(f(x),f(y)),\] hence $f$ is non-expansive.

It remains to show that $f$ preserves Yoneda limits.
Given a forward Cauchy net $\{x_i\}_i$ of $(X,d_X)$ and a Yoneda limit $x$ of $\{x_i\}_i$, we  show that $f(x)$ is a Yoneda limit of $\{f(x_i)\}_i$, that is, for all $y\in Y$, \[\inf_i\sup_{j\geq i}d_Y(f(x_j),y)=d_Y(f(x),y).\]

Since $f$ is a contraction, $d_Y(f(-),y)=d_Y(-,y)\circ f$ is a Scott weight of $(X,d)$, hence \[\inf_i\sup_{j\geq i}d_Y(f(x_j),y)\geq d_Y(f(x),y).\] Conversely,
\begin{align*}\inf_i\sup_{j\geq i}d_Y(f(x_j),y)&\leq \inf_i\sup_{j\geq i}(d_Y(f(x_j),f(x))+d_Y(f(x),y))\\ &\leq \inf_i\sup_{j\geq i}(d_X(x_j,x)+d_Y(f(x),y))\\ & =d_Y(f(x),y).\end{align*}

 This completes the proof.
\end{proof}

The correspondence $(X,d)\mapsto\Sigma(X,d)$ defines a full and faithful functor \[\Sigma: {\sf Met}^\ua\lra{\sf App}\] from the category of metric spaces and Yoneda continuous maps to the category of approach spaces. Moreover, the following square commutes:
\[\bfig \square<700,500>[{\sf Ord}^\ua` {\sf Top}` {\sf Met}^\ua  `{\sf App}; \Sigma` \omega ` \omega ` \Sigma]\efig\] where, ${\sf Ord}^\ua$ denotes the category of ordered sets and Scott continuous maps;  the functor $\Sigma: {\sf Ord}^\ua\lra{\sf Top}$  sends each ordered set to its Scott topology. Thus, Scott distance  on  metric spaces is an extension of Scott topology on ordered sets.

In order to present a  useful characterization of Scott weights, Proposition \ref{Scott via flat},  we need some other notions.

Let $(X,d)$ be a metric space. For each weight $\phi$ and each coweight $\psi$ of $(X,d)$, the tensor product of $\phi$ and $\psi$ \cite{SV2005} (a special case of composition of bimodules in \cite{Lawvere73}) is an element in $[0,\infty]$, given by \[\phi\otimes \psi=\inf_{x\in X}(\phi(x)+\psi(x)).\]

\begin{defn}(\cite{SV2005}) Let $(X,d)$ be a metric space,  a weight $\phi$ of $(X,d)$ is flat if $\inf_{x\in X}\phi(x)=0$ and
$\phi\otimes\max\{\psi_1,\psi_2\}=\max\{\phi\otimes \psi_1,\phi\otimes \psi_2\}$ for any coweights $\psi_1,\psi_2$ on $(X,d)$. \end{defn}

Every representable weight $d(-,x)$ is clearly flat.

Let $f:(X,d_X)\lra(Y,d_Y)$ be a non-expansive map between metric spaces. It is easy to check that for each weight $\phi$ of $(X,d_X)$ and each coweight $\psi$ of $(Y,d_Y)$,  \begin{equation*}%\label{tensor1}
\overline{f}(\phi)\otimes\psi =\phi\otimes(\psi\circ f). \end{equation*}
The following conclusion is easily verified with help of this equation.

\begin{lem}\label{image of flat weight}  Let $f:(X,d_X)\lra(Y,d_Y)$ be a non-expansive map between metric spaces.   If $\phi\in\CP X$ is flat then so is $\overline{f}(\phi)$.
\end{lem}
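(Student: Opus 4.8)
The plan is to verify the two defining conditions of flatness for $\overline{f}(\phi)$ directly, relying on two facts already recorded in the excerpt: that $\overline{f}(\phi)$ is itself a weight of $(Y,d_Y)$, and that for every coweight $\psi$ of $(Y,d_Y)$ one has the identity $\overline{f}(\phi)\otimes\psi=\phi\otimes(\psi\circ f)$, together with the observation that $\psi\circ f$ is then a coweight of $(X,d_X)$.

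First I would check that $\inf_{y\in Y}\overline{f}(\phi)(y)=0$. For every $x\in X$,
\[
\overline{f}(\phi)(f(x))=\inf_{x'\in X}\bigl(\phi(x')+d_Y(f(x),f(x'))\bigr)\leq \phi(x)+d_Y(f(x),f(x))=\phi(x),
\]
so $\inf_{y\in Y}\overline{f}(\phi)(y)\leq \inf_{x\in X}\phi(x)=0$, the last equality holding because $\phi$ is flat; since the left-hand side is non-negative, it equals $0$.

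Next, for the tensor condition, let $\psi_1,\psi_2$ be coweights of $(Y,d_Y)$. Then $\psi_1\circ f$ and $\psi_2\circ f$ are coweights of $(X,d_X)$, and pointwise $\max\{\psi_1,\psi_2\}\circ f=\max\{\psi_1\circ f,\psi_2\circ f\}$. Applying the tensor identity, then flatness of $\phi$, then the tensor identity twice more,
\begin{align*}
\overline{f}(\phi)\otimes\max\{\psi_1,\psi_2\}
&=\phi\otimes\max\{\psi_1\circ f,\psi_2\circ f\}\\
&=\max\{\phi\otimes(\psi_1\circ f),\,\phi\otimes(\psi_2\circ f)\}\\
&=\max\{\overline{f}(\phi)\otimes\psi_1,\,\overline{f}(\phi)\otimes\psi_2\},
\end{align*}
which is exactly the flatness condition for $\overline{f}(\phi)$. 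Hence $\overline{f}(\phi)$ is flat.

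There is no genuine obstacle here; this is a one-line computation once one routes through the identity $\overline{f}(\phi)\otimes\psi=\phi\otimes(\psi\circ f)$. The only points requiring a moment's care are that this identity is invoked only for coweights on the correct space, that $\psi\circ f$ lands among the coweights of $(X,d_X)$ over which the flatness of $\phi$ is quantified, and the trivial commutation of $\max$ with precomposition by $f$ — all of which are immediate. (One could instead expand both sides as infima over $X$ and $Y$ and verify the equalities by hand, but this is less transparent.)
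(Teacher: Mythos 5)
Your proof is correct and follows exactly the route the paper intends: the paper states the identity $\overline{f}(\phi)\otimes\psi=\phi\otimes(\psi\circ f)$ and asserts the lemma is ``easily verified with help of this equation,'' which is precisely your computation, together with the (needed and correctly supplied) check that $\inf_{y\in Y}\overline{f}(\phi)(y)=0$ via $\overline{f}(\phi)(f(x))\leq\phi(x)$. Nothing is missing.
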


For a weight $\phi$ of a metric space $(X,d)$, let \[\mathrm{B}^+\phi=\{(x,r)\in X\times [0,\infty)\mid \phi(x)<r\}.\] Define a binary relation $\sqsubseteq$ on $\mathrm{B}^+\phi$ by \[(x,r)\sqsubseteq(y,s)\iff r\geq s+d(x,y).\] It is clear that $\sqsubseteq$ is a reflexive and transitive relation. Indeed,  $(\mathrm{B}^+\phi,\sqsubseteq)$ is a subset of the well-known  ordered set $\mathrm{B}X$ of formal balls in $(X,d)$ \cite{Goubault}.

The equivalence of (1) and (3) in the following proposition is contained in \cite[Proposition 7.9 and Theorem 7.15]{SV2005}. A proof is included here for sake of completeness.
\begin{prop}  \label{flat weight} Let $(X,d)$ be a metric space and $\phi$ a weight of $(X,d)$.  The following   are equivalent:
\begin{enumerate}[label={\rm(\arabic*)}] \setlength{\itemsep}{-2pt} \item  $\phi$ is a flat weight.
\item   $\inf_{x\in X}\phi(x)=0$ and $(\mathrm{B}^+\phi,\sqsubseteq)$ is a directed set.
\item  There is a forward Cauchy net $\{x_i\}_i$ in $(X,d)$ such that $\phi=\inf_i\sup_{j\geq i} d(-,x_j)$.\end{enumerate}
\end{prop}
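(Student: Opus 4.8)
The plan is to prove the proposition by establishing the cycle of implications $(1)\Rightarrow(2)\Rightarrow(3)\Rightarrow(1)$, which is the most economical route and matches the way flatness, directedness of formal balls, and forward Cauchy nets are layered in the literature cited.

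\textbf{From (1) to (2).} Suppose $\phi$ is flat. The condition $\inf_{x\in X}\phi(x)=0$ is part of the definition, so only directedness of $(\mathrm{B}^+\phi,\sqsubseteq)$ must be checked. First, $\mathrm{B}^+\phi$ is nonempty: since $\inf_x\phi(x)=0$, there is some $x$ with $\phi(x)<r$ for any chosen $r>0$. Given two elements $(x,r),(y,s)\in\mathrm{B}^+\phi$, I would feed the ``coweights'' $\psi_1=d(x,-)\ominus(r-\phi(x))$-style functions into the flatness identity $\phi\otimes\max\{\psi_1,\psi_2\}=\max\{\phi\otimes\psi_1,\phi\otimes\psi_2\}$; more precisely, set $\varepsilon>0$ small and take $\psi_1(z)=(d(x,z)\ominus (r-\phi(x)-\varepsilon))$ and $\psi_2(z)=(d(y,z)\ominus (s-\phi(y)-\varepsilon))$, which are genuine coweights. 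A short computation shows $\phi\otimes\psi_1=0$ and $\phi\otimes\psi_2=0$ (because $\phi(x)+\psi_1(x)<r$ shrinks below any positive bound appropriately), so the max on the right is $0$; hence $\phi\otimes\max\{\psi_1,\psi_2\}=0$, which produces a $z$ with $\phi(z)+\psi_1(z)$ and $\phi(z)+\psi_2(z)$ both small, i.e. $\phi(z)<r-d(x,z)$ and $\phi(z)<s-d(y,z)$ up to $\varepsilon$; choosing a $t$ strictly between $\phi(z)$ and these bounds gives $(z,t)\in\mathrm{B}^+\phi$ with $(x,r)\sqsubseteq(z,t)$ and $(y,s)\sqsubseteq(z,t)$. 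This is the step I expect to be the main obstacle: getting the coweights exactly right and tracking the $\varepsilon$'s so that strict inequalities survive.

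\textbf{From (2) to (3).} Assuming $(\mathrm{B}^+\phi,\sqsubseteq)$ is directed, I would use $(\mathrm{B}^+\phi,\sqsubseteq)$ itself as the index set of a net: for $i=(x,r)\in\mathrm{B}^+\phi$ put $x_i=x$. Directedness makes this a net. To see it is forward Cauchy: given $\varepsilon>0$, pick (using $\inf\phi=0$) some $(x_0,r_0)$ with $r_0<\varepsilon$; then for $i=(x,r)\sqsubseteq j=(y,s)\sqsubseteq k=(z,t)$ all above $(x_0,r_0)$ one gets $d(x_j,x_k)=d(y,z)\le s-t\le s\le r_0<\varepsilon$, giving $\inf_i\sup_{k\ge j\ge i}d(x_j,x_k)=0$. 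For the liminf formula, fix $z\in X$ and compute $\inf_i\sup_{j\ge i}d(x_j,z)$: using $(x,r)\sqsubseteq(y,s)\Rightarrow d(y,z)\le d(x,z)+d(x,y)$ actually one wants $d(x_j,z)\ge \phi(x_j)$-type bounds; more carefully, $\inf_i\sup_{j\ge i}d(x_j,z)\le \phi(z)$ follows because any $(x,r)$ with $r$ close to $\phi(z)$ is eventually dominated, and $\ge$ follows because $d(x_j,z)\ge \phi(z)-\phi(x_j)$ (weight inequality) and $\phi(x_j)\to 0$ along cofinal choices. Assembling these gives $\phi(z)=\inf_i\sup_{j\ge i}d(x_j,z)$, i.e. $\phi=\inf_i\sup_{j\ge i}d(-,x_j)$.

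\textbf{From (3) to (1).} Given a forward Cauchy net $\{x_i\}_i$ with $\phi=\inf_i\sup_{j\ge i}d(-,x_j)$, I would first note $\inf_{z}\phi(z)\le\inf_i\sup_{j\ge i}d(x_i,x_j)=0$ by the forward Cauchy condition. For the tensor identity, observe that each representable weight $d(-,x_j)$ is flat and that $\phi$ is by Example~\ref{Yoneda limits in PX} the Yoneda limit in $(\CP X,\overline d)$ of the forward Cauchy net $\{d(-,x_j)\}_j$; then I would argue that flatness is preserved under such Yoneda limits by a direct estimate on $\phi\otimes\max\{\psi_1,\psi_2\}=\inf_z(\phi(z)+\max\{\psi_1(z),\psi_2(z)\})$, pushing the $\inf_i\sup_{j\ge i}$ through using that the nets $\{d(x_j,\cdot)+\psi_k\}$ converge pointwise and monotonically in the tail, so $\phi\otimes\psi_k=\inf_i\sup_{j\ge i}(d(-,x_j)\otimes\psi_k)$; since each $d(-,x_j)$ satisfies the max-identity and sup/inf of a forward Cauchy real net commute appropriately (as noted before Proposition~\ref{order convergence}), the identity passes to $\phi$. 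This closes the cycle. The only genuinely delicate point is the $(1)\Rightarrow(2)$ coweight construction; everything else is routine manipulation of weights, coweights, and forward Cauchy tails.
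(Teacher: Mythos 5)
Your overall route, the cycle $(1)\Rightarrow(2)\Rightarrow(3)\Rightarrow(1)$ with $\mathrm{B}^+\phi$ serving as the index set of the net, is exactly the paper's, but your $(1)\Rightarrow(2)$ step contains a genuine error, and you correctly flagged it as the delicate point. The claim that $\phi\otimes\psi_1=0$ for $\psi_1=d(x,-)\ominus\bigl(r-\phi(x)-\varepsilon\bigr)$ is false in general: take $\phi=d(-,a)$ representable (hence flat); then $\phi\otimes\psi=\psi(a)$ for every coweight $\psi$, so $\phi\otimes\psi_1=d(x,a)\ominus\bigl(r-d(x,a)-\varepsilon\bigr)$, which is strictly positive as soon as $2d(x,a)+\varepsilon>r$ (e.g. $d(x,a)=0.9$, $r=1$). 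Consequently flatness only yields a witness $z$ with $\phi(z)+\psi_1(z)$ and $\phi(z)+\psi_2(z)$ below $\max\{\phi\otimes\psi_1,\phi\otimes\psi_2\}+\eta$, and since that bound mixes the data of $(x,r)$ and $(y,s)$ asymmetrically (it can be dominated by the ``wrong'' ball), it does not deliver $\phi(z)+d(x,z)<r$ and $\phi(z)+d(y,z)<s$. The repair is the symmetric choice used in the paper: $\psi_1=s+d(x,-)$ and $\psi_2=r+d(y,-)$. Then $\phi\otimes\psi_1\leq s+\phi(x)<r+s$ and $\phi\otimes\psi_2\leq r+\phi(y)<r+s$, so flatness gives some $z$ with $\phi(z)+\max\{s+d(x,z),\,r+d(y,z)\}<r+s$, which is exactly $\phi(z)+d(x,z)<r$ and $\phi(z)+d(y,z)<s$; setting $t=\min\{r-d(x,z),\,s-d(y,z)\}$ produces $(z,t)\in\mathrm{B}^+\phi$ above both $(x,r)$ and $(y,s)$.

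The remaining two implications are essentially the paper's argument and are sound in outline, with two small points to tighten. In $(2)\Rightarrow(3)$ (which the paper dismisses as routine) your forward-Cauchy check is fine, but the inequality $\inf_i\sup_{j\geq i}d(z,x_j)\leq\phi(z)$ is obtained most cleanly not by ``eventual domination'' but by using the index $(z,\phi(z)+\varepsilon)\in\mathrm{B}^+\phi$ itself (when $\phi(z)<\infty$): every $(x_j,r_j)\sqsupseteq(z,\phi(z)+\varepsilon)$ satisfies $d(z,x_j)\leq\phi(z)+\varepsilon$; also mind the argument order, the formula is $\phi(z)=\inf_i\sup_{j\geq i}d(z,x_j)$. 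In $(3)\Rightarrow(1)$ your key identity $\phi\otimes\psi=\inf_i\sup_{j\geq i}\psi(x_j)$ is precisely the paper's, but the justification ``the nets converge pointwise and monotonically in the tail'' is not right as stated (nothing is monotone here); the interchange of $\inf_z$ with $\inf_i\sup_{j\geq i}$ needs the forward-Cauchy estimate: for $\varepsilon>0$ choose $i_\varepsilon$ with $d(x_i,x_j)\leq\varepsilon$ whenever $i_\varepsilon\leq i\leq j$, and then bound $\phi\otimes\psi\leq\inf_{i\geq i_\varepsilon}\psi(x_i)+\varepsilon\leq\inf_i\sup_{j\geq i}\psi(x_j)+\varepsilon$. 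With these repairs your plan coincides with the paper's proof.
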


\begin{proof}

 $(1)\Rightarrow(2)$ We only need to check that $(\mathrm{B}^+\phi,\sqsubseteq)$ is  directed. Given $(x,r)$ and $(y,s)$ in $\mathrm{B}^+\phi$, consider the coweights $\psi_1=s+d(x,-)$ and $\psi_2=r+d(y,-)$. Since $\phi\otimes\psi_1=s+\phi(x) <s+r$ and $\phi\otimes\psi_2=r+\phi(y) <s+r$, it follows that $\phi\otimes\max\{\psi_1,\psi_2\}<r+s$. Then there is some $z\in X$ such that $\max\{\psi_1(z),\psi_2(z)\}+\phi(z)<r+s$, hence $\phi(z)+d(x,z)<r$ and $\phi(z)+d(y,z)<s$. Let $t=\min\{r-d(x,z),s-d(y,z)\}$. Then $\phi(z)<t$, $r\geq t+ d(x,z)$ and $s\geq t+d(y,z)$. This means that $(z,t)$ is an element in $\mathrm{B}^+\phi$ and is an upper bound of $(x,r)$ and $(y,s)$.

$(2)\Rightarrow(3)$ Write an element  in ${\rm B}^+\phi$ as a pair $(x_i,r_i)$ and define a net \[\mathfrak{x}:({\rm B}^+\phi,\sqsubseteq)\lra X\] by $\mathfrak{x}(x_i,r_i)=x_i.$
It is routine to check that $\mathfrak{x}$ is a forward Cauchy net and	that  for all $x\in X$,  \[ \phi(x)=\inf_{(x_i,r_i)}\sup_{(x_j,r_j) \sqsupseteq (x_i,r_i)}d(x,x_{j}). \]

$(3)\Rightarrow(1)$ First, we show that for every coweight $\psi$ of $(X,d)$, $\phi\otimes\psi=\inf_i\sup_{j\geq i}\psi(x_j)$.

On one hand, \begin{align*}\phi\otimes\psi& = \inf_x\Big( \inf_i\sup_{j\geq i}d(x,x_j)+\psi(x)\Big)\\ &= \inf_i\inf_x\sup_{j\geq i}(d(x,x_j)+\psi(x))\\ &\geq \inf_i\sup_{j\geq i}\inf_x(d(x,x_j)+\psi(x))\\ &= \inf_i\sup_{j\geq i}\psi(x_j). \end{align*}

On the other hand, for each $\varepsilon>0$, since $\{x_i\}_i$ is forward Cauchy, there is some $i_\varepsilon$ such that $d(x_i,x_j)\leq\varepsilon$ whenever $i_\varepsilon\leq i\leq j$. Then \begin{align*}\phi\otimes\psi& =   \inf_x\inf_i\sup_{j\geq i}(d(x,x_j)+\psi(x))\\ &\leq \inf_x\inf_{i\geq i_\varepsilon}\sup_{j\geq i}(d(x,x_j)+\psi(x))\\ &\leq \inf_x\inf_{i\geq i_\varepsilon}(d(x,x_i)+\varepsilon+\psi(x))\\ &= \inf_{i\geq i_\varepsilon}\psi(x_i)+\varepsilon \\ &\leq \inf_i\sup_{j\geq i}\psi(x_j)+\varepsilon. \end{align*} Therefore, $\phi\otimes\psi\leq\inf_i\sup_{j\geq i}\psi(x_j)$ by arbitrariness of $\varepsilon$.

Now for any coweights $\psi_1, \psi_2$ of $(X,d)$, \begin{align*}\max\{\phi\otimes \psi_1, \phi\otimes\psi_2\} &=\max\Big\{\inf_i\sup_{j\geq i}\psi_1(x_j),\inf_i\sup_{j\geq i} \psi_2(x_j)\Big\}  \\ &   = \inf_i\sup_{j\geq i}\max\{\psi_1(x_j),\psi_2(x_j)\}  \\ &  = \phi\otimes\max\{\psi_1,\psi_2\},  \end{align*} showing that $\phi$ is flat.
 \end{proof}

Let $(X,d)$ be a metric space and $\phi$ a weight of $(X,d)$. An element $a\in X$ is called a \emph{colimit} of $\phi$ if \begin{equation*}
\overline{d}(\phi,\y(y)) =d(a,y)\end{equation*} for all $y\in X$ \cite{BvBR1998,Ru}. In the language of enriched category theory, a colimit of $\phi$ is said to be a colimit of the identity map $(X,d)\lra(X,d)$ weighted by $\phi$ \cite{Kelly}.

\begin{prop} {\rm (\cite[Lemma 46]{FSW})} \label{Yoneda limit=colimit} For each forward Cauchy net $\{x_i\}_i$  in a metric space  $(X,d)$, an element $x$   is a Yoneda limit of  $\{x_i\}_i$ if and only if $x$ is a colimit of the   weight  $\phi=\inf_i\sup_{j\geq i} d(-,x_j)$. \end{prop}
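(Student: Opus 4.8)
The plan is to unwind both sides of the claimed equivalence directly from the definitions, using the description of a forward Cauchy net's Yoneda limit via pointwise convergence (the remark after the definition of Yoneda complete) and the formula $\overline{d}(\phi,\psi)=\sup_{x}d_L(\phi(x),\psi(x))$ together with the Yoneda lemma $\overline{d}(d(-,x),\phi)=\phi(x)$. Fix a forward Cauchy net $\{x_i\}_i$ in $(X,d)$ and set $\phi=\inf_i\sup_{j\geq i}d(-,x_j)$; recall from Example \ref{Yoneda limits in PX} (or directly) that $\phi\in\CP X$, and from Proposition \ref{flat weight} that $\phi$ is flat. The goal is: for $x\in X$, $x$ is a Yoneda limit of $\{x_i\}_i$ iff $\overline{d}(\phi,\y(y))=d(x,y)$ for all $y\in X$.

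First I would translate the colimit condition. For any $y\in X$, by definition $\overline{d}(\phi,\y(y))=\overline{d}(\phi,d(-,y))=\sup_{z\in X}d_L(\phi(z),d(z,y))=\inf\{r\mid d(-,y)\leq\phi+r\}$. The key computation is to identify this quantity with $\inf_i\sup_{j\geq i}d(x_j,y)$. For the inequality $\overline{d}(\phi,\y(y))\leq \inf_i\sup_{j\geq i}d(x_j,y)$: one shows $d(z,y)\leq\phi(z)+\sup_{j\geq i}d(x_j,y)$ for every $z$ and every $i$, using $d(z,y)\leq d(z,x_j)+d(x_j,y)\leq d(z,x_j)+\sup_{k\geq i}d(x_k,y)$ and taking $\sup_{j\geq i}$ then $\inf$ appropriately; more carefully, $\sup_{j\geq i}\big(d(z,x_j)\big)\geq\phi(z)$ eventually fails in the wrong direction, so the cleanest route is: for fixed $i$, $d(z,y)\leq d(z,x_j)+d(x_j,y)$ for each $j\geq i$, so $d(z,y)\leq \big(\sup_{j\geq i}d(z,x_j)\big)\big.$ is not quite it either — instead fix $\varepsilon>0$, pick $i$ with $d(x_j,x_k)\leq\varepsilon$ for $k\geq j\geq i$; then for $j\geq i$, $d(z,y)\leq d(z,x_j)+d(x_j,y)$, and using forward Cauchyness $\sup_{j\geq i}d(z,x_j)\leq \phi(z)+\varepsilon$, which combined with choosing $j$ nearly attaining $\sup_{j\geq i}d(x_j,y)$ gives $d(z,y)\leq\phi(z)+\sup_{j\geq i}d(x_j,y)+\varepsilon$. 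For the reverse inequality one uses that $\phi\leq\sup_{j\geq i}d(-,x_j)$ and representability. Hence $\overline{d}(\phi,\y(y))=\inf_i\sup_{j\geq i}d(x_j,y)$ for all $y$.

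Granting that identity, the proposition becomes almost immediate: $x$ is a colimit of $\phi$ iff $d(x,y)=\overline{d}(\phi,\y(y))=\inf_i\sup_{j\geq i}d(x_j,y)$ for all $y\in X$, which is verbatim the definition of $x$ being a Yoneda limit of $\{x_i\}_i$. So the content of the proof is entirely in the lemma $\overline{d}(\phi,\y(y))=\inf_i\sup_{j\geq i}d(x_j,y)$; I expect the main obstacle to be the careful $\varepsilon$-bookkeeping in the direction $\overline{d}(\phi,\y(y))\leq\inf_i\sup_{j\geq i}d(x_j,y)$, where the forward Cauchy property must be invoked to control $\sup_{j\geq i}d(z,x_j)$ in terms of $\phi(z)=\inf_i\sup_{j\geq i}d(z,x_j)$ (these differ, in general, only by an $\varepsilon$ that the forward Cauchy condition makes uniform in $z$). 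Everything else — flatness of $\phi$, membership in $\CP X$ — is already available from Proposition \ref{flat weight} and Example \ref{Yoneda limits in PX}, and the final step is a one-line comparison of definitions. Alternatively, one could cite the general enriched-category fact that colimits weighted by $\phi$ are computed by $\overline{d}(\phi,\y(-))$ and that this weight's colimit coincides with the liminf construction, but writing out the two-inequality argument above is self-contained and matches the level of detail in the surrounding text.
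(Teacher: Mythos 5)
Your overall reduction is the right one, and the final step is exactly right: once one knows $\overline{d}(\phi,\y(y))=\inf_i\sup_{j\geq i}d(x_j,y)$ for all $y\in X$, the colimit condition $\overline{d}(\phi,\y(y))=d(x,y)$ is verbatim the definition of Yoneda limit. (The paper itself does not prove this proposition but cites \cite[Lemma~46]{FSW}; the quickest route inside the paper's own toolkit to your key identity is Example \ref{Yoneda limits in PX}: $\{\y(x_j)\}_j$ is forward Cauchy in $(\CP X,\overline{d})$ and $\phi$ is its Yoneda limit there, so $\overline{d}(\phi,\psi)=\inf_i\sup_{j\geq i}\overline{d}(\y(x_j),\psi)=\inf_i\sup_{j\geq i}\psi(x_j)$ for every weight $\psi$ by the Yoneda lemma; specializing $\psi=\y(y)$ gives the identity with no $\varepsilon$-bookkeeping. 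This is precisely the computation the paper performs inside the proof of Proposition \ref{Scott via flat}.)

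There is, however, a genuine flaw in the step you lean on for the inequality $\overline{d}(\phi,\y(y))\leq\inf_i\sup_{j\geq i}d(x_j,y)$: if $i$ is chosen only from the forward-Cauchy modulus (so that $d(x_j,x_k)\leq\varepsilon$ for $k\geq j\geq i$), the claim $\sup_{j\geq i}d(z,x_j)\leq\phi(z)+\varepsilon$ is false in general, and forward Cauchyness does \emph{not} make this estimate uniform in $z$, contrary to your closing parenthetical. Concretely, take $x_n=1/n$ in $([0,\infty],d_L)$: then $d_L(x_j,x_k)=0$ for all $k\geq j$, so $i=1$ witnesses forward Cauchyness for every $\varepsilon$, yet for $z=0$ one has $\sup_{j\geq 1}d_L(0,x_j)=1$ while $\phi(0)=0$. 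The repair is easy because no uniformity is needed: fix $z$ and $\varepsilon>0$ and choose $i_1$ (depending on $z$), directly from the definition of $\phi(z)$ as an infimum, with $\sup_{j\geq i_1}d(z,x_j)\leq\phi(z)+\varepsilon$ (trivial if $\phi(z)=\infty$); then $d(z,y)\leq d(z,x_j)+d(x_j,y)\leq\phi(z)+\varepsilon+d(x_j,y)$ for every $j\geq i_1$, and since by directedness $\inf_{j\geq i_1}d(x_j,y)\leq\inf_i\sup_{j\geq i}d(x_j,y)$, the desired bound follows — note this direction in fact uses no forward Cauchyness at all. Where forward Cauchyness genuinely enters is the opposite inequality, which you dismiss with only a vague hint (``$\phi\leq\sup_{j\geq i}d(-,x_j)$ and representability''): there one evaluates at $z=x_k$, uses $\phi(x_k)\leq\sup_{j\geq k}d(x_k,x_j)\leq\varepsilon$ for $k$ beyond the forward-Cauchy index, and uses that the net $\{d(x_j,y)\}_j$, being forward Cauchy in $([0,\infty],d_R)$, converges to $\inf_i\sup_{j\geq i}d(x_j,y)$, so that $d(x_k,y)\ominus\phi(x_k)$ gets within $2\varepsilon$ of that value. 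With these two corrections your argument closes; alternatively, the route through Example \ref{Yoneda limits in PX} avoids the estimates altogether.
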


Suppose that $f:(X,d_X)\lra(Y,d_Y)$ is a non-expansive map, $\{x_i\}_i$ is a forward Cauchy net  in  $(X,d)$, and that  $\phi=\inf_i\sup_{j\geq i} d(-,x_j)$ is the weight generated by $\{x_i\}_i$. It is not hard to check that the  weight of $(Y,d_Y)$ generated by the forward Cauchy net $\{f(x_i)\}_i$ is  $\overline{f}(\phi)$. This fact is indeed a special case of  \cite[Lemma 49]{FSW}. Thus,  a non-expansive map $f:(X,d_X)\lra(Y,d_Y)$ is Yoneda continuous if  and only if $f$ preserves colimits of flat  weights in the sense that if $x$ is a colimit of a flat weight $\phi$, then $f(x)$ is a colimit of the flat weight $\overline{f}(\phi)$. In particular, if $f:(X,d_X)\lra(Y,d_Y)$ is left adjoint to $g:(Y,d_Y)\lra(X,d_X)$, then $f$ is Yoneda continuous, because left adjoints preserve  all colimits.

\begin{prop}\label{Scott via flat} A weight $\phi$ of a metric space $(X,d)$ is   a  Scott weight if and only if for every flat weight $\psi$ of $(X,d)$ and every colimit $x$ of $\psi$, $\overline{d}(\psi,\phi)\geq\phi(x)$.  \end{prop}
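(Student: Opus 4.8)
The plan is to reduce the Scott-weight condition (stated in terms of forward Cauchy nets and Yoneda limits) to the flat-weight condition (stated in terms of flat weights and their colimits), using the dictionary already established: Proposition \ref{flat weight} identifies flat weights with weights of the form $\psi=\inf_i\sup_{j\geq i}d(-,x_j)$ for a forward Cauchy net $\{x_i\}_i$, and Proposition \ref{Yoneda limit=colimit} identifies colimits of such a $\psi$ with Yoneda limits of $\{x_i\}_i$. The bridge between the two inequalities is the observation, coming from the Yoneda lemma, that for a weight $\phi$ and a flat weight $\psi=\inf_i\sup_{j\geq i}d(-,x_j)$ one has
\[\overline{d}(\psi,\phi)=\inf_i\sup_{j\geq i}\phi(x_j).\]
Granting this identity, the two conditions $\inf_i\sup_{j\geq i}\phi(x_j)\geq\phi(x)$ (for every forward Cauchy net with Yoneda limit $x$) and $\overline{d}(\psi,\phi)\geq\phi(x)$ (for every flat weight with colimit $x$) become literally the same statement once translated through Propositions \ref{flat weight} and \ref{Yoneda limit=colimit}, and the proposition follows.

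For the necessity direction ($\phi$ Scott $\Rightarrow$ flat-weight condition): given a flat weight $\psi$ and a colimit $x$ of $\psi$, use Proposition \ref{flat weight} to write $\psi=\inf_i\sup_{j\geq i}d(-,x_j)$ for some forward Cauchy net $\{x_i\}_i$, use Proposition \ref{Yoneda limit=colimit} to conclude $x$ is a Yoneda limit of $\{x_i\}_i$, apply the Scott condition to get $\inf_i\sup_{j\geq i}\phi(x_j)\geq\phi(x)$, and finish with the displayed identity $\overline{d}(\psi,\phi)=\inf_i\sup_{j\geq i}\phi(x_j)$. For the sufficiency direction, reverse the argument: given a forward Cauchy net $\{x_i\}_i$ with Yoneda limit $x$, set $\psi=\inf_i\sup_{j\geq i}d(-,x_j)$, which is flat by Proposition \ref{flat weight}; by Proposition \ref{Yoneda limit=colimit}, $x$ is a colimit of $\psi$; the hypothesis gives $\overline{d}(\psi,\phi)\geq\phi(x)$, and the displayed identity converts this back to $\inf_i\sup_{j\geq i}\phi(x_j)\geq\phi(x)$, which is the Scott condition.

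The one step that requires actual work is the displayed identity $\overline{d}(\psi,\phi)=\inf_i\sup_{j\geq i}\phi(x_j)$. I would prove it by first recalling that $\overline{d}(d(-,x_j),\phi)=\phi(x_j)$ by the Yoneda lemma. Then, since $\psi=\inf_i\sup_{j\geq i}d(-,x_j)$ is the Yoneda limit in $(\CP X,\overline{d})$ of the forward Cauchy net $\{\sup_{j\geq i}d(-,x_j)\}_i$ (Example \ref{Yoneda limits in PX}), and since $\overline{d}(-,\phi):(\CP X,\overline{d})\lra([0,\infty],d_R)$ should be checked to be a Scott weight — equivalently Yoneda continuous — of $(\CP X,\overline{d})$, the value $\overline{d}(\psi,\phi)$ equals $\inf_i\sup_{j\geq i}\overline{d}\big(\sup_{k\geq j}d(-,x_k),\phi\big)$. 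It remains to compute $\overline{d}\big(\sup_{k\geq j}d(-,x_k),\phi\big)=\sup_{k\geq j}\phi(x_k)$, which follows from $\overline{d}(\sup_i\phi_i,\phi)=\sup_i\overline{d}(\phi_i,\phi)$ (immediate from the definition $\overline{d}(\alpha,\beta)=\inf\{r\mid\beta\leq\alpha+r\}$) together with the Yoneda lemma. Combining these gives $\overline{d}(\psi,\phi)=\inf_i\sup_{j\geq i}\sup_{k\geq j}\phi(x_k)=\inf_i\sup_{j\geq i}\phi(x_j)$, as the net $\{\phi(x_i)\}_i$ is forward Cauchy in $([0,\infty],d_R)$. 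The main obstacle is thus the bookkeeping in this identity; everything else is a direct translation through results already proved.
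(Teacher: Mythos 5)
Your overall strategy is the same as the paper's: translate through Propositions \ref{flat weight} and \ref{Yoneda limit=colimit} and reduce everything to the bridging identity $\overline{d}(\psi,\phi)=\inf_i\sup_{j\geq i}\phi(x_j)$. However, your proof of that identity contains a genuine error. The ``immediate'' fact you invoke, $\overline{d}(\sup_i\phi_i,\phi)=\sup_i\overline{d}(\phi_i,\phi)$, is false: from $\overline{d}(\alpha,\beta)=\inf\{r\mid\beta\leq\alpha+r\}$ one only gets $\overline{d}(\sup_i\phi_i,\phi)\leq\sup_i\overline{d}(\phi_i,\phi)$, while the identity that really is immediate from the definition is $\overline{d}(\inf_i\phi_i,\phi)=\sup_i\overline{d}(\phi_i,\phi)$ (it is pointwise \emph{infima} of weights, i.e.\ joins in the underlying order of $(\CP X,\overline{d})$, that $\overline{d}(-,\phi)$ turns into suprema). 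Consequently your asserted evaluation $\overline{d}\big(\sup_{k\geq j}d(-,x_k),\phi\big)=\sup_{k\geq j}\phi(x_k)$ fails, even for tails of forward Cauchy nets. Concretely, in $([0,\infty],d_R)$ take the forward Cauchy net $x_k=k$ and the weight $\phi=d_R(-,0)$, so $\phi(x)=x$; then $\sup_{k\geq j}d_R(-,x_k)=d_R(-,j)$, so the left-hand side equals $\phi(j)=j$ by the Yoneda lemma, whereas the right-hand side is $\sup_{k\geq j}k=\infty$.

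The repair is exactly what the paper does, and it also removes your detour through the tail-sup net: apply Example \ref{Yoneda limits in PX} to the net of representables $\{d(-,x_j)\}_j=\{\y(x_j)\}_j$ itself (forward Cauchy because $\y$ is isometric); its Yoneda limit is $\inf_i\sup_{j\geq i}d(-,x_j)=\psi$, so the definition of Yoneda limit, tested at the point $\phi$ of $(\CP X,\overline{d})$, gives $\overline{d}(\psi,\phi)=\inf_i\sup_{j\geq i}\overline{d}(d(-,x_j),\phi)=\inf_i\sup_{j\geq i}\phi(x_j)$ by the Yoneda lemma; no evaluation of $\overline{d}$ at a pointwise supremum is needed, and your appeal to Yoneda continuity of $\overline{d}(-,\phi)$ is likewise superfluous, being just a restatement of the definition of Yoneda limit. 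With the identity established this way, your two translation directions coincide with the paper's proof.
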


\begin{proof}\textbf{Necessity}. Let $\psi$ be a flat weight of $(X,d)$ with a colimit $x$. By Proposition \ref{flat weight} $\psi=\inf_i\sup_{j\geq i}d(-,x_j)$ for some forward Cauchy net $\{x_i\}_i$. By Proposition \ref{Yoneda limit=colimit}, $x$ is a Yoneda limit of $\{x_i\}_i$. By Example \ref{Yoneda limits in PX}, $\psi$ is a Yoneda limit of $\{d(-,x_j)\}$ in $(\CP X,\overline{d})$, hence \[\overline{d}(\psi,\phi)=\inf_i\sup_{j\geq i} \overline{d}(d(-,x_j),\phi)=\inf_i\sup_{j\geq i}\phi(x_j)\geq \phi(x). \]

\textbf{Sufficiency}. Suppose $\{x_i\}_i$ is a forward Cauchy net with $x$ being a Yoneda limit. Then $\psi=\inf_i\sup_{j\geq i}d(-,x_j)$ is a flat weight with $x$ being a colimit.  Since $\psi$ is a Yoneda limit of $\{d(-,x_j)\}$ in $(\CP X,\overline{d})$, then  \[\inf_i\sup_{j\geq i}\phi(x_j)=\inf_i\sup_{j\geq i} \overline{d}(d(-,x_j),\phi)=\overline{d}(\psi,\phi)\geq\phi(x). \]
This completes the proof. \end{proof}

The Scott distance on a metric space is in general different from its Alexandroff distance. However, they coincide for the class of Smyth completable spaces. A metric space is  Smyth completable if every forward Cauchy net is biCauchy \cite{KS2002}. It is shown in  \cite[Proposition 6.5]{LZ16} that a metric space is  Smyth completable if and only if all of its flat weights are Cauchy, where a weight $\phi$ of  $(X,d)$ is Cauchy  \cite{Lawvere73} if there is a coweight $\psi$ of  $(X,d)$  such that $\phi\otimes \psi=0$ and $\phi(x)+\psi(y)\geq d(x,y)$ for all $x,y\in X$.

\begin{cor}\label{Smyth} For each Smyth completable metric space, the Scott distance coincides with the Alexandroff distance. \end{cor}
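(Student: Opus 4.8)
The plan is to prove the stronger statement that for a Smyth completable metric space $(X,d)$ \emph{every} weight of $(X,d)$ is a Scott weight. Granting this, the conclusion is immediate: the Alexandroff distance $\Gamma(d)$ and the Scott distance $\sigma$ are produced from $\CP X$ and from the set of Scott weights, respectively, by one and the same sup-formula, so if the two sets of weights coincide then $\sigma=\Gamma(d)$. (The inequality $\sigma\le\Gamma(d)$ is anyway automatic, since Scott weights form a subset of $\CP X$.)

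To see that an arbitrary $\phi\in\CP X$ is a Scott weight I would use the characterization of Proposition \ref{Scott via flat}: it suffices to check $\overline{d}(\psi,\phi)\ge\phi(x)$ whenever $\psi$ is a flat weight of $(X,d)$ and $x$ is a colimit of $\psi$. The heart of the matter is to show that in a Smyth completable space such a $\psi$ is necessarily representable, i.e.\ $\psi=d(-,x)$. By \cite[Proposition 6.5]{LZ16} every flat weight of $(X,d)$ is Cauchy, so there is a coweight $\xi$ with $\psi\otimes\xi=0$ and $d(a,b)\le\psi(a)+\xi(b)$ for all $a,b\in X$. Writing out the definition of $\overline{d}$ rephrases the colimit condition on $x$ as the identity $d(x,y)=\sup_{a\in X}(d(a,y)\ominus\psi(a))$ for every $y\in X$. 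From this together with the two Cauchy inequalities one first extracts $\xi=d(x,-)$ — the inequality $d(x,-)\le\xi$ comes directly from $d(a,b)\le\psi(a)+\xi(b)$, and $\xi\le d(x,-)$ from a short $\varepsilon$-estimate using $\psi\otimes\xi=0$ — and then, substituting $\xi=d(x,-)$ back into the same two inequalities, an analogous argument yields $\psi=d(-,x)$.

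With $\psi=d(-,x)$ in hand, the Yoneda lemma gives $\overline{d}(\psi,\phi)=\overline{d}(d(-,x),\phi)=\phi(x)$, so the inequality required by Proposition \ref{Scott via flat} holds (in fact with equality). Hence $\phi$ is a Scott weight; since $\phi$ was arbitrary, the Scott weights of $(X,d)$ are precisely its weights, and therefore $\sigma=\Gamma(d)$.

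The one delicate point is the representability step, that is, deducing $\psi=d(-,x)$ from the interaction of $\psi\otimes\xi=0$, $d(a,b)\le\psi(a)+\xi(b)$ and the colimit identity. This is an elementary $\varepsilon$-argument, but one must handle the conventions for truncated subtraction and for computations involving $\infty$ with some care; everything else is routine manipulation of the defining formulas for $\sigma$ and $\Gamma(d)$.
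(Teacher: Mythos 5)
Your proposal is correct and follows essentially the same route as the paper: the paper's proof of Corollary \ref{Smyth} is precisely Proposition \ref{Scott via flat} combined with the observation that a Cauchy weight possessing a colimit is representable (using \cite[Proposition 6.5]{LZ16} to know that all flat weights are Cauchy), so that every weight is a Scott weight and the two distances are given by the same formula. The only difference is that you spell out the $\varepsilon$-argument for the representability observation, which the paper states without proof; your sketch of that argument (first $\xi=d(x,-)$, then $\psi=d(-,x)$) is sound.
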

\begin{proof}This follows  from Proposition \ref{Scott via flat} and the observation that if a Cauchy weight of a metric space has a colimit then it is representable. \end{proof}

Smyth completability is not a necessary condition  for Scott distance to coincide  with Alexandroff distance. For example,   the space $([0,\infty),d_R)$  is not Smyth completable, but, its Scott distance is equal to its Alexandroff distance.

Let  $(X,d)$ be a metric space. The topological coreflection of the Alexandroff distance is a natural topology for  $(X,d)$ -- the open ball topology.  The topological coreflection of the Scott distance is  also a natural topology for  $(X,d)$, so, it  deserves a name.
\begin{defn}For a metric space $(X,d)$,  the topological coreflection of its Scott distance is called   the $c$-Scott topology on $(X,d)$.\end{defn}

It is clear that the $c$-Scott topology is coarser than the open ball topology, and they are equal for a Smyth completable metric space by Corollary \ref{Smyth}. In the following we discuss the relationship among the $c$-Scott topology, the $d$-Scott topology \cite{Goubault} and the generalized Scott topology \cite{BvBR1998} on metric spaces.
The main result asserts that the $c$-Scott topology is sandwiched between  the $d$-Scott topology and the generalized Scott topology.

\begin{defn} (\cite{BvBR1998}) A subset $U$ of a metric space $(X,d)$ is  generalized Scott open  if for every forward Cauchy net $\{x_i\}_i$ and every Yoneda limit $x$ of $\{x_i\}_i$, if $x\in U$ then there is some $\varepsilon>0$ and some index $i$ such that the open ball $B(x_j,\varepsilon)$ is contained in $U$ for all $j\geq i$.
The generalized Scott open subsets of $(X,d)$ form a topology, called the  generalized Scott topology  on $(X,d)$.\end{defn}

For a metric space $(X,d)$, let   \[\mathrm{B}X=\{(x,r)\mid x\in X, r\in [0,\infty)\}.\]   Define a binary relation $\sqsubseteq$ on $\mathrm{B}X$ by \[(x,r)\sqsubseteq(y,s)\iff r\geq s+d(x,y).\] The  ordered set $(\mathrm{B}X,\sqsubseteq)$  is called the set of formal balls in $(X,d)$, it plays an important role in the study of metric spaces, see e.g. \cite{Goubault,GN}. The ordered set $(\mathrm{B}^+\phi,\sqsubseteq)$ in Proposition \ref{flat weight} is  a subset of $(\mathrm{B}X,\sqsubseteq)$.

\begin{defn}(\cite{Goubault}) The $d$-Scott topology on a metric space $(X,d)$ is the topology on $X$ inherited from the Scott topology on the ordered set  $(\mathrm{B}X,\sqsubseteq)$ via the embedding   $\eta_X:X\lra \mathrm{B}X$ that sends each $x$ to   $(x,0)$. \end{defn}

\begin{thm}\label{sandwich}For each metric space $(X,d)$,  the $c$-Scott topology   is coarser than the generalized Scott topology and is finer than  the $d$-Scott topology. \end{thm}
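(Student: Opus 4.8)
The plan is to describe the closed sets of the three topologies concretely and then prove the two inclusions separately. By the remark following Theorem~\ref{regular functions}, the closed sets of the topological coreflection of an approach space are exactly $\{\phi^{-1}(0)\mid\phi\in\mathcal R X\}$; and since the regular functions of $\Sigma(X,d)$ are precisely the Scott weights, the $c$-Scott closed subsets of $(X,d)$ are exactly the sets $\phi^{-1}(0)$ with $\phi$ a Scott weight. Hence it is enough to prove: \textbf{(A)} for every Scott weight $\phi$ the set $\phi^{-1}(0)$ is generalized Scott closed; and \textbf{(B)} for every Scott closed subset $\mathcal C$ of the ordered set $(\mathrm{B}X,\sqsubseteq)$ the set $\eta_X^{-1}(\mathcal C)$ is of the form $\phi^{-1}(0)$ for some Scott weight $\phi$.

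For \textbf{(A)} I would instead show that $U:=\{x\in X\mid\phi(x)>0\}$ is generalized Scott open. Let $\{x_i\}_i$ be a forward Cauchy net with a Yoneda limit $x\in U$. Since $\phi$ is a Scott weight, the remark following its definition together with Proposition~\ref{order convergence} gives $\sup_i\inf_{j\geq i}\phi(x_j)=\inf_i\sup_{j\geq i}\phi(x_j)=\phi(x)>0$, so there are $\varepsilon>0$ with $2\varepsilon<\phi(x)$ (take $\varepsilon=1$ if $\phi(x)=\infty$) and an index $i$ with $\phi(x_j)>2\varepsilon$ for all $j\geq i$. For such $j$ and any $y\in B(x_j,\varepsilon)$ the weight inequality gives $\phi(y)\geq\phi(x_j)-d(x_j,y)>\varepsilon>0$, whence $B(x_j,\varepsilon)\subseteq U$; so $U$ is generalized Scott open. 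This shows the $c$-Scott topology is coarser than the generalized Scott topology.

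For \textbf{(B)} I would set $\phi(x)=\inf\{r\geq 0\mid(x,r)\in\mathcal C\}$ (with $\inf\varnothing=\infty$). Because $\mathcal C$ is a lower set, $\{r\mid(x,r)\in\mathcal C\}$ is upward closed and a short computation shows $\phi$ is a weight; moreover $(x,0)=\bigvee_{\varepsilon>0}(x,\varepsilon)$ is a directed join in $\mathrm{B}X$, so Scott closedness of $\mathcal C$ yields $(x,0)\in\mathcal C$ as soon as all $(x,\varepsilon)$, $\varepsilon>0$, lie in $\mathcal C$, and together with upward closedness this gives $\phi^{-1}(0)=\eta_X^{-1}(\mathcal C)$. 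The crux is that $\phi$ is a Scott weight. Given a forward Cauchy net $\{x_j\}_j$ with a Yoneda limit $x$, let $\psi=\inf_i\sup_{j\geq i}d(-,x_j)$ be the associated flat weight (Proposition~\ref{flat weight}) and $L=\inf_i\sup_{j\geq i}\phi(x_j)$; since every weight satisfies $\phi(x)\geq L$, it suffices to prove $\phi(x)\leq L$, and we may assume $L<\infty$. Taking $\limsup_j$ in $\phi(y)\leq\phi(x_j)+d(y,x_j)$, where the two right-hand nets converge (by Example~\ref{d_L}) to $L$ and to $\psi(y)$ respectively, gives $\phi(y)\leq\psi(y)+L$ for all $y$. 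Consequently the set $D=\{(y,r)\in\mathrm{B}X\mid\psi(y)+L<r\}$ is contained in $\mathcal C$ (if $\psi(y)+L<r$ then $\phi(y)<r$, so $(y,r)\in\mathcal C$), and $D$ is directed because $(y,r)\mapsto(y,r-L)$ is an order isomorphism of $D$ onto $\mathrm{B}^+\psi$, which is directed (Proposition~\ref{flat weight}). By Proposition~\ref{Yoneda limit=colimit}, $x$ is a colimit of $\psi$, so $d(y,x)\leq\psi(y)$ for all $y$ and $\overline{d}(\psi,\y(b))=d(x,b)$ for all $b$; from these one checks that $(x,L)=\bigvee D$ in $\mathrm{B}X$ (it is an upper bound by the first fact, and least by the second). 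Since $\mathcal C$ is Scott closed and $D\subseteq\mathcal C$ is directed with $\bigvee D=(x,L)$, it follows that $(x,L)\in\mathcal C$, i.e.\ $\phi(x)\leq L$. Thus $\phi$ is a Scott weight and $\eta_X^{-1}(\mathcal C)=\phi^{-1}(0)$ is $c$-Scott closed, so the $c$-Scott topology is finer than the $d$-Scott topology.

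The routine part is \textbf{(A)}, which is a direct argument with the defining (in)equality of Scott weights. The work is in \textbf{(B)}: the delicate point is that directed suprema of formal balls in $\mathrm{B}X$ need \emph{not} be of the form $(a,\inf_j r_j)$ with $a$ a Yoneda limit of the corresponding forward Cauchy net, so one cannot simply read off $\bigvee D$ --- it must be located through the colimit description of the flat weight $\psi$, and the vertical shift by $L$ in the definition of $D$ is exactly what is needed to make $D$ lie inside $\mathcal C$ while keeping its join over the point $x$ (at radius $L$ instead of $0$).
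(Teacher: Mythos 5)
Your proof is correct. The first half (coarser than the generalized Scott topology) and the overall skeleton of the second half coincide with the paper's: the paper also reduces the $d$-Scott comparison to showing that, for a Scott closed $F\subseteq(\mathrm{B}X,\sqsubseteq)$, the function $\phi_F(x)=\inf\{r\mid(x,r)\in F\}$ is a Scott weight with $\phi_F^{-1}(0)=\eta_X^{-1}(F)$. Where you genuinely diverge is in proving Scott-weight-ness. The paper stays at the level of nets: it builds a directed set of formal balls $D=\{(x_i,r)\mid \sup_{j\geq i}d(x_i,x_j)\leq r/2\}$ directly from the forward Cauchy net, proves (somewhat laboriously) that its underlying net still has $a$ as a Yoneda limit, shifts radii by $\beta+\varepsilon$, and invokes Lemma \ref{7.4.25} twice, finishing with an $\varepsilon\downarrow 0$ step. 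You instead pass to the flat weight $\psi=\inf_i\sup_{j\geq i}d(-,x_j)$, establish $\phi\leq\psi+L$ by a limit argument, and compute directly that the $L$-shifted copy of $\mathrm{B}^+\psi$ (which lies in $F$) has join exactly $(x,L)$, using only directedness of $\mathrm{B}^+\psi$ (Proposition \ref{flat weight}) and the colimit equation $\overline{d}(\psi,\y(z))=d(x,z)$ (Proposition \ref{Yoneda limit=colimit}). This buys a cleaner argument: no appeal to Lemma \ref{7.4.25}, no auxiliary Yoneda-limit verification for a reindexed net, and no limiting step at the end, since the strict inequality in your $D$ and the shift by $L$ do that work. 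The only glossed point is the "one checks" in the least-upper-bound verification: for an upper bound $(z,t)$ one gets $\psi(y)+L\geq t+d(y,z)$ for all $y$ with $\psi(y)<\infty$, whence $t\leq L$ (using $\inf_y\psi(y)=0$) and then $d(x,z)=\sup_y\bigl(d(y,z)\ominus\psi(y)\bigr)\leq L-t$, i.e.\ $(x,L)\sqsubseteq(z,t)$; this does check out, as does the upper-bound half via $d(y,x)\leq\psi(y)$, so the argument is complete.
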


In order to prove this conclusion, we make some preparations first. Suppose $D=\{(x_i,r_i)\}_i$ is a directed  set of $(\mathrm{B}X,\sqsubseteq)$. Define an order on the index set by $i\leq j$ if $(x_i,r_i)\sqsubseteq(x_j,r_j)$. Then the index set becomes a directed set, the resulting net $\{x_i\}_i$ is called the \emph{underlying net} of $D$. Since $r_i\geq r_j+d(x_i,x_j)$ whenever $(x_i,r_i)\sqsubseteq(x_j,r_j)$, it follows that $\{r_i\}_i$ converges to $r=\inf_ir_i$. So, for each $\varepsilon>0$, there is some index $i$ such that $d(x_j,x_k)\leq\varepsilon$ whenever $i\leq j\leq k$. In particular, the underlying net of $D$ is  forward Cauchy.

\begin{lem}{\rm(\cite[Lemma 7.4.25]{Goubault})} \label{7.4.25}
Let $\{(x_i,r_i)\}_i$ be a directed  set of $(\mathrm{B}X,\sqsubseteq)$. If $x$ is a Yoneda limit of the underlying net $\{x_i\}_i$  and $r=\inf_ir_i$, then $(x,r)$ is a join of  $\{(x_i,r_i)\}_i$ in $(\mathrm{B}X,\sqsubseteq)$.\end{lem}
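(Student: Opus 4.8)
The plan is to verify directly that $(x,r)$ satisfies the two defining properties of a join in $(\mathrm{B}X,\sqsubseteq)$: that it is an upper bound of $\{(x_i,r_i)\}_i$, and that it lies below every upper bound. Throughout I will exploit the reformulation of the Yoneda limit recorded earlier in the excerpt, namely that $x$ being a Yoneda limit of $\{x_i\}_i$ means $d(x,y)=\inf_i\sup_{j\geq i}d(x_j,y)$ for every $y\in X$, together with the two consequences of the order relations along the directed index set: for $j\geq i$ one has $r_i\geq r_j+d(x_i,x_j)$, whence both $r_j\leq r_i$ (so $\{r_i\}_i$ decreases to $r=\inf_i r_i$) and $d(x_i,x_j)\leq r_i-r_j$.

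For the upper bound property I must show $r_i\geq r+d(x_i,x)$ for each fixed $i$. First I would apply the Yoneda limit identity at the particular point $y=x$: since $d(x,x)=0$, it gives $\inf_k\sup_{j\geq k}d(x_j,x)=0$, which is precisely the statement that $d(x_j,x)\to 0$ along the net. Then, for every $j\geq i$, the triangle inequality together with the order estimate yields
\[ d(x_i,x)\leq d(x_i,x_j)+d(x_j,x)\leq (r_i-r)+d(x_j,x),\]
where I use $r_j\geq r$ to pass from $r_i-r_j$ to the bound $r_i-r$. Letting $j$ run through the cofinal set of indices $\geq i$ drives $d(x_j,x)\to 0$, leaving $d(x_i,x)\leq r_i-r$, i.e.\ $(x_i,r_i)\sqsubseteq(x,r)$.

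For minimality, let $(y,s)$ be any upper bound of $\{(x_i,r_i)\}_i$, so that $r_i\geq s+d(x_i,y)$, equivalently $d(x_i,y)\leq r_i-s$, for every $i$. Here I would feed these estimates straight into the Yoneda limit identity evaluated at this same point $y$: since $r_j\leq r_i$ for $j\geq i$ gives $\sup_{j\geq i}d(x_j,y)\leq\sup_{j\geq i}(r_j-s)=r_i-s$, taking the infimum over $i$ produces
\[ d(x,y)=\inf_i\sup_{j\geq i}d(x_j,y)\leq\inf_i(r_i-s)=r-s,\]
which is exactly $(x,r)\sqsubseteq(y,s)$. Combining the two parts shows $(x,r)$ is the join of $\{(x_i,r_i)\}_i$.

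The only delicate point is the asymmetry of $d$: the two halves of the argument control, respectively, the distances $d(x_j,x)$ and $d(x_i,y)$, both of which are distances \emph{out of} members of the net, and it is essential that the Yoneda limit identity, evaluated at the correct target point (namely $y=x$ in the first part and the given upper bound's point in the second), supplies exactly these quantities. Beyond keeping the direction of the metric straight and justifying the single limit passage $d(x_j,x)\to 0$, the computation is routine.
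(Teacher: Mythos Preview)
Your argument is correct. The two halves---verifying that $(x,r)$ is an upper bound via $d(x_j,x)\to 0$ and the triangle inequality, and then that it lies below any upper bound $(y,s)$ via the Yoneda limit identity applied at $y$---are exactly what is needed, and you have handled the asymmetry of $d$ and the monotonicity $r_j\leq r_i$ for $j\geq i$ properly. One minor point worth making explicit is that the infimum $r=\inf_i r_i$ is finite (since the directed set is nonempty and each $r_i<\infty$), so $(x,r)$ genuinely lies in $\mathrm{B}X$; but this is immediate.

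As for comparison with the paper: the paper does not supply its own proof of this lemma. It is simply quoted from \cite[Lemma 7.4.25]{Goubault} and used as a black box in the proof of Theorem~\ref{sandwich}. Your direct verification is the standard one and would serve perfectly well in place of the citation.
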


In particular, for every element $x$ in a metric space $(X,d)$ and every $r\geq 0$, $\{(x,r+1/n)\}_{n\geq1}$ is a directed set in $(\mathrm{B}X,\sqsubseteq)$ with a join $(x,r)$.

\begin{proof}[Proof of Theorem \ref{sandwich}] First, we prove that the $c$-Scott topology is coarser than the generalized Scott topology. It suffices to check that for each Scott weight $\phi$ of $(X,d)$, the set $\{y\mid \phi(y)>0\}$ is generalized Scott open.

Let $\{x_i\}_i$ be a forward Cauchy net and $x$ be a Yoneda limit of $\{x_i\}_i$. Assume that $\phi(x)=r>0$.   By Proposition \ref{order convergence}, $\sup_i\inf_{j\geq i}\phi(x_j)\geq \phi(x)$. So, there is some index $i$ such that $\phi(x_j)\geq 3r/4$ whenever $j\geq i$. Let $\varepsilon=r/2$. For each $j\geq i$ and  $y\in B(x_j,\varepsilon)$, $\phi(y)\geq\phi(x_j)-d(x_j,y)\geq r/4>0$. So, the open ball $B(x_j,\varepsilon)$ is contained in  $\{y\mid \phi(y)>0\}$, showing that $\{y\mid \phi(y)>0\}$ is generalized Scott open.

Next, we prove that the $c$-Scott topology is finer than  the $d$-Scott topology. Given a Scott closed set $F$ in $(\mathrm{B}X,\sqsubseteq)$, define a map $\phi_F:X\lra[0,\infty]$ as follows: let  $\phi_F(x)=\inf\{r\mid (x,r)\in F\}$ if there is some $r\in[0,\infty)$ with $(x,r)\in F$; otherwise  let $\phi_F(x)=\infty$. If $\phi_F(x)<\infty$, then $(x,\phi_F(x))\in F$ since  $(x,\phi_F(x))$ is a join of the directed set $\{(x,\phi_F(x)+1/n)\}_{n\geq 1}$. Thus, we have $\phi_F^{-1}(0)=\eta_X(X)\cap F$. So, in order to prove the conclusion, we only need to show that  for each Scott closed set $F$ in $(\mathrm{B}X,\sqsubseteq)$, $\phi_F$ is a Scott weight of $(X,d)$. We do this in two steps.

\textbf{Step 1}.   $\phi_F$ is a weight. That is, $\phi_F(x)\leq \phi_F(y)+d(x,y)$ for all $x,y$. If $(y,s)\in F$, since $F$ is a lower set and $(x,s+d(x,y))\sqsubseteq(y,s)$, then $(x,s+d(x,y))\in F$, so, $\phi_F(x)\leq s+d(x,y)$. It follows that $\phi_F(x)\leq \phi_F(y)+d(x,y)$.

\textbf{Step 2}.   $\phi_F$ is a Scott weight. Let $\{x_i\}_{i\in I}$ be a forward Cauchy net and $a$ be a Yoneda limit of $\{x_i\}_{i\in I}$. Define a subset $D$ of $(\mathrm{B}X,\sqsubseteq)$ as follows: \[(x,r)\in D \iff \exists i\in I,~ x=x_i ~\text{and}~ \sup_{j\geq i}d(x_i,x_j)\leq r/2.\] For $(x_i,r), (x_j,s)\in D$, let $t=\min\{r/2,s/2\}$. Take some index $k$ such that $i,j\leq k$ and that $d(x_k,x_l)\leq t/2$ whenever $k\leq l$. Then $(x_k,t)\in D$ and it is an upper bound of $(x_i,r)$ and $(x_j,s)$ in $(\mathrm{B}X,\sqsubseteq)$, so, $D$ is a directed set.

We claim that $a$ is a Yoneda limit of the underlying net of $D$.
It suffices to check that for each $y\in X$, the net $\{d(x_i,y)\}_{(x_i,r)\in D}$ converges to $d(a,y)$. We check this in the case that $d(a,y)$ is finite.
Given  $\varepsilon>0$, since $a$ is a Yoneda limit of the forward Cauchy net $\{x_i\}_{i\in I}$,  the net $\{d(x_i,y)\}_{i\in I}$ converges to $d(a,y)$, so  there is some $i\in I$ such that $|d(x_j,y)-d(a,y)|<\varepsilon/2$ and that $d(x_i,x_j)\leq\varepsilon/2$ whenever $i\leq j$. By definition one has  $(x_i,\varepsilon)\in D$. We assert that for all $(x_j,s)\in D$, $|d(x_j,y)-d(a,y)|<\varepsilon$  whenever $(x_i,\varepsilon)\sqsubseteq(x_j,s)$. If $i\leq j$,  this is clear. If $i\not\leq j$, take an upper bound  $(x_k,t)$ of $(x_j,s)$ and $(x_i,\varepsilon)$ in $D$ with $i,j\leq k$, then $|d(x_k,y)-d(a,y)|<\varepsilon/2$ and $d(x_j,x_k)\leq s/2\leq\varepsilon/2$, hence $|d(x_j,y)-d(a,y)|\leq(d(x_j,y)-d(x_k,y)|+|d(x_k,y)-d(a,y)|<\varepsilon$. Therefore, the net $\{d(x_i,y)\}_{(x_i,r)\in D}$ converges to $d(a,y)$.

Let $\beta =\inf_i\sup_{j\geq i}\phi_F(x_j)$. For each $\varepsilon>0$, let \[D_\varepsilon=\{(x_i,r+\beta+\varepsilon)\mid (x_i,r)\in D\}.\] Then $D_\varepsilon$ is a directed set and it is eventually in the set $F$. By Lemma \ref{7.4.25}, $(a, \beta+\varepsilon)$ is a join of $D_\varepsilon$, hence $(a, \beta+\varepsilon)\in F$. Again by Lemma \ref{7.4.25}, we obtain that $(a,\beta)\in F$. Therefore, $\phi_F(a)\leq \beta= \inf_i\sup_{j\geq i}\phi_F(x_j)$, showing that $\phi_F$ is a Scott weight. \end{proof}

Theorem \ref{sandwich} implies, in particular, that for each metric space, the $d$-Scott topology is coarser than the generalized Scott topology. It should be noted that in the case of Yoneda complete metric spaces, a proof of this fact is contained in \cite[Exercise 7.4.51]{Goubault}.

\begin{exmp}\label{c not= d} The $c$-Scott topology is in general different from the $d$-Scott topology. Consider the metric space $(X,d)$ given in \cite[Remark 2.3]{GN}. That is, $X=[0,1]$ and \[d(x,y)=\begin{cases} |x-y|, & x, y\not=0,\\ 0, & y=0,\\ 1, & x=0, y>0.\end{cases}\]

It is not hard to see that $\{(1/2^n, 1/2^n)\}_{n\geq 1}$ is a directed set in $(\mathrm{B}X,\sqsubseteq)$ with a join $(0,0)$. So, the interval $(0,1]$ is not closed in the $d$-Scott topology, because for each Scott closed set $F$ in $(\mathrm{B}X,\sqsubseteq)$, if $(0,1]\subseteq F\cap \eta_X(X)$, then $F$  contains  $\{(1/2^n, 1/2^n)\}_{n\geq 1}$, hence $(0,0)$. But,  $(0,1]$ is closed in the $c$-Scott topology. To see this, notice  that if a forward Cauchy net $\{x_i\}_i$ in $(X,d)$ has a Yoneda limit then $\{x_i\}_i$ is  either an eventually  constant net with value $0$ or a convergent net (in the usual sense) with a limit not $0$.
Define $\phi:X\lra [0,\infty]$ by $\phi(0)=1$ and $\phi(x)=0$ whenever $x>0$. Then $\phi$ is a Scott weight of $(X,d)$ and $\phi^{-1}(0)=(0,1]$, hence $(0,1]$ is closed in the $c$-Scott topology, as desired. \end{exmp} %note that this space is algebraic!

The final result in this section gives a sufficient condition for the $c$-Scott topology of a metric space $(X,d)$ to  equal the $d$-Scott topology. To this end, we need a condition, called the condition (S), for metric spaces.

\begin{defn}[The condition (S), \cite{GN}] A metric space $(X,d)$ is said to satisfy the condition (S) if it satisfies:

(S) For every  directed  set $\{(x_i,r_i)\}_i$ of $(\mathrm{B}X,\sqsubseteq)$ and for every $s\geq 0$, $\{(x_i,r_i)\}_i$ has a join  in $(\mathrm{B}X,\sqsubseteq)$ if and only if   $\{(x_i,r_i+s)\}_i$ has a join in $(\mathrm{B}X,\sqsubseteq)$.    \end{defn}

Metric spaces satisfying the condition (S) are introduced in \cite{GN} as \emph{standard quasi-metric spaces}. In this paper, we do not use the terminology \emph{quasi-metric space}, so, we say that such spaces satisfy the condition (S). It is shown in \cite{GN} that a large class of metric spaces  satisfy the condition (S), including symmetric metric spaces,   Yoneda complete metric spaces, and ordered sets (as metric spaces). A nice property of these  spaces  is that the converse of Lemma \ref{7.4.25} is also true.

\begin{lem}%{\rm(\cite{GN})}
\label{7.4.26} Let $(X,d)$ be a metric space that satisfies the condition {\rm (S)}. If $(x,r)$ is a join of a directed set $\{(x_i,r_i)\}_i$ in $(\mathrm{B}X,\sqsubseteq)$, then $r=\inf_ir_i$ and $x$ is a Yoneda limit of the underlying net $\{x_i\}_i$. \end{lem}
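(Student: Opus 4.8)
The plan is to construct a Yoneda limit of the underlying net by hand, applying condition (S) twice, and then to extract both assertions from Lemma \ref{7.4.25} together with uniqueness of joins. Throughout, write $r^\ast=\inf_ir_i$; this is finite because each $r_i<\infty$, and $(x_i,r_i)\sqsubseteq(x,r)$ gives $r_i\ge r$, so $r\le r^\ast$. The net $\{r_i\}_i$ is non-increasing with limit $r^\ast$, and the underlying net $\{x_i\}_i$ is forward Cauchy (as noted just before Lemma \ref{7.4.25}); hence for each $z\in X$ the net $\{d(x_i,z)\}_i$ is forward Cauchy in $([0,\infty],d_R)$ and converges to $\ell(z):=\inf_i\sup_{j\ge i}d(x_j,z)$. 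The one computational fact I will keep reusing is that, for fixed $z$, the net $i\mapsto r_i-d(x_i,z)$ is non-increasing (immediate from $(x_i,r_i)\sqsubseteq(x_j,r_j)$ and the triangle inequality), so that $\inf_i(r_i-d(x_i,z))=r^\ast-\ell(z)$.

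\emph{Step 1: a candidate limit.} Since $r^\ast\le r_i$ for all $i$, the family $D'=\{(x_i,r_i-r^\ast)\}_i$ is a directed subset of $\mathrm{B}X$ — it is order-isomorphic to $\{(x_i,r_i)\}_i$ by subtraction of the constant $r^\ast$ — and it differs from $\{(x_i,r_i)\}_i$ by the shift $r^\ast$, so condition (S) yields a join $(y,\rho)$ of $D'$. Comparing $(y,\rho)$ with the elements of $D'$ gives $\rho\le\inf_i(r_i-r^\ast)=0$, so $\rho=0$, and $d(x_i,y)\le r_i-r^\ast\to0$; hence $\ell(z)=\lim_id(x_i,z)\le d(y,z)$ for every $z$, and in particular $\ell(y)=0$. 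Moreover, by the reused fact a ball $(z,t)$ bounds $D'$ iff $t\le\inf_i((r_i-r^\ast)-d(x_i,z))=-\ell(z)$, so the upper bounds of $D'$ are exactly the balls $(z,0)$ with $\ell(z)=0$; as $(y,0)$ is the least of these, $d(y,z)=0$ whenever $\ell(z)=0$. Thus $y$ is a $\leq_d$-least element of $Z_0:=\{z\in X\mid\ell(z)=0\}$.

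\emph{Step 2: the reverse estimate $d(y,z)\le\ell(z)$.} This is the heart of the argument and the only place where condition (S) is genuinely required. Fix $z$ with $c:=\ell(z)<\infty$ (the case $c=\infty$ is vacuous). Apply (S) again, now to the upward shift $D''=\{(x_i,(r_i-r^\ast)+c)\}_i$ of $D'$: it has a join $(b,\beta)$. Since $(x_i,(r_i-r^\ast)+c)\sqsubseteq(y,c)$ is equivalent to $(x_i,r_i-r^\ast)\sqsubseteq(y,0)$, the ball $(y,c)$ is an upper bound of $D''$; comparing $(b,\beta)$ with the elements of $D''$ gives $\beta\le c$, and $(b,\beta)\sqsubseteq(y,c)$ then gives $\beta=c$ and $d(b,y)=0$. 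Also $d(x_i,b)\le r_i-r^\ast\to0$, so $\ell(b)=0$, i.e.\ $b\in Z_0$, whence $d(y,b)=0$ by Step 1. On the other hand, the reused fact gives $d(x_i,z)\le r_i-(r^\ast-c)=(r_i-r^\ast)+c$ for all $i$, so $(z,0)$ is an upper bound of $D''$, and therefore $(b,c)\sqsubseteq(z,0)$, i.e.\ $d(b,z)\le c$. Combining, $d(y,z)\le d(y,b)+d(b,z)\le c=\ell(z)$.

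\emph{Step 3: conclusion, and the obstacle.} Steps 1 and 2 give $d(y,z)=\ell(z)$ for all $z$, i.e.\ $y$ is a Yoneda limit of $\{x_i\}_i$; by Lemma \ref{7.4.25}, $(y,r^\ast)$ is then a join of $\{(x_i,r_i)\}_i$. Since $(x,r)$ is also a join, $(x,r)\sqsubseteq(y,r^\ast)$ and $(y,r^\ast)\sqsubseteq(x,r)$, which unfold to $r\ge r^\ast+d(x,y)$ and $r^\ast\ge r+d(y,x)$; adding these forces $d(x,y)=d(y,x)=0$, and then $r\ge r^\ast$ together with $r\le r^\ast$ gives $r=r^\ast$. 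Finally $d(x,z)\le d(x,y)+d(y,z)=\ell(z)$ and $\ell(z)=d(y,z)\le d(y,x)+d(x,z)=d(x,z)$, so $x$ too is a Yoneda limit. The genuine obstacle is Step 2: the join property of $\{(x_i,r_i)\}_i$ by itself does not give $d(y,z)\le\ell(z)$ — without condition (S) the shifted family $D''$ need not have a join, and the lemma indeed fails — and what makes it go through is precisely the second use of (S), the monotonicity of $i\mapsto r_i-d(x_i,z)$, and the $Z_0$-minimality of $y$ established in Step 1.
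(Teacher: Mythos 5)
Your argument is correct, but it takes a genuinely different route from the paper, which offers no direct proof at all: it simply cites \cite[Lemma 7.4.26]{Goubault}, remarking that the proof there only requires that a join of $\{(x_i,r_i)\}_i$ forces a join of $\{(x_i,r_i+s)\}_i$ for every $s\geq -\inf_i r_i$. Your two invocations of condition (S) --- passing from $\{(x_i,r_i)\}_i$ down to $D'=\{(x_i,r_i-r^\ast)\}_i$ (shift $s=-r^\ast$) and from $D'$ up to $D''=\{(x_i,(r_i-r^\ast)+c)\}_i$ (shift $s=c\geq 0$) --- use exactly this weaker one-directional property, so your self-contained proof in effect substantiates the paper's remark. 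The mechanism checks out: the monotonicity of $i\mapsto r_i-d(x_i,z)$ does give $\inf_i\bigl(r_i-d(x_i,z)\bigr)=r^\ast-\ell(z)$ even with infinite values (if $d(x_i,z)=\infty$ then $d(x_j,z)=\infty$ for all $j\geq i$ because $d(x_i,x_j)\leq r_i-r_j<\infty$, so $\ell(z)=\infty$ and such $z$ are excluded from Step 2 anyway); the join of $D'$ has radius $0$ and its centre $y$ is squeezed into $d(y,z)=\ell(z)$ by the two estimates of Steps 1 and 2 (the equality $\beta=c$, obtained from the upper bound $(y,c)$, is genuinely needed to conclude $\ell(b)=0$, while the byproduct $d(b,y)=0$ is never used); and Step 3 correctly refrains from assuming $\sqsubseteq$ antisymmetric (i.e.\ $d$ separated), transferring the Yoneda-limit property from $y$ to $x$ through $d(x,y)=d(y,x)=0$. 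Your closing claim that the statement fails without (S) is borne out by Example \ref{c not= d}: there $\{(1/2^n,1/2^n)\}_{n\geq 1}$ has join $(0,0)$, yet $0$ is not a Yoneda limit of $\{1/2^n\}_n$, since $d(0,y)=1\neq y=\ell(y)$ for $0<y<1$. What your route buys over the paper's citation is self-containment and an explicit identification of the limit as the centre of the join of the down-shifted family; what the citation buys is brevity and the reader's reliance on the argument already recorded in \cite{Goubault}.
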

\begin{proof}The proof is contained in \cite[Lemma 7.4.26]{Goubault}, because the proof therein only requires that if a directed set $\{(x_i,r_i)\}_i$ has a join  in $(\mathrm{B}X,\sqsubseteq)$ then so does $\{(x_i,r_i+s)\}_i$ for every $s\geq -\inf_ir_i$. \end{proof}

\begin{prop}For each metric space  that satisfies the condition {\rm (S)}, the $c$-Scott topology is equal to the $d$-Scott topology. \end{prop}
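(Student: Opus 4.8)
The plan is to combine the statement with Theorem~\ref{sandwich}, which already shows that for \emph{every} metric space the $c$-Scott topology is finer than the $d$-Scott topology. So, under the condition (S), the only thing left to establish is the reverse inclusion: every $c$-Scott closed subset of $(X,d)$ is $d$-Scott closed. Recall that the $c$-Scott closed sets are exactly the sets $\phi^{-1}(0)$ with $\phi$ a Scott weight of $(X,d)$, while a subset $C\subseteq X$ is $d$-Scott closed precisely when $C=\eta_X^{-1}(F)=\{x\in X\mid(x,0)\in F\}$ for some Scott closed set $F$ of $(\mathrm{B}X,\sqsubseteq)$, since the $d$-Scott topology is the topology inherited along the injection $\eta_X$.

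Given a Scott weight $\phi$, I would take the evident candidate
\[F_\phi=\{(x,r)\in\mathrm{B}X\mid\phi(x)\leq r\}\]
and verify that it is Scott closed in $(\mathrm{B}X,\sqsubseteq)$ and that $\eta_X^{-1}(F_\phi)=\phi^{-1}(0)$. The latter is immediate, since $(x,0)\in F_\phi$ iff $\phi(x)\leq 0$ iff $\phi(x)=0$. That $F_\phi$ is a lower set follows at once from the weight inequality: if $(x,r)\sqsubseteq(y,s)$, i.e. $r\geq s+d(x,y)$, and $\phi(y)\leq s$, then $\phi(x)\leq\phi(y)+d(x,y)\leq s+d(x,y)\leq r$.

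The step with real content is showing $F_\phi$ is closed under joins of directed subsets (when such joins exist in $(\mathrm{B}X,\sqsubseteq)$), and this is exactly where the condition (S) enters. Let $\{(x_i,r_i)\}_i\subseteq F_\phi$ be directed with a join $(x,r)$ in $(\mathrm{B}X,\sqsubseteq)$, the index set ordered by $i\leq j\iff(x_i,r_i)\sqsubseteq(x_j,r_j)$. By Lemma~\ref{7.4.26} (which requires (S)), $r=\inf_i r_i$ and $x$ is a Yoneda limit of the underlying net $\{x_i\}_i$, which is forward Cauchy. For $i\leq j$ the defining inequality of $\sqsubseteq$ gives $r_i\geq r_j$, so $\sup_{j\geq i}\phi(x_j)\leq\sup_{j\geq i}r_j\leq r_i$ (using $(x_j,r_j)\in F_\phi$), whence $\inf_i\sup_{j\geq i}\phi(x_j)\leq\inf_i r_i=r$. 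Since $\phi$ is a Scott weight and $x$ is a Yoneda limit of $\{x_i\}_i$, $\phi(x)\leq\inf_i\sup_{j\geq i}\phi(x_j)\leq r$, i.e. $(x,r)\in F_\phi$. Hence $F_\phi$ is Scott closed, $\phi^{-1}(0)$ is $d$-Scott closed, and together with Theorem~\ref{sandwich} this yields the equality of the two topologies.

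The main obstacle is precisely that directed-join step. Without (S) one only has the ``if'' half, Lemma~\ref{7.4.25}, which does not let one conclude that an arbitrary join in $(\mathrm{B}X,\sqsubseteq)$ of a directed family is realized by a Yoneda limit of the underlying net; the condition (S), through Lemma~\ref{7.4.26}, supplies exactly this converse, and the construction $F_\phi$ collapses without it.
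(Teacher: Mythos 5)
Your proposal is correct and follows essentially the same route as the paper: by Theorem~\ref{sandwich} only the reverse inclusion remains, and you establish it with the same candidate set $\{(x,r)\in\mathrm{B}X\mid\phi(x)\leq r\}$ for a Scott weight $\phi$, invoking Lemma~\ref{7.4.26} (where the condition (S) enters) to show it is Scott closed in $(\mathrm{B}X,\sqsubseteq)$ and restricts along $\eta_X$ to $\phi^{-1}(0)$. Your explicit verification of the lower-set property and of $\eta_X^{-1}(F_\phi)=\phi^{-1}(0)$ fills in details the paper leaves implicit, but the argument is the same.
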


\begin{proof}We only need to check that for a metric space $(X,d)$   satisfying the condition (S), the $d$-Scott topology is finer than the $c$-Scott topology.

 For a Scott weight $\phi$ of $(X,d)$, define a subset $\mathrm{B}\phi$   of $(\mathrm{B}X,\sqsubseteq)$   by \[\mathrm{B}\phi=\{(x,r)\in \mathrm{B}X\mid \phi(x)\leq r\}.\]
It is clear that $\phi^{-1}(0)=\eta_X(X)\cap\mathrm{B}\phi$. So, if we can show that   $\mathrm{B}\phi$ is Scott closed in $(\mathrm{B}X,\sqsubseteq)$, then the conclusion   follows.

Assume that $\{(x_i,r_i)\}_i$ is a directed set in $\mathrm{B}\phi$ and $(x,r)$ is a join of $\{(x_i,r_i)\}_i$ in $(\mathrm{B}X,\sqsubseteq)$. By Lemma \ref{7.4.26}, $r=\inf_ir_i$ and $x$ is a Yoneda limit of the forward Cauchy net $\{x_i\}_i$. Then  \[\phi(x)\leq\inf_i\sup_{j\geq i}\phi(x_j)\leq\inf_i\sup_{j\geq i}r_j=r,\] showing that $(x,r)\in\mathrm{B}\phi$, hence $\mathrm{B}\phi$ is Scott closed.
\end{proof}

\begin{rem} We don't know whether the $c$-Scott topology coincides with the generalized Scott topology for every metric space. If the answer is yes, then the generalized Scott topology is equal to the $d$-Scott topology for all metric spaces that satisfy the condition (S). This should be compared with  \cite[Exercise 7.4.69]{Goubault} which says that for a Yoneda complete algebraic metric space, the generalized Scott topology is equal to the $d$-Scott topology (also see Corollary \ref{c=g} below). If the answer is no, then the $c$-Scott topology is a new  and a natural topology for metric spaces.   \end{rem}

\section{Scott distance on algebraic metric spaces}
In this section we show that for an algebraic metric space $(X,d)$, the Scott distance of $(X,d)$ is determined  by its compact elements, and it coincides with the approach distance introduced in  Windels \cite{Windels}.

\begin{defn}\label{finite-definition} (\cite{BvBR1998,Goubault}) An element $a$ in a metric space $(X,d)$ is compact if for each forward Cauchy net $\{x_i\}_i$ with a Yoneda limit $x$, $d(a,x)=\inf_i\sup_{j\geq i}d(a,x_j)$.  A metric space $(X,d)$ is algebraic if  every element in $(X,d)$ is a Yoneda limit of a forward Cauchy net consisting of compact elements. \end{defn}

\begin{exmp}(\cite{Goubault}) Every element in $([0,\infty],d_L)$ is compact, hence $([0,\infty],d_L)$ is algebraic.  Every element except $\infty$ is  compact in $([0,\infty],d_R)$ and $\infty$ is the Yoneda limit of the forward Cauchy sequence $\{n\}$, so,  $([0,\infty],d_R)$ is algebraic. \end{exmp}

\begin{prop}\label{finite} An element $a$ in a metric space $(X,d)$ is compact if and only if for each flat weight $\phi$ with a colimit it holds that $d(a,\colim\phi)=\phi(a).$\end{prop}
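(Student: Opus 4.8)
The plan is to reduce the statement entirely to the dictionary between forward Cauchy nets and flat weights already established in the section, namely Proposition \ref{flat weight} (a weight $\phi$ is flat iff $\phi=\inf_i\sup_{j\geq i}d(-,x_j)$ for some forward Cauchy net $\{x_i\}_i$) together with Proposition \ref{Yoneda limit=colimit} (for such a $\phi$, the colimits of $\phi$ are exactly the Yoneda limits of $\{x_i\}_i$). Under this translation, the two conditions ``$d(a,x)=\inf_i\sup_{j\geq i}d(a,x_j)$ for all forward Cauchy nets with Yoneda limit $x$'' and ``$d(a,\colim\phi)=\phi(a)$ for all flat weights with a colimit'' will be seen to say the same thing, once one observes that $\phi(a)=\inf_i\sup_{j\geq i}d(a,x_j)$ by definition of $\phi$.

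For necessity, assume $a$ is compact and let $\phi$ be a flat weight of $(X,d)$ admitting a colimit. By Proposition \ref{flat weight} there is a forward Cauchy net $\{x_i\}_i$ with $\phi=\inf_i\sup_{j\geq i}d(-,x_j)$; in particular $\phi(a)=\inf_i\sup_{j\geq i}d(a,x_j)$. By Proposition \ref{Yoneda limit=colimit}, any colimit $x$ of $\phi$ is a Yoneda limit of $\{x_i\}_i$, so compactness of $a$ gives $d(a,x)=\inf_i\sup_{j\geq i}d(a,x_j)=\phi(a)$, as required.

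For sufficiency, assume the stated colimit condition and let $\{x_i\}_i$ be a forward Cauchy net with Yoneda limit $x$. By the implication $(3)\Rightarrow(1)$ of Proposition \ref{flat weight}, the weight $\phi=\inf_i\sup_{j\geq i}d(-,x_j)$ is flat, and by Proposition \ref{Yoneda limit=colimit}, $x$ is a colimit of $\phi$; thus $\phi$ is a flat weight with a colimit. The hypothesis therefore yields $d(a,x)=d(a,\colim\phi)=\phi(a)=\inf_i\sup_{j\geq i}d(a,x_j)$, which is precisely the defining condition for $a$ to be compact.

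The only point that needs a word of care — and the nearest thing here to an obstacle, though it is minor — is that colimits of a weight are unique only up to the equivalence $d(x,x')=d(x',x)=0$, so the symbol $\colim\phi$ in the statement must be read as ``any colimit of $\phi$''; the displayed equalities are then unambiguous because $d(a,x)=d(a,x')$ whenever $d(x,x')=d(x',x)=0$, by the triangle inequality. Everything else is a direct application of the two cited propositions, so no further computation is needed.
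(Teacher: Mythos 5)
Your proposal is correct and follows essentially the same route as the paper's own proof: translate flat weights with colimits into forward Cauchy nets with Yoneda limits via Propositions \ref{flat weight} and \ref{Yoneda limit=colimit}, then observe that $\phi(a)=\inf_i\sup_{j\geq i}d(a,x_j)$ makes the two defining conditions identical. The extra remark about $\colim\phi$ being well defined only up to the equivalence $d(x,x')=d(x',x)=0$ is a harmless clarification the paper leaves implicit.
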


\begin{proof}
Let $\phi$ be a  flat  weight of $(X,d)$ with a colimit. It follows from Proposition \ref{flat weight} and Proposition \ref{Yoneda limit=colimit} that there exists a forward Cauchy net $\{x_i\}_i$ in $(X,d)$ such that $\phi=\inf_i\sup_{j\geq i}d(-,x_j)$ and that $\colim\phi$ is a Yoneda limit of $\{x_i\}_i$. Then $$ d(a,\colim\phi)= \inf_i\sup_{j\geq i}d(a,x_j) = \phi(a),$$ proving the necessity. Conversely, suppose $\{x_i\}_i$  is a forward Cauchy net with a Yoneda limit $x$. Then $\phi=\inf_i\sup_{j\geq i}d(-,x_j)$ is a flat weight of $(X,d)$ having  $x$ as a colimit. Hence $$d(a,x) = \phi(a)= \inf_i\sup_{j\geq i}d(a,x_j),$$   proving the sufficiency.
\end{proof}

\begin{exmp}
For a metric space $(X,d)$, consider the subspace  $(\mathcal{F}X,\overline{d})$ of $(\CP X,\overline{d})$, where $\mathcal{F}X=\{\phi\in \CP X\mid \phi  {\rm~is~flat}\}$. This separated metric space is  a metric version of the partially ordered set of ideals in an ordered set.   $(\mathcal{F}X,\overline{d})$ is both Yoneda complete and algebraic.\footnote{In the language of enriched category theory \cite{KS05}, $(\mathcal{F}X,\overline{d})$ is the free cocompletion of $(X,d)$ with respect to the class of flat weights. Moreover, every Yoneda complete, algebraic, and separated metric space is of this form.}

For each forward Cauchy net $\{\phi_i\}_i$ in $(\mathcal{F}X,\overline{d})$, the weight   $\phi=\inf_i\sup_{j\geq i}\phi_i$ is flat by \cite[Theorem 7.15]{SV2005},  hence it is a Yoneda limit of $\{\phi_i\}_i$ in $(\mathcal{F}X,\overline{d})$ by  Example \ref{Yoneda limits in PX}. Therefore, $(\mathcal{F}X,\overline{d})$ is Yoneda complete.

Given a flat weight  $\phi$ of $(X,d)$, by Proposition \ref{flat weight}, there is a forward Cauchy net $\{x_i\}_i$ in $(X,d)$ such that $\phi=\inf_i\sup_{j\geq i}d(-,x_i)$. Then $\phi$ is a Yoneda limit of the forward Cauchy net $\{\y(x_i)\}_i$ in $(\mathcal{F}X,\overline{d})$. So, in order to see that $(\mathcal{F}X,\overline{d})$ is   algebraic, it suffices to verify that  for all $x\in X$, $\y(x)$ is compact in $(\mathcal{F}X,\overline{d})$. Let $\{\phi_i\}_i$ be a forward Cauchy net  in $(\mathcal{F}X,\overline{d})$. Since the Yoneda limit of  $\{\phi_i\}_i$ in $(\mathcal{F}X,\overline{d})$ is  given by $\phi=\inf_i\sup_{j\geq i}\phi_j$, then
\[\overline{d}(\y(x),\phi)=\phi(x)=\inf_i\sup_{j\geq i}\phi_j(x)= \inf_i\sup_{j\geq i}\overline{d}(\y(x), \phi_j),\]
hence $\y(x)$ is compact.
\end{exmp}

It is easily seen that an element  $x$ in an ordered set $(P,\leq)$ is   compact if and only if the upper set $P\setminus\ua\!x$ is Scott closed. The following conclusion is a metric version of this fact.
\begin{lem}\label{compact element by regular} An element $b$ in a metric space $(X,d)$ is compact if and only if  $r\ominus d(b,-)$ is a Scott weight for all $r\in[0,\infty]$. \end{lem}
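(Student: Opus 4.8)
The statement is a metric analogue of the order-theoretic fact that an element is compact precisely when the complement of its principal up-set is Scott closed, and the plan is to verify it by testing both conditions against forward Cauchy nets. First I would record the harmless preliminary that $r\ominus d(b,-)$ is always a weight, for any $b\in X$ and any $r\in[0,\infty]$: indeed $d(b,-)$ is a coweight (this is exactly the triangle inequality $d(b,y)\le d(b,x)+d(x,y)$), and a routine truncation check shows $r\ominus\psi$ is a weight whenever $\psi$ is a coweight. So the only issue is when these weights are \emph{Scott} weights. Unwinding the definition (and recalling that for Scott weights the defining inequality is automatically an equality), $r\ominus d(b,-)$ is a Scott weight precisely when, for every forward Cauchy net $\{x_i\}_i$ and every Yoneda limit $x$ of it, \[\inf_i\sup_{j\ge i}\bigl(r\ominus d(b,x_j)\bigr)\ \ge\ r\ominus d(b,x).\]

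The key step is to evaluate the left-hand side. Since $d(b,-):(X,d)\lra([0,\infty],d_L)$ is non-expansive and non-expansive maps preserve forward Cauchy nets, $\{d(b,x_i)\}_i$ is a forward Cauchy net in $([0,\infty],d_L)$; by Example \ref{d_L} it converges in the usual sense to some $\ell\in[0,\infty]$, with $\inf_i\sup_{j\ge i}d(b,x_j)=\sup_i\inf_{j\ge i}d(b,x_j)=\ell$. Because $t\mapsto r\ominus t$ is continuous and non-increasing on $[0,\infty]$, it commutes with these $\inf$-$\sup$ expressions, so $\inf_i\sup_{j\ge i}\bigl(r\ominus d(b,x_j)\bigr)=r\ominus\ell$. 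Hence ``$r\ominus d(b,-)$ is a Scott weight for every $r$'' says: for every forward Cauchy net with Yoneda limit $x$ (with $\ell$ the limit of $\{d(b,x_i)\}_i$) one has $r\ominus\ell\ge r\ominus d(b,x)$ for all $r\in[0,\infty]$. Since $\ominus$ reverses order, testing any single finite $r>d(b,x)$ when $d(b,x)<\infty$ forces $\ell\le d(b,x)$ (and when $d(b,x)=\infty$ this inequality is trivial, and the Scott condition holds automatically). Thus the hypothesis is equivalent to: $\ell\le d(b,x)$ for every forward Cauchy net with Yoneda limit $x$.

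Finally, the reverse inequality $d(b,x)\le\ell$ is automatic for every $b$: taking $y=x$ in the definition of Yoneda limit gives $\inf_i\sup_{j\ge i}d(x_j,x)=d(x,x)=0$, so for each $\varepsilon>0$ there is an index $i_0$ with $d(x_j,x)<\varepsilon$ for all $j\ge i_0$; then $d(b,x)\le d(b,x_j)+d(x_j,x)<d(b,x_j)+\varepsilon$ for such $j$, whence $d(b,x)\le\inf_{j\ge i_0}d(b,x_j)+\varepsilon\le\ell+\varepsilon$, and $\varepsilon\to0$ gives $d(b,x)\le\ell$. Combining the two halves, $r\ominus d(b,-)$ is a Scott weight for every $r$ if and only if $d(b,x)=\ell=\inf_i\sup_{j\ge i}d(b,x_j)$ for every forward Cauchy net $\{x_i\}_i$ with Yoneda limit $x$, which is exactly the definition of $b$ being compact. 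I do not expect a genuine obstacle beyond the bookkeeping with $\ominus$ and the point $\infty$; the one thing worth stressing is that the ``hard'' inequality defining compactness, $d(b,x)\le\inf_i\sup_{j\ge i}d(b,x_j)$, comes for free, so only the order-reversed inequality has to be extracted from the Scott-weight hypothesis. Alternatively, one can run the whole argument through Proposition \ref{finite} and Proposition \ref{Scott via flat}, using the identity $\overline{d}\bigl(\psi,\,r\ominus d(b,-)\bigr)=r\ominus\psi(b)$ for weights $\psi$ (which follows from the weight inequality $\psi(b)\le\psi(x)+d(b,x)$ together with $\inf_x(\psi(x)+d(b,x))=\psi(b)$), thereby replacing forward Cauchy nets by flat weights and their colimits.
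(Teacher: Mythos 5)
Your proposal is correct, and it takes a genuinely different route from the paper. The paper never touches forward Cauchy nets directly: for necessity it computes, for an arbitrary flat weight $\psi$, the identity $\overline{d}\bigl(\psi,\,r\ominus d(b,-)\bigr)=r\ominus\psi(b)$ and combines it with Proposition \ref{finite} (compactness means $\psi(b)=d(b,\colim\psi)$ for flat $\psi$); for sufficiency it cleverly specializes to the single value $r=\psi(b)$, so that $\overline{d}\bigl(\psi, r\ominus d(b,-)\bigr)=0$, and invokes Proposition \ref{Scott via flat} to get $d(b,\colim\psi)\geq\psi(b)$ --- exactly the alternative you sketch in your last sentence. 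Your main argument instead unwinds both ``Scott weight'' and ``compact'' at the level of nets, reducing everything to the convergence of $\{d(b,x_i)\}_i$ in $([0,\infty],d_L)$ plus the automatic inequality $d(b,x)\leq\inf_i\sup_{j\geq i}d(b,x_j)$, which you prove correctly. The paper's route buys uniformity in $r$ (including $r=\infty$) and stays in the flat-weight/colimit language reused in Theorem \ref{algebraic distance}; yours is more elementary and makes the analogy with ``$P\setminus{\uparrow}x$ Scott closed'' transparent, at the cost of endpoint bookkeeping. On that bookkeeping: your one inaccurate assertion is that $t\mapsto r\ominus t$ is continuous on $[0,\infty]$ --- this fails for $r=\infty$, where the map jumps at $t=\infty$, so the identity $\inf_i\sup_{j\geq i}\bigl(r\ominus d(b,x_j)\bigr)=r\ominus\ell$ is not justified by continuity alone in that case. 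It does no harm: the direction extracting compactness uses only finite $r$, and for $r=\infty$ the Scott inequality follows from the dichotomy you already cite from Example \ref{d_L} (a forward Cauchy net in $([0,\infty],d_L)$ is either eventually $\infty$, giving both sides $0$, or eventually finite with finite limit, giving left-hand side $\infty$), so a one-line patch closes the point.
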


\begin{proof} Suppose $b$ is a compact element of $(X,d)$. It is easy to verify that  $\phi=r\ominus d(b,-)$ is a weight of $(X,d)$, so, it remains to check that for any flat weight $\psi$, $\overline{d}(\psi,\phi)\geq \phi(\colim\psi)$  whenever $\colim\psi$ exists. Since $b$ is compact,  we have $\psi(b)=d(b,\colim\psi)$ by Proposition \ref{finite}. Then
\begin{align*}
  \overline{d}(\psi,\phi)&=\sup_{x\in X}(\phi(x)\ominus \psi(x)) \\
   &=\sup_{x\in X} \big((r\ominus d(b,x))\ominus \psi(x)\big)\\
   &=\sup_{x\in X}\big(r \ominus (d(b,x)+\psi(x))\big)\\
   &= r \ominus \psi(b) \\
   &=  r \ominus d(b,\colim\psi) \\
   &=\phi(\colim\psi).
\end{align*}

Conversely, let $b$ be an element such that $r\ominus d(b,-)$ is a Scott weight for all $r\in[0,\infty]$.  We show that $b$ is compact. By Proposition \ref{finite}, it suffices to check that for every flat weight $\psi$, $\psi(b)=d(b,\colim\psi)$ whenever $\colim\psi$ exists. Since $\colim\y(b)=b$, we have   \[\psi(b)= \overline{d}(\y(b),\psi)\geq  d(b,\colim\psi).\] Let $r=\psi(b)$. Since $r\ominus d(b,-)$ is a Scott weight and \[\overline{d}(\psi,r\ominus d(b,-))=\sup_{x\in X}\big((r\ominus d(b,x))\ominus\psi(x)\big)=r\ominus\psi(b)=0,\] it follows that \[0\geq r\ominus d(b,\colim\psi),\] hence $\psi(b)=r\leq d(b,\colim\psi)$. \end{proof}

\begin{thm}\label{algebraic distance} Let $(X,d)$ be an algebraic metric space and $B$ be the set of compact elements of $(X,d)$. Then \begin{equation}\label{windels}\si(x,A)=\sup_{b\in B}\Big(\inf_{a\in A}d(b,a)\ominus d(b,x)\Big) \end{equation} for every nonempty subset $A\subseteq X$ and   $x\in X$. Hence the Scott distance coincides with the approach distance given in  Windels \cite{Windels}.  \end{thm}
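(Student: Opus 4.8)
The plan is to prove the identity \eqref{windels} by establishing the two inequalities between $\si(x,A)$ and $\sup_{b\in B}\big(\inf_{a\in A}d(b,a)\ominus d(b,x)\big)$ separately, after which the concluding sentence is immediate: the right‑hand side of \eqref{windels} is verbatim the approach distance that Windels \cite{Windels} assigned to an algebraic metric space, so once the formula is verified there is nothing left to do.

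For the inequality $\si(x,A)\ge\sup_{b\in B}\big(\inf_{a\in A}d(b,a)\ominus d(b,x)\big)$ I would argue pointwise in $b$. Fix a compact element $b$ and set $r_b:=\inf_{a\in A}d(b,a)\in[0,\infty]$. By Lemma \ref{compact element by regular} the function $\phi_b:=r_b\ominus d(b,-)$ is a Scott weight, and for every $a\in A$ one has $\phi_b(a)=r_b\ominus d(b,a)=0$ since $r_b\le d(b,a)$. Hence $\phi_b$ is one of the Scott weights appearing in the supremum defining $\si(x,A)$, so $\si(x,A)\ge\phi_b(x)=\inf_{a\in A}d(b,a)\ominus d(b,x)$; taking the supremum over $b\in B$ gives the inequality.

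For the reverse inequality it suffices to check that $\phi(x)\le\sup_{b\in B}\big(\inf_{a\in A}d(b,a)\ominus d(b,x)\big)$ for an arbitrary Scott weight $\phi$ with $\phi\equiv 0$ on $A$. Since $\phi$ is a weight and $\phi(a)=0$, I get $\phi(b)\le\inf_{a\in A}d(b,a)$ for every $b\in X$, so by monotonicity of $r\mapsto r\ominus s$ it is enough to prove $\phi(x)\le\sup_{b\in B}\big(\phi(b)\ominus d(b,x)\big)$. This is where algebraicity enters: write $x$ as a Yoneda limit of a forward Cauchy net $\{b_i\}_i$ of compact elements. Because $\phi$ is a Scott weight, $\phi\colon(X,d)\to([0,\infty],d_R)$ is Yoneda continuous (Proposition \ref{order convergence}), so $\phi(x)$ is a Yoneda limit of $\{\phi(b_i)\}_i$, and since forward Cauchy nets in $([0,\infty],d_R)$ converge in the usual sense (Example \ref{d_L}) we get $\phi(b_i)\to\phi(x)$. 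Taking $y=x$ in the definition of Yoneda limit gives $\inf_i\sup_{j\ge i}d(b_j,x)=d(x,x)=0$, i.e. $d(b_i,x)\to0$. Consequently $\phi(b_i)\ominus d(b_i,x)\to\phi(x)\ominus0=\phi(x)$, and since every $b_i$ lies in $B$ we conclude $\sup_{b\in B}\big(\phi(b)\ominus d(b,x)\big)\ge\sup_i\big(\phi(b_i)\ominus d(b_i,x)\big)\ge\phi(x)$. Taking the supremum over all admissible $\phi$ yields the desired inequality.

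I expect the second inequality to be the only real obstacle, and within it the delicate point is the limit bookkeeping: one must combine the convergence $\phi(b_i)\to\phi(x)$ (which relies on $\phi$ being a Scott weight, not merely a weight) with $d(b_i,x)\to0$, and check that the truncated subtraction $\ominus$ respects these limits, including the degenerate case $\phi(x)=\infty$, where $\phi(b_i)\ominus d(b_i,x)\ge\phi(b_i)-1\to\infty$ as soon as $d(b_i,x)<1$. The first inequality is essentially bookkeeping once Lemma \ref{compact element by regular} is in hand, and the final sentence of the statement is then a tautology about Windels' definition.
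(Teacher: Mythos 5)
Your proposal is correct and follows essentially the same route as the paper: the inequality $\geq$ comes from Lemma \ref{compact element by regular} (the Scott weights $r\ominus d(b,-)$ for compact $b$), and the inequality $\leq$ uses algebraicity to express $x$ as a Yoneda limit of a forward Cauchy net of compact elements, the Scott-weight property of $\phi$, and the bound $\phi(b)\leq\inf_{a\in A}d(b,a)$. The only difference is cosmetic: you phrase the final estimate via convergence of $\phi(b_i)$ and $d(b_i,x)$ in $[0,\infty]$, where the paper runs an explicit $\varepsilon$--$K$ argument, and your handling of the limits (including the $\phi(x)=\infty$ case) is sound.
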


\begin{proof}Write $\si$ for the Scott  distance of $(X,d)$. For each $b\in B$ and $r\in[0,\infty]$, $ r\ominus d(b,-)$ is a regular function of $(X,\sigma)$ by Lemma \ref{compact element by regular}, so, \[\phi=\sup_{b\in B}\Big(\inf_{a\in A}d(b,a)\ominus d(b,x)\Big)\] is a regular function of $(X,\sigma)$.  Since $\phi(a)=0$ for all $a\in A$, it follows that \[\si(x,A)\geq\sup_{b\in B}\Big(\inf_{a\in A}d(b,a)\ominus d(b,x)\Big).\] To see the converse inequality,
we  only need to show  that for each Scott weight $\phi$ of $(X,d)$, if  $\phi(a)=0$ for all $a\in A$, then for all $x\in X$, \[\phi(x)\leq\sup_{b\in B}\Big(\inf_{a\in A}d(b,a)\ominus d(b,x)\Big). \]

Since $(X,d)$ is algebraic, there is  a forward Cauchy net $\{x_i\}_i$ in $B$ with $x$ as a Yoneda limit. Then, by Proposition \ref{Yoneda limit=colimit}, $x$ is a colimit of the flat weight $\inf_i\sup_{j\geq i}d(-,x_j)$. Since $\phi$ is a Scott weight,  one has \begin{align*}\phi(x)&\leq \overline{d}(\inf_i\sup_{j\geq i}d(-,x_j),\phi) \leq \sup_i\inf_{j\geq i} \overline{d} (d(-, x_j),\phi) =\sup_i\inf_{j\geq i}\phi(x_j).\end{align*} Thus, it suffices to show that  \[  \sup_{b\in B}\Big(\inf_{a\in A}d(b,a)\ominus d(b,x)\Big) \geq \sup_i\inf_{j\geq i}  \phi(x_j).\]

For each $\varepsilon>0$ and  $K\in[0,\infty)$ with $K\leq \sup_i\inf_{j\geq i}\phi(x_j)$, since $x$ is a Yoneda limit of $\{x_i\}_i$, there exists an index $k$ such that $d(x_k,x)\leq\varepsilon$  and  $\phi(x_k)\geq K-\varepsilon$.
Since $\phi$ is a weight,   for all $a\in A$,  \[\phi(x_k)\leq d(x_k,a)+\phi(a)=d(x_k,a), \]   hence  \[\inf_{a\in A}d(x_k,a)\ominus d(x_k,x)\geq \phi(x_k)-d(x_k,x)\geq K-2\varepsilon.\]  By arbitrariness of $K$ and $\varepsilon$, we obtain that \[  \sup_{b\in B}\Big(\inf_{a\in A}d(b,a)\ominus d(b,x)\Big) \geq \sup_i\inf_{j\geq i}  \phi(x_j).\]
This completes the proof. \end{proof}

\begin{cor}\label{basis} For an algebraic metric space $(X,d)$, \[\{r\ominus d(b,-)\mid b\in B, r\in[0,\infty]\}\] is a subbasis for the regular functions of $\Sigma(X,d)$. \end{cor}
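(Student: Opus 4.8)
The plan is to deduce Corollary~\ref{basis} directly from Theorem~\ref{algebraic distance} together with Proposition~\ref{initial source}. The idea is that Theorem~\ref{algebraic distance} expresses each basic regular function $\si(-,A)$ of $\Sigma(X,d)$ as a supremum $\sup_{b\in B}\bigl(r_b\ominus d(b,-)\bigr)$ where $r_b=\inf_{a\in A}d(b,a)$, so the family $\{\si(-,A)\mid A\subseteq X,\ A\neq\varnothing\}$ already lies in the smallest set containing $\CB=\{r\ominus d(b,-)\mid b\in B,\ r\in[0,\infty]\}$ and closed under (R1)--(R3). Conversely, every member of $\CB$ is a regular function of $\Sigma(X,d)$ by Lemma~\ref{compact element by regular}. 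Combining these two observations with Proposition~\ref{initial source} gives the claim.

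Concretely, first I would note that by Lemma~\ref{compact element by regular}, for each $b\in B$ and each $r\in[0,\infty]$ the function $r\ominus d(b,-)$ is a Scott weight, hence a regular function of $\Sigma(X,d)$; thus $\CB\subseteq\mathcal{R}X$, and therefore the smallest set $\mathcal{S}$ containing $\CB$ and closed under (R1)--(R3) satisfies $\mathcal{S}\subseteq\mathcal{R}X$. Second, for the reverse inclusion, I would invoke Proposition~\ref{initial source}: the family $\{\si(-,A)\mid A\subseteq X\}$ is a subbasis for $\mathcal{R}X$, so it suffices to show each $\si(-,A)$ belongs to $\mathcal{S}$. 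For $A=\varnothing$ we have $\si(-,\varnothing)\equiv\infty=\infty\ominus d(b,-)$ for any fixed $b\in B$ (using $B\neq\varnothing$, which holds since $X$ is algebraic and nonempty -- or, if $X=\varnothing$, the statement is vacuous), so $\si(-,\varnothing)\in\CB\subseteq\mathcal{S}$. For $A\neq\varnothing$, Theorem~\ref{algebraic distance} gives
\[
\si(-,A)=\sup_{b\in B}\Bigl(\inf_{a\in A}d(b,a)\ominus d(b,-)\Bigr),
\]
and each summand $\inf_{a\in A}d(b,a)\ominus d(b,-)$ is of the form $r\ominus d(b,-)$ with $r=\inf_{a\in A}d(b,a)\in[0,\infty]$, hence lies in $\CB$; by closure under suprema (R1), the supremum lies in $\mathcal{S}$. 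Therefore $\{\si(-,A)\mid A\subseteq X\}\subseteq\mathcal{S}$, and since this family generates $\mathcal{R}X$ under (R1)--(R3) and $\mathcal{S}$ is closed under those operations, $\mathcal{R}X\subseteq\mathcal{S}$. Combined with the first inclusion, $\mathcal{R}X=\mathcal{S}$, which is exactly the assertion that $\CB$ is a subbasis for the regular functions of $\Sigma(X,d)$.

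I do not anticipate a serious obstacle here, since the corollary is essentially a repackaging of Theorem~\ref{algebraic distance}; the only minor point to handle with care is the empty-set case and the degenerate case $X=\varnothing$ (where $B=\varnothing$), both of which are routine. One could alternatively phrase the whole argument purely in terms of Scott weights rather than regular functions, using Proposition~\ref{Scott via flat} and the fact (from the proof of Theorem~\ref{algebraic distance}) that every Scott weight is a supremum of functions $\inf_{a\in A}d(b,a)\ominus d(b,-)$, but routing through Proposition~\ref{initial source} is cleaner and keeps the proof short.
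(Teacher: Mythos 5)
Your argument is correct and is essentially the paper's own proof: the paper likewise reduces, via Proposition~\ref{initial source}, to showing that each $\si(-,A)$ with $A\neq\varnothing$ lies in the set generated by $\{r\ominus d(b,-)\mid b\in B,\ r\in[0,\infty]\}$, which is immediate from Equation~(\ref{windels}), with the inclusion of the subbasis in $\mathcal{R}X$ coming from Lemma~\ref{compact element by regular} exactly as you use it. One tiny slip: $\infty\ominus d(b,-)$ need not be the constant $\infty$ (it takes the value $0$ wherever $d(b,-)=\infty$, by the convention $\infty\ominus\infty=0$), but your empty-set case is still harmless because the constant $\infty$ equals $\phi+\infty$ for any $\phi$ in the subbasis, using (R3).
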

\begin{proof}Write $\si$ for the Scott  distance of $(X,d)$. Since $\{\si(-,A)\mid A\subseteq X\}$ is a subbasis for the regular functions of $\Sigma(X,d)$, it suffices to check that for every nonempty subset $A\subseteq X$, $\si(-,A)$ belongs to the set of regular functions generated as a subbasis by \[\{r\ominus d(b,-)\mid b\in B, r\in[0,\infty]\}.\]   This follows immediately from Equation (\ref{windels}).
\end{proof}

\begin{exmp}\label{P=Sigma} $\Sigma([0,\infty],d_R)=\mathbb{P}$. Let $\si$ be the Scott distance of $([0,\infty],d_R)$. Since every element except $\infty$ is compact in $([0,\infty],d_R)$,  it holds by Equation (\ref{windels})  that for all $x\in[0,\infty]$ and all nonempty subset $A\subseteq[0,\infty]$, \[\si(x,A)=\sup_{b\in[0,\infty)}\Big(\inf_{a\in A}d_R(b,a)\ominus d_R(b,x)\Big).\]
It is routine to check,   distinguishing whether $\sup A=\infty$, that $\Sigma([0,\infty],d_R)=\mathbb{P}$.
\end{exmp}

\begin{cor}\label{c=g} {\rm (\cite[Proposition 3.4]{Windels})} For an algebraic metric space $(X,d)$, the $c$-Scott topology is equal to the generalized Scott topology. \end{cor}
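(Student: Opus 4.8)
The plan is to invoke Theorem \ref{sandwich}, which already tells us the $c$-Scott topology is coarser than the generalized Scott topology, so only the reverse inclusion needs proof: every generalized Scott open subset of $(X,d)$ is $c$-Scott open. The one structural fact I would isolate first is that \emph{open balls centered at compact elements are $c$-Scott open}: for a compact $b\in B$ and $\varepsilon\in[0,\infty]$, the function $\varepsilon\ominus d(b,-)$ is a Scott weight by Lemma \ref{compact element by regular}, hence a regular function of $\Sigma(X,d)$ (equivalently, this is immediate from Corollary \ref{basis}); therefore its zero set $\{x\mid d(b,x)\geq\varepsilon\}$ is closed in the $c$-Scott topology, so its complement $B(b,\varepsilon)$ is $c$-Scott open. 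With this in hand the problem reduces to showing that an arbitrary generalized Scott open set is a union of such balls.

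So let $U$ be generalized Scott open and fix $x\in U$. Since $(X,d)$ is algebraic, $x$ is a Yoneda limit of a forward Cauchy net $\{x_i\}_i$ all of whose members are compact. Applying the definition of generalized Scott openness to this net and its Yoneda limit $x\in U$ yields an $\varepsilon>0$ and an index $i_0$ with $B(x_j,\varepsilon)\subseteq U$ for all $j\geq i_0$. It remains to place $x$ itself into one of these balls, and this is exactly what the Yoneda-limit property provides: taking $y=x$ in the defining equation gives $\inf_i\sup_{j\geq i}d(x_j,x)=d(x,x)=0$, so there is an index $i_1$ with $d(x_j,x)<\varepsilon$ for all $j\geq i_1$. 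Choosing $j$ above both $i_0$ and $i_1$ (the index set is directed), we get $x\in B(x_j,\varepsilon)\subseteq U$ with $x_j$ compact. Hence $U$ is the union, over $x\in U$, of the $c$-Scott open balls produced in this way, so $U$ is $c$-Scott open. Together with Theorem \ref{sandwich} this proves the two topologies coincide.

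I do not expect a genuine obstacle here; the substance is already packaged in the earlier results. The single point that must be handled with care is the translation from the net-theoretic formulation of generalized Scott openness to the regular-function formulation of the $c$-Scott topology: this is precisely where algebraicity is used (to supply a forward Cauchy net of compact elements with $x$ as Yoneda limit) and where the algebraic-distance formula of Theorem \ref{algebraic distance} / Corollary \ref{basis} is used (to know that balls around compact points are $c$-Scott open). Without the algebraicity hypothesis neither ingredient is available, which is consistent with Example \ref{c not= d}, where the $c$-Scott and generalized Scott topologies need not agree in general.
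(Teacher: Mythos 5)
Your proof is correct, but it follows a somewhat different route from the paper's. The paper argues entirely at the level of (sub)bases: by Corollary \ref{basis} the balls $B(b,r)$ with $b$ compact form a subbasis for the $c$-Scott topology, and it then cites \cite[Proposition 6.5]{BvBR1998} for the fact that these same balls form a basis of the generalized Scott topology, whence the two topologies coincide; Theorem \ref{sandwich} is not invoked at all. You instead split the statement into two inclusions: you import the inclusion of the $c$-Scott topology in the generalized Scott topology from Theorem \ref{sandwich}, and for the converse you prove by hand exactly the half of the cited basis result that is needed, namely that every generalized Scott open set is a union of balls centered at compact elements (using algebraicity to produce a forward Cauchy net of compact elements with Yoneda limit $x$, the defining property of generalized Scott openness to get $\varepsilon$ and $i_0$, and the Yoneda limit equation at $y=x$ to place $x$ in one of the balls $B(x_j,\varepsilon)$), combined with the observation from Lemma \ref{compact element by regular} that such balls are $c$-Scott open because $\varepsilon\ominus d(b,-)$ is a Scott weight, hence a regular function, whose zero set is $c$-Scott closed. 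What your route buys is self-containment -- no appeal to the external reference, and only Lemma \ref{compact element by regular} is needed rather than the full strength of Theorem \ref{algebraic distance}/Corollary \ref{basis}; what the paper's route buys is brevity and the explicit record that the compact-centered balls form a subbasis for the $c$-Scott topology, a fact of independent interest.
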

\begin{proof}Since for each compact element $b$ in $(X,d)$ and $r\in[0,\infty]$, \[(r\ominus d(b,-))^{-1}(0,\infty]= \{x\mid d(b,x)<r\}= B(b, r),\] then by Corollary \ref{basis}, the set \[\{B(b,r)\mid b ~\text{is compact}, r>0\}\] of open balls is a subbasis for the  $c$-Scott topology.  It is shown in \cite[Proposition 6.5]{BvBR1998} that this set of open balls is also a basis for the generalized Scott topology, hence these two topologies are equal.\end{proof}

Since every Yoneda complete metric space satisfies the condition (S), it follows that for a Yoneda complete algebraic metric space, the $d$-Scott topology, which is equal to the $c$-Scott topology, coincides with the  generalized Scott topology, as asserted in \cite[Exercise 7.4.69]{Goubault}.

Let $\{(X_i,d_i)\}_{i\in I}$ be a family  of metric spaces  and $\prod_i (X_i, d_i)$  their product. Suppose that $\{(x_i)^\lam\}_\lam$ is a forward Cauchy net in  $\prod_i (X_i, d_i)$. By  \cite[Lemma 7.4.13 and Lemma 7.4.15]{Goubault},  $(x_i)_i$ is a Yoneda limit of  $\{(x_i)^\lam\}_\lam$ in $\prod_i (X_i, d_i)$ if and only if  for all $i\in I$, $x_i$ is a Yoneda limit of the forward Cauchy net $\{{x_i}^\lam\}_\lam$ in $(X_i,d_i)$. In particular, if  $(X_i,d_i)$    is Yoneda complete for all $i\in I$ then so is $\prod_i (X_i, d_i)$.

A metric space $(X,d)$ is said to have a bottom element if there is an element $\bot$ in $X$ such that $d(\bot,x)=0$ for all $x\in X$.  The following conclusion is   \cite[Exercise 7.4.71]{Goubault}.

\begin{prop}\label{product of algebraic} For a family $\{(X_i,d_i)\}_i$ of algebraic metric spaces with   bottom elements, the product space $\prod_i (X_i, d_i)$ is algebraic. An element $b=(b_i)_i$ is compact in $\prod_i (X_i, d_i)$ if and only if every $b_i$ is compact  and for each $\varepsilon>0$, there is a finite subset $J_\varepsilon$ of $I$ such that $d_i(b_i,\bot_i)\leq \varepsilon$ whenever  $i\notin J_\varepsilon$. \end{prop}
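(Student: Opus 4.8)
The plan is to reduce everything to the coordinatewise description of forward Cauchy nets and Yoneda limits in a product recalled just above from \cite{Goubault}, together with Definition \ref{finite-definition}. I would first record two preliminaries: each bottom element $\bot_i$ is compact in $(X_i,d_i)$, since for a forward Cauchy net $\{z^\lambda\}$ with Yoneda limit $z$ one has $d_i(\bot_i,z)=0=\inf_\lambda\sup_{\mu\geq\lambda}d_i(\bot_i,z^\mu)$; and for any point $a$ of any metric space and any forward Cauchy net $\{y^\lambda\}$ with Yoneda limit $y$ the inequality $d(a,y)\leq\inf_\lambda\sup_{\mu\geq\lambda}d(a,y^\mu)$ always holds (combine $d(a,y)\leq d(a,y^\mu)+d(y^\mu,y)$ with $\inf_\lambda\sup_{\mu\geq\lambda}d(y^\mu,y)=d(y,y)=0$), so compactness of $a$ amounts exactly to the reverse inequality. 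Writing $d$ for the product metric, the ``if'' half of the characterization goes: given $b=(b_i)$ with each $b_i$ compact and the stated sets $J_\varepsilon$, and a forward Cauchy net $\{(x_i)^\mu\}$ in the product with Yoneda limit $(x_i)$, split $\sup_i d_i(b_i,x_i^\mu)$ over $J_\varepsilon$ and its complement. Off $J_\varepsilon$ one gets $d_i(b_i,x_i^\mu)\leq d_i(b_i,\bot_i)+d_i(\bot_i,x_i^\mu)=d_i(b_i,\bot_i)\leq\varepsilon$ uniformly in $\mu$; on the finite set $J_\varepsilon$ the operator $\inf_\lambda\sup_{\mu\geq\lambda}$ commutes with the finite maximum (each coordinate net $\{d_i(b_i,x_i^\mu)\}_\mu$ is forward Cauchy in $([0,\infty],d_R)$, hence convergent) and equals $\max_{i\in J_\varepsilon}d_i(b_i,x_i)$ by compactness of the $b_i$. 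Letting $\varepsilon\to 0$ yields $\inf_\lambda\sup_{\mu\geq\lambda}d(b,(x_i)^\mu)\leq d(b,(x_i))$, so $b$ is compact.

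For the ``only if'' half I would assume $b=(b_i)$ compact. To see $b_i$ is compact, lift an arbitrary forward Cauchy net $\{a^\lambda\}$ in $X_i$ with Yoneda limit $x_i$ to the product by putting $b_k$ in every coordinate $k\neq i$; by the coordinatewise description this lift is forward Cauchy with Yoneda limit the tuple that is $x_i$ in place $i$ and $b_k$ elsewhere, and evaluating $d(b,-)$ collapses every coordinate but $i$, so compactness of $b$ forces $d_i(b_i,x_i)\geq\inf_\lambda\sup_{\mu\geq\lambda}d_i(b_i,a^\mu)$, i.e.\ compactness of $b_i$. If the condition on the $J_\varepsilon$ failed, there would be $\varepsilon>0$ and pairwise distinct indices $i_1,i_2,\dots$ with $d_{i_k}(b_{i_k},\bot_{i_k})>\varepsilon$; define $\{c^n\}_{n\geq 1}$ by $c^n_{i_k}=b_{i_k}$ for $k\leq n$, $c^n_{i_k}=\bot_{i_k}$ for $k>n$, and $c^n_j=b_j$ otherwise. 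Since $d_{i_k}(\bot_{i_k},b_{i_k})=0$ one has $d(c^n,c^m)=0$ for all $n\leq m$, so $\{c^n\}$ is forward Cauchy with Yoneda limit $b$, yet $d(b,c^n)\geq\sup_{k>n}d_{i_k}(b_{i_k},\bot_{i_k})\geq\varepsilon$ for every $n$, contradicting the equality $0=d(b,b)=\inf_n\sup_{m\geq n}d(b,c^m)$ forced by compactness.

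For algebraicity, fix $(x_i)\in\prod_i X_i$. Using algebraicity of each $X_i$, choose a forward Cauchy net $\{a_i^\lambda\}_{\lambda\in\Lambda_i}$ of compact elements with Yoneda limit $x_i$, and, for each $n\geq 1$, an index $\lambda_i(n)\in\Lambda_i$, nondecreasing in $n$, with $d_i(a_i^\mu,a_i^{\mu'})\leq 1/n$ whenever $\mu'\geq\mu\geq\lambda_i(n)$ (possible by forward Cauchyness, taking upper bounds to keep the $\lambda_i(n)$ nondecreasing). Let $\mathcal{D}$ consist of the triples $(F,n,\gamma)$ with $F\subseteq I$ finite, $n\geq 1$, and $\gamma\in\prod_{i\in F}\Lambda_i$ such that $\gamma(i)\geq\lambda_i(n)$ for all $i\in F$, ordered by $F\subseteq F'$, $n\leq n'$, and $\gamma(i)\leq\gamma'(i)$ on $F$; this is directed. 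Put $c^{(F,n,\gamma)}_i=a_i^{\gamma(i)}$ for $i\in F$ and $c^{(F,n,\gamma)}_i=\bot_i$ otherwise. Each $c^{(F,n,\gamma)}$ is compact in the product by the characterization just proved, its coordinates being compact and agreeing with the $\bot_i$ off the finite set $F$. The net $\{c^{(F,n,\gamma)}\}$ is forward Cauchy: if $(F,n,\gamma)\sqsubseteq(F',n',\gamma')$ the coordinates outside $F$ contribute $0$ (since $d_i(\bot_i,-)\equiv 0$), while for $i\in F$ one has $\gamma'(i)\geq\gamma(i)\geq\lambda_i(n)$, so $d_i(a_i^{\gamma(i)},a_i^{\gamma'(i)})\leq 1/n$; hence $d(c^{(F,n,\gamma)},c^{(F',n',\gamma')})\leq 1/n$, which is small once $n$ is large. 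Finally, by the coordinatewise description it suffices to check that for each $i$ the net $\{c^{(F,n,\gamma)}_i\}_{(F,n,\gamma)\in\mathcal{D}}$ has Yoneda limit $x_i$: a direct computation shows the suprema occurring in $\inf_\lambda\sup_{\mu\geq\lambda}$ over $\mathcal{D}$ all have the form $\sup_{\mu\geq\nu}d_i(a_i^\mu,y)$ with $\nu$ running through the cofinal set $\{\nu\in\Lambda_i\mid \nu\geq\lambda_i(1)\}$, so by antitonicity the infimum equals $\inf_{\nu\in\Lambda_i}\sup_{\mu\geq\nu}d_i(a_i^\mu,y)=d_i(x_i,y)$. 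Thus $(x_i)$ is a Yoneda limit of the forward Cauchy net $\{c^{(F,n,\gamma)}\}$ of compact elements, and $\prod_i(X_i,d_i)$ is algebraic.

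The hard part is exactly this last construction: a single forward Cauchy net in the product must be assembled from the coordinatewise ones. The obvious candidate, indexed by all finite tuples $(F,\gamma)$ of approximants, is not forward Cauchy, because a coordinate that has just been switched on may sit at an early term of its net, far from later terms, with no uniform bound across the unboundedly many coordinates that can be active. The auxiliary parameter $n$ in $\mathcal{D}$ is precisely what repairs this: it forces every active coordinate $i$ to have its index past $\lambda_i(n)$, i.e.\ already inside the $1/n$-forward-Cauchy regime, so one choice of $n$ controls all coordinates at once; the bottom elements are what make switching a coordinate on cost nothing in distance, allowing $F$ to stay finite while $(x_i)$ is approached.
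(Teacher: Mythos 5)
Your argument is correct; note that the paper itself gives no proof of this proposition, citing it as \cite[Exercise 7.4.71]{Goubault}, so there is nothing to compare against except the standard route, which you essentially reconstruct: reduce compactness and Yoneda limits to coordinates via the cited product lemmas, use the bottoms to make switching coordinates on and off cost nothing, and repair the naive net of finite partial approximants with the uniformizing parameter $n$ so that all active coordinates sit in a common $1/n$-forward-Cauchy regime. All the key steps check out: the two preliminaries, the splitting over $J_\varepsilon$ and its complement, the lifted nets in the necessity half (including the sequence $\{c^n\}$ witnessing failure of the $J_\varepsilon$ condition), and the directed set $\mathcal{D}$, whose coordinate suprema do reduce, exactly as you say, to $\sup_{\mu\geq\nu}d_i(a_i^\mu,y)$ with $\nu$ ranging over a cofinal up-set of $\Lambda_i$. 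One small slip: the net $\{d_i(b_i,x_i^\mu)\}_\mu$ is forward Cauchy in $([0,\infty],d_L)$, not $([0,\infty],d_R)$ (it is $d_i(-,y)$, not $d_i(b_i,-)$, that is non-expansive into $([0,\infty],d_R)$); this is harmless, since such nets still converge in $[0,\infty]$, and in any case the interchange of a finite maximum with $\inf_\lambda\sup_{\mu\geq\lambda}$ holds unconditionally, because $\lambda\mapsto\sup_{\mu\geq\lambda}$ is antitone and the index set is directed, so no convergence hypothesis is needed there at all.
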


The following conclusion will be needed in the proof of the main result in next section, Theorem \ref{main}.
\begin{prop}\label{Scott distance preserves product} Let $(X,d)$ be an algebraic metric space with a bottom element $\bot$. Then  for each non-empty set $I$, $\Sigma((X,d)^I)= (\Sigma(X,d))^I$. In particular,  $\Sigma(([0,\infty],d_R)^I)= \mathbb{P}^I$. \end{prop}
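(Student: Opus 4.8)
The plan is to verify that the two approach spaces $\Sigma((X,d)^I)$ and $(\Sigma(X,d))^I$, which share the common underlying set $X^I$, have the same set of regular functions; by Theorem \ref{regular functions} this forces the two approach distances to coincide, which is exactly the assertion. Throughout write $d_I$ for the product metric on $X^I$, i.e.\ $d_I((x_i)_i,(y_i)_i)=\sup_{i\in I}d(x_i,y_i)$, write $p_i\colon X^I\lra X$ for the $i$-th projection, and write $\si$ for the Scott distance of $(X,d)$. By Proposition \ref{product of algebraic} the metric space $(X,d)^I=\prod_{i\in I}(X,d)$ is algebraic, with bottom element the constant family $(\bot)_{i\in I}$; hence Corollary \ref{basis} applies to it and tells us that
\[\{\,r\ominus d_I(\tilde b,-)\mid \tilde b\text{ compact in }(X,d)^I,\ r\in[0,\infty]\,\}\]
is a subbasis for the regular functions of $\Sigma((X,d)^I)$. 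So it remains to prove the two inclusions between $\mathcal{R}(\Sigma((X,d)^I))$ and $\mathcal{R}((\Sigma(X,d))^I)$.

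The inclusion $\mathcal{R}((\Sigma(X,d))^I)\subseteq\mathcal{R}(\Sigma((X,d)^I))$ is the easy one. By \cite[Lemma 7.4.13 and Lemma 7.4.15]{Goubault} (recalled just above), a point of $(X,d)^I$ is a Yoneda limit of a forward Cauchy net precisely when each of its coordinates is a Yoneda limit of the corresponding coordinate net; thus every projection $p_i\colon(X,d)^I\lra(X,d)$ is Yoneda continuous, so by Theorem \ref{contraction=Yoneda continuous} each $p_i\colon\Sigma((X,d)^I)\lra\Sigma(X,d)$ is a contraction, and by the universal property of the product in ${\sf App}$ the identity map of $X^I$ is a contraction $\Sigma((X,d)^I)\lra(\Sigma(X,d))^I$. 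By Proposition \ref{contraction by regular frame} this is exactly the desired inclusion of regular functions.

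The reverse inclusion $\mathcal{R}(\Sigma((X,d)^I))\subseteq\mathcal{R}((\Sigma(X,d))^I)$ is the main obstacle. Since $\mathcal{R}((\Sigma(X,d))^I)$ is closed under (R1)--(R3), by the subbasis above it suffices to show that $r\ominus d_I(\tilde b,-)$ is a regular function of $(\Sigma(X,d))^I$ whenever $\tilde b=(b_i)_i$ is compact in $(X,d)^I$ and $r\in[0,\infty]$. For each $i$, the function $(r\ominus d(b_i,-))\circ p_i$ is such a regular function: $r\ominus d(b_i,-)\in\mathcal{R}(\Sigma(X,d))$ by Corollary \ref{basis} (as $b_i$ is compact in $(X,d)$) and $p_i\colon(\Sigma(X,d))^I\lra\Sigma(X,d)$ is a contraction, so Proposition \ref{contraction by regular frame} applies. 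Now
\[r\ominus d_I(\tilde b,\tilde x)=r\ominus\sup_{i\in I}d(b_i,x_i)=\inf_{i\in I}\big(r\ominus d(b_i,x_i)\big),\]
an infimum indexed by all of $I$, which the closure operations (furnishing only finite minima) do not reach directly; this is precisely where the finite-support description of compact elements of $(X,d)^I$ in Proposition \ref{product of algebraic} is used. Given $\varepsilon>0$, fix a finite $J_\varepsilon\subseteq I$ with $d(b_i,\bot)\le\varepsilon$ for all $i\notin J_\varepsilon$; since $d(\bot,x)=0$ for every $x\in X$, this gives $d(b_i,x_i)\le\varepsilon$ for all $\tilde x\in X^I$ and all $i\notin J_\varepsilon$, hence $d_I(\tilde b,\tilde x)\le\big(\sup_{i\in J_\varepsilon}d(b_i,x_i)\big)+\varepsilon$. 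Put $g_\varepsilon=\min_{i\in J_\varepsilon}\big((r\ominus d(b_i,-))\circ p_i\big)$, a finite minimum of regular functions, so $g_\varepsilon$ is a regular function of $(\Sigma(X,d))^I$ by (R2) and $g_\varepsilon\ominus\varepsilon$ is one by (R3). Since $g_\varepsilon(\tilde x)=r\ominus\sup_{i\in J_\varepsilon}d(b_i,x_i)$, using the identity $(r\ominus a)\ominus\varepsilon=r\ominus(a+\varepsilon)$ together with the two displayed inequalities ($d_I(\tilde b,\tilde x)\ge\sup_{i\in J_\varepsilon}d(b_i,x_i)$ and $d_I(\tilde b,\tilde x)\le\sup_{i\in J_\varepsilon}d(b_i,x_i)+\varepsilon$) one gets $g_\varepsilon\ominus\varepsilon\le r\ominus d_I(\tilde b,-)\le g_\varepsilon$ pointwise. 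Letting $\varepsilon\to0$ and using $\sup_{\varepsilon>0}(a\ominus\varepsilon)=a$ then yields $r\ominus d_I(\tilde b,-)=\sup_{\varepsilon>0}\big(g_\varepsilon\ominus\varepsilon\big)$, which is a regular function of $(\Sigma(X,d))^I$ by (R1). This completes the plan: the two sets of regular functions agree, so $\Sigma((X,d)^I)=(\Sigma(X,d))^I$. Finally, $([0,\infty],d_R)$ is algebraic with bottom element $0$ and $\Sigma([0,\infty],d_R)=\mathbb{P}$ by Example \ref{P=Sigma}, so $\Sigma(([0,\infty],d_R)^I)=(\Sigma([0,\infty],d_R))^I=\mathbb{P}^I$.
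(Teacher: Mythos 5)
Your proposal is correct, and its core — the hard inclusion — is essentially the paper's own argument: reduce via Proposition \ref{product of algebraic} and Corollary \ref{basis} to the subbasic functions $r\ominus d_I(\tilde b,-)$ with $\tilde b$ compact, then use the finite set $J_\varepsilon$ from the compactness characterization to squeeze $r\ominus d_I(\tilde b,-)$ between finite minima of $(r\ominus d(b_i,-))\circ p_i$ up to $\varepsilon$ and take a supremum over $\varepsilon$; your $g_\varepsilon\ominus\varepsilon$ is the paper's $f_\varepsilon$ in a slightly different guise. The one place where you genuinely diverge is the easy inclusion $\mathcal{R}((\Sigma(X,d))^I)\subseteq\mathcal{R}(\Sigma((X,d)^I))$: the paper proves it by exhibiting, for each compact $a\in X$ and each $i$, the compact element $b^i$ of $(X,d)^I$ with $a$ in coordinate $i$ and $\bot$ elsewhere, so that $(r\ominus d(a,-))\circ p_i=r\ominus\rho(b^i,-)$, whereas you observe that the projections are Yoneda continuous (Yoneda limits in the product being computed coordinatewise), hence contractions $\Sigma((X,d)^I)\lra\Sigma(X,d)$ by Theorem \ref{contraction=Yoneda continuous}, and invoke initiality of the product source in ${\sf App}$ together with Proposition \ref{contraction by regular frame}. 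Your route is slightly more economical and does not use algebraicity or the bottom element for that inclusion (it shows in general that the identity $\Sigma\big(\prod_i(X_i,d_i)\big)\lra\prod_i\Sigma(X_i,d_i)$ is a contraction), while the paper's construction keeps everything inside the explicit subbasis calculus; both are valid, and your write-up correctly makes explicit the appeal to Corollary \ref{basis} for the algebraic product space, which the paper leaves implicit.
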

\begin{proof}Write $p_i:X^I\lra X$ for the $i$th projection and $\rho$ for the   metric of the product $(X,d)^I$, i.e., $\rho((a_i),(b_i))=\sup_{i\in I}d(a_i,b_i)$. Let $\si$ denote the Scott  distance of $(X^I,\rho)$ and $\delta$ the distance of the product space $(\Sigma(X,d))^I$. We must show that $\sigma=\delta$. To this end, we show that they have the same regular functions.

Let $B$ be the set of compact elements in $(X,d)$. By  Corollary \ref{basis}, \[\{r\ominus d(a,-)\mid r\in[0,\infty], a\in B\}\] is a subbasis for the regular functions of $\Sigma(X,d)$. Hence \[\{(r\ominus d(a,-))\circ p_i\mid r\in[0,\infty], a\in B, i\in I\}\] is a subbasis for the regular functions of $(X^I,\delta)$.

For each $a\in B$ and each $i$, define $b^i=(b^i_j)\in X^I$ by $b^i_i=a$ and $b^i_j=\bot$ whenever $j\not=i$. Then $b^i$ is a compact element in $(X^I,\rho)$. Since \[r\ominus \rho(b^i,x) =r\ominus d(a, x_i)= (r\ominus d(a,-))\circ p_i(x),\] it follows that every regular function of $(X^I,\delta)$ is a regular function of $(X^I,\sigma)$.

To see that every regular function of $(X^I,\sigma)$ is a regular function of  $(X^I,\delta)$, it suffices to show that for each compact element $b=(b_i)$ of $(X^I,\rho)$ and each $r\in[0,\infty]$, \[r\ominus \rho(b,-)\] is a regular function of  $(X^I,\delta)$.

Let $\varepsilon>0$. Without loss of generality, we assume that $\varepsilon\leq r$. Since $b$ is compact in $(X^I,\rho)$, by Proposition \ref{product of algebraic}, $b_i\in B$ for all $i$ and there is a finite subset $J$ of $I$ such that $d(b_i,\bot)\leq\varepsilon$ whenever $i\notin J$. For each $i\in J$,  \[(r\ominus d(b_i,-))\circ p_i \] is a regular function of $(X^I,\delta)$, hence \[f=\min\{ (r\ominus d(b_i,-))\circ p_i\mid i\in J\}\] is a regular function of $(X^I,\delta)$. For each $x\in X^I$, since \[f(x)=\min \{(r\ominus d(b_i,-))\circ p_i(x)\mid i\in J\}=r\ominus\max\{d(b_i, x_i)\mid i\in J\}, \] then \[r\ominus \rho(b,x)=r\ominus\sup_{i\in I}d(b_i, x_i)\leq f(x).\] Let \[f_\varepsilon=\min\{ ((r-\varepsilon)\ominus d(b_i,-))\circ p_i\mid i\in J\}.\] Then $f_\varepsilon$ is a regular function of $(X^I,\delta)$ such that $f_\varepsilon\leq f\leq f_\varepsilon+\varepsilon$. Since $d(b_i, x_i)\leq \varepsilon$ whenever $i\notin J$, it follows that
\[f_\varepsilon(x)\leq r\ominus \rho(b,x)\leq f(x).\] By arbitrariness of $\varepsilon$, we obtain that $r\ominus \rho(b,-)=\sup_{\varepsilon>0}f_\varepsilon$, hence $r\ominus \rho(b,-)$ is a regular function of $(X^I,\delta)$.
\end{proof}

The argument of the above proposition can be applied to show that if $\{(X_i,d_i)\}_i$ is a family of algebraic metric spaces with bottom elements, then $\prod_i\Sigma(X_i,d_i)= \Sigma\prod_i(X_i,d_i)$.  Consequently, the  $c$-Scott topology (= the generalized Scott topology) on the product metric space $\prod_i(X_i,d_i)$ is equal to the product topology of the $c$-Scott topologies on the factor spaces.

\section{Scott distance on   continuous metric spaces}
In 1972, Scott \cite{Scott72} proved that the specialization order functor establishes an isomorphism between the category of injective $T_0$ topological spaces and that of continuous lattices.  In this section, we investigate whether we have a metric version of this isomorphism. It should be pointed out that there is a quite different approach to this topic, see \cite{GH,Hof2011,Hof2013} for details.

An approach space $(X,\delta)$ is $T_0$ if $x=y$ whenever $\delta(x,\{y\})=\delta(y,\{x\})=0$.
A contraction $e: (X,\delta_X)\lra (Y,\delta_Y)$   is an embedding if $\delta_X(x,A)=\delta_Y(e(x),e(A))$ for all $A\subseteq X$ and $x\in X$.
A $T_0$ approach space $(Z,\delta_Z)$ is   injective  if for every embedding  $e: (X,\delta_X) \lra (Y,\delta_Y)$ and every contraction $f: (X,\delta_X) \lra(Z,\delta_Z)$, there exists a contraction $f^*: (Y,\delta_Y) \lra(Z,\delta_Z)$ that extends $f$, i.e., $f=f^*\circ e$.

The following lemma follows immediately from  \cite[Theorem 1.10.7]{RL97} which implies that for an embedding $(X,\delta_X) \lra (Y,\delta_Y)$ in the category {\sf App}, the regular functions on $X$ are precisely the restrictions of the regular functions on $Y$. The conclusion has also been proved in a more general context in \cite{HT2010}.

\begin{lem}  The approach space $\mathbb{P}=\Sigma([0,\infty],d_R)$ is injective. \end{lem}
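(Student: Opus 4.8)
The plan is to reduce the injectivity of $\mathbb{P}$ to the behaviour of regular functions under embeddings, using the fact that a contraction valued in $\mathbb{P}$ is, by definition, exactly a regular function.

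First I would record that $\mathbb{P}$ is $T_0$: if $x,y\in[0,\infty]$ satisfy $\delta_{\mathbb{P}}(x,\{y\})=x\ominus y=0$ and $\delta_{\mathbb{P}}(y,\{x\})=y\ominus x=0$, then $x\leq y$ and $y\leq x$ in $[0,\infty]$, so $x=y$; hence $\mathbb{P}$ is eligible to be an injective $T_0$ approach space and it remains only to verify the extension property. So let $e\colon(X,\delta_X)\lra(Y,\delta_Y)$ be an embedding and $f\colon(X,\delta_X)\lra\mathbb{P}$ a contraction. By the definition of regular functions recalled before Theorem \ref{regular functions}, a contraction from an approach space into $\mathbb{P}$ is precisely a regular function of that space; thus $f\in\mathcal{R}X$, and what I must produce is some $f^{*}\in\mathcal{R}Y$ with $f=f^{*}\circ e$, since such an $f^{*}$, read back as a map into $\mathbb{P}$, is automatically a contraction extending $f$.

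At this point I would invoke the result quoted just above the lemma: by \cite[Theorem 1.10.7]{RL97}, for an embedding $e\colon(X,\delta_X)\lra(Y,\delta_Y)$ in ${\sf App}$ one has $\mathcal{R}X=\{\psi\circ e\mid\psi\in\mathcal{R}Y\}$, i.e.\ every regular function on $X$ is the restriction along $e$ of a regular function on $Y$. Applying this to $f$ produces the desired $f^{*}$, and since $e$ and $f$ were arbitrary, $\mathbb{P}$ is injective.

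I do not expect a genuine obstacle here: once contractions into $\mathbb{P}$ are identified with regular functions, the content of the lemma is exactly the cited extension theorem, and the only self-contained checks are the $T_0$-ness of $\mathbb{P}$ and the (immediate) fact that restriction along an embedding means precomposition with it. It is worth noting why one cannot dispense with \cite[Theorem 1.10.7]{RL97} by a naive formula such as $f^{*}(y)=\inf_{x\in X}\big(f(x)+\delta_Y(y,\{e(x)\})\big)$: this map does restrict correctly to $f$ along $e$ (using $\delta_X(x_0,\{x_0\})=0$ together with $f\in\mathcal{R}X$), but it is an infimum of regular functions, and regular functions are closed only under suprema (R1) and finite minima (R2), not arbitrary infima, so regularity of $f^{*}$ on $Y$ is precisely what the cited theorem supplies.
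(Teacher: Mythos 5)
Your proof is correct and follows essentially the same route as the paper: identify contractions into $\mathbb{P}$ with regular functions and then invoke \cite[Theorem 1.10.7]{RL97}, which says that along an embedding every regular function of the subspace is the restriction of a regular function of the ambient space. The added checks (the $T_0$ property of $\mathbb{P}$ and the remark on why a naive infimum formula would not stay regular) are fine but not needed beyond what the paper's one-line argument already contains.
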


For each $T_0$ approach space $(X,\delta)$, the map \[e:(X,\delta)\lra\mathbb{P}^{2^X},\quad e(x)=(\delta(x,A))_{A\in 2^X} \]
is an embedding. Hence, every $T_0$ approach space can be embedded in some power of $\mathbb{P}$.

\begin{prop}\label{injective = retract} A $T_0$ approach space is injective if and only if it is a retract of some power of $\mathbb{P}$.\end{prop}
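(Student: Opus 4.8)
The plan is the familiar argument identifying injective objects with retracts of powers of a fixed injective cogenerator, here $\mathbb{P}$. First I would dispose of the easy implication. If $(Z,\delta_Z)$ is injective, apply the definition of injectivity to the embedding $e\colon(Z,\delta_Z)\lra\mathbb{P}^{2^Z}$, $e(z)=(\delta_Z(z,A))_{A\in 2^Z}$, recorded just before the statement, together with the identity contraction $\mathrm{id}\colon(Z,\delta_Z)\lra(Z,\delta_Z)$. This yields a contraction $r\colon\mathbb{P}^{2^Z}\lra Z$ with $r\circ e=\mathrm{id}$, so $Z$ is a retract of the power $\mathbb{P}^{2^Z}$.

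For the converse, suppose $Z$ is a retract of $\mathbb{P}^J$, with $s\colon Z\lra\mathbb{P}^J$ and $r\colon\mathbb{P}^J\lra Z$ contractions satisfying $r\circ s=\mathrm{id}$. I would first check that $\mathbb{P}^J$ is itself an injective $T_0$ approach space. It is $T_0$: the functor $\Omega$ is a right adjoint, hence preserves products, and $\Omega(\mathbb{P})=([0,\infty],d_R)$ is separated, so $\Omega(\mathbb{P}^J)$ is a product of separated metric spaces and is separated, i.e.\ $\mathbb{P}^J$ is $T_0$. It is injective: given an embedding $i\colon X\lra Y$ and a contraction $f\colon X\lra\mathbb{P}^J$, injectivity of $\mathbb{P}$ extends each component $p_j\circ f$ along $i$ to a contraction $g_j\colon Y\lra\mathbb{P}$, and then $(g_j)_{j\in J}\colon Y\lra\mathbb{P}^J$ is a contraction extending $f$.

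Next I would show that a retract of an injective $T_0$ approach space is again injective. Since $r\circ s=\mathrm{id}$, for every $x\in Z$ and $A\subseteq Z$ one has $\delta_Z(x,A)=\delta_Z(rs(x),rs(A))\le\delta_{\mathbb{P}^J}(s(x),s(A))\le\delta_Z(x,A)$, using that $r$ and then $s$ are contractions; hence $s$ is an embedding, and in particular $Z$ inherits the $T_0$ property from $\mathbb{P}^J$. Now, given an embedding $i\colon X\lra Y$ and a contraction $f\colon X\lra Z$, use injectivity of $\mathbb{P}^J$ to extend $s\circ f$ along $i$ to a contraction $h\colon Y\lra\mathbb{P}^J$; then $r\circ h\colon Y\lra Z$ is a contraction with $(r\circ h)\circ i=r\circ s\circ f=f$. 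Combining the two points, $Z$ is injective.

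I do not expect a genuine obstacle; the argument is formal. The points needing the most care are the bookkeeping in the two auxiliary claims — specifically, verifying that the section $s$ of a retract is automatically an embedding (so that both the $T_0$ condition and the extension property transfer correctly) and that arbitrary powers $\mathbb{P}^J$ remain $T_0$, so that they legitimately fall under the definition of ``injective'' in use. Both reduce to the contraction inequalities and to the fact that $\Omega$ preserves products, which are routine.
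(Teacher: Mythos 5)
Your argument is correct and is exactly the standard argument the paper intends: it states this proposition without proof, having just recorded the two ingredients you use (injectivity of $\mathbb{P}$ and the embedding $e\colon (X,\delta)\lra\mathbb{P}^{2^X}$), and your verifications that powers of $\mathbb{P}$ are injective and $T_0$ and that retracts (via sections, which are automatically embeddings) inherit injectivity are the routine steps being left to the reader.
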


Now we turn to the metric analogy of continuous lattices:  cocomplete and continuous separated metric spaces.

Let $(X,d)$ be a Yoneda complete metric space. By Proposition \ref{flat weight} and Proposition \ref{Yoneda limit=colimit} we know that each flat weight of $(X,d)$ has a colimit. The correspondence $\phi\mapsto\colim\phi$ defines a map $\colim:(\mathcal{F}X,\overline{d})\lra(X,d)$. This map is in fact a left adjoint of the Yoneda embedding $\y:(X,d)\lra(\mathcal{F}X,\overline{d})$   \cite{KW2011}.

\begin{defn}\label{def of continuous metric} (\cite{KW2011})
A metric space $(X,d)$ is continuous if it is Yoneda complete and the left adjoint $\colim:(\mathcal{F}X,\overline{d})\lra (X,d)$ of the Yoneda embedding $\mathbf{y}:(X,d) \lra (\mathcal{F}X,\overline{d})$ has a left adjoint,  denoted by $\thda:(X,d) \lra (\mathcal{F}X,\overline{d})$.\end{defn}

The following conclusion provides an important class of continuous metric spaces.

\begin{prop}\label{algebraic} Yoneda complete algebraic metric spaces are continuous. \end{prop}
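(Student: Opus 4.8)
The plan is to exhibit explicitly the left adjoint $\thda\colon(X,d)\lra(\mathcal{F}X,\overline{d})$ of $\colim$ whose existence is demanded by Definition~\ref{def of continuous metric}. Let $B\subseteq X$ be the set of compact elements and, for $x\in X$, define $\thda(x)\colon X\lra[0,\infty]$ by
\[\thda(x)(z)=\inf_{b\in B}\big(d(z,b)+d(b,x)\big).\]
This mimics, in the metric setting, the ideal $\{y\mid y\leq b\leq x\text{ for some compact }b\}$ that witnesses continuity of an algebraic dcpo. That $\thda(x)$ is a weight is immediate from the triangle inequality. The first substantive point is that $\thda(x)$ is \emph{flat}, so that $\thda$ really lands in $\mathcal{F}X$. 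By algebraicity there is a forward Cauchy net $\{b_i\}_i$ in $B$ with Yoneda limit $x$; let $\phi_x=\inf_i\sup_{j\geq i}d(-,b_j)$ be the flat weight it generates (Proposition~\ref{flat weight}), with $x$ as a colimit (Proposition~\ref{Yoneda limit=colimit}). I claim $\thda(x)=\phi_x$. Since every $b\in B$ is compact and $x$ is a colimit of $\phi_x$, Proposition~\ref{finite} gives $\phi_x(b)=d(b,x)$, whence $\phi_x(z)\leq d(z,b)+\phi_x(b)=d(z,b)+d(b,x)$ for all $b\in B$, so $\phi_x\leq\thda(x)$ pointwise. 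Conversely, given $z$ and $\varepsilon>0$ with $\phi_x(z)<\infty$, by definition of $\phi_x$ and of the Yoneda limit ($\inf_i\sup_{j\geq i}d(b_j,x)=d(x,x)=0$) there is an index $j$ with $d(z,b_j)<\phi_x(z)+\varepsilon$ and $d(b_j,x)<\varepsilon$; then $\thda(x)(z)\leq d(z,b_j)+d(b_j,x)<\phi_x(z)+2\varepsilon$. Thus $\thda(x)=\phi_x\in\mathcal{F}X$.

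Next I would verify the adjunction identity $\overline{d}(\thda(x),\phi)=d(x,\colim\phi)$ for every $x\in X$ and every flat weight $\phi$ (each of which has a colimit, since $(X,d)$ is Yoneda complete); once this holds, $\thda$ and $\colim$ are automatically non-expansive and the proof is finished. Starting from $\overline{d}(\psi,\phi)=\sup_z\big(\phi(z)\ominus\psi(z)\big)$, distributing $\ominus$ over the infimum defining $\thda(x)$, exchanging the two suprema, using $(\alpha\ominus c)\ominus e=\alpha\ominus(c+e)$, and applying the Yoneda lemma in the form $\overline{d}(d(-,b),\phi)=\phi(b)$, one computes
\[\overline{d}(\thda(x),\phi)=\sup_{b\in B}\big(\phi(b)\ominus d(b,x)\big).\]
Writing $a=\colim\phi$, Proposition~\ref{finite} turns $\phi(b)$ into $d(b,a)$ for $b\in B$, so the task reduces to proving
\[\sup_{b\in B}\big(d(b,a)\ominus d(b,x)\big)=d(x,a),\]
of which the inequality ``$\leq$'' is merely $d(b,a)\leq d(b,x)+d(x,a)$.

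For the inequality ``$\geq$'' I would invoke algebraicity once more, now at the point $x$ itself: choose a forward Cauchy net $\{c_k\}_k$ in $B$ with Yoneda limit $x$. Then $\inf_k\sup_{l\geq k}d(c_l,x)=d(x,x)=0$, whereas $\sup_{l\geq k}d(c_l,a)\geq\inf_k\sup_{l\geq k}d(c_l,a)=d(x,a)$ for every $k$, the last equality being the definition of the Yoneda limit evaluated at $a$. So, for each $\varepsilon>0$, first pick $k$ with $d(c_l,x)<\varepsilon$ for all $l\geq k$, then pick $l\geq k$ with $d(c_l,a)>d(x,a)-\varepsilon$ (or $d(c_l,a)>1/\varepsilon$ if $d(x,a)=\infty$); this compact $c_l$ satisfies $d(c_l,a)\ominus d(c_l,x)>d(x,a)-2\varepsilon$. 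Letting $\varepsilon\to0$ gives the claim, hence $\thda\dashv\colim$, and $(X,d)$ is continuous.

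I expect the crux to be exactly this last inequality $\sup_{b\in B}\big(d(b,a)\ominus d(b,x)\big)\geq d(x,a)$: it is the only place where algebraicity is used at a point other than $\colim\phi$, and it is precisely what would fail for a merely continuous (non-algebraic) space if one tried this same formula for $\thda$. The identification $\thda(x)=\phi_x$ (hence flatness) and the truncated-minus bookkeeping behind the formula for $\overline{d}(\thda(x),\phi)$ are the remaining points needing care, but they are routine; one should just keep in mind that Yoneda completeness guarantees that every flat weight has a colimit, so all appeals to $\colim$ and to Proposition~\ref{finite} are legitimate.
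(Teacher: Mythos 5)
Your proposal is correct and takes essentially the same route as the paper: there, too, $\thda(x)$ is the flat weight $\inf_i\sup_{j\geq i}d(-,b_j)$ generated by a forward Cauchy net of compact elements with Yoneda limit $x$ (your net-free formula $\inf_{b\in B}\big(d(-,b)+d(b,x)\big)$ is the same weight, as you verify), and adjointness $\overline{d}(\thda(x),\phi)=d(x,\colim\phi)$ is checked via Proposition \ref{finite}. The only difference is bookkeeping: the paper obtains the identity in one short chain by observing that $\thda(x)$ is a Yoneda limit of $\{\y(b_j)\}$ in $(\CP X,\overline{d})$ and then applying the Yoneda lemma, compactness of the $b_j$, and the Yoneda limit property of $x$ along that single net, whereas you pass through the formula $\sup_{b\in B}\big(\phi(b)\ominus d(b,x)\big)$ and invoke algebraicity a second time at $x$.
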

\begin{proof}
We show that the left adjoint $\colim: (\mathcal{F}X,\overline{d}) \lra (X,d)$ of the Yoneda embedding $\y: (X,d) \lra (\mathcal{F}X,\overline{d})$  has a left adjoint. For each  $a\in X$, take a forward Cauchy net $\{a_i\}_i$ of compact elements in $(X,d)$ with $a$ as a Yoneda limit and let $\thda\!a$ be the Yoneda limit of the forward Cauchy net $\{\y(a_i)\}_i$ in $(\mathcal{F}X,\overline{d})$. Then for every flat weight $\phi$ of $(X,d)$,
\begin{align*}
\overline{d}(\thda\!a,\phi)
&=\inf_i\sup_{j\geq i}\overline{d}(\y(a_j), \phi)&(\thda\! a{\rm~is~a~Yoneda~limit~of~} \{\y(a_i)\}_i)\\
&=\inf_i\sup_{j\geq i}\phi(a_j)&({\rm Yoneda~lemma})
\\
&=\inf_i\sup_{j\geq i}d(a_j, \colim\phi)&( a_j{\rm~is~compact}) \\
&=d(a,\colim\phi),&(a{\rm~is~a~Yoneda~limit~of~} \{a_i\}_i)
\end{align*}
hence $\thda$ is a left adjoint of $\colim$.
\end{proof}

The following lemma is proved in \cite{GN}  using a characterization of continuous metric spaces in terms of formal balls. For sake of self-containment, we include a direct proof here.
\begin{lem}\label{retracts are continuous} {\rm(\cite[Proposition 7.1]{GN})} In the category of metric spaces and Yoneda continuous maps, retracts of  continuous metric spaces  are continuous. \end{lem}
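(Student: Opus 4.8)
The plan is to transport the continuity structure along the retraction. Suppose $(X,d)$ is a retract in ${\sf Met}^\ua$ of a continuous metric space $(Y,e)$, witnessed by Yoneda continuous maps $s:(X,d)\lra(Y,e)$ and $r:(Y,e)\lra(X,d)$ with $r\circ s=1_X$. First I would check that $(X,d)$ is Yoneda complete: given a forward Cauchy net $\{x_i\}_i$ in $(X,d)$, the net $\{s(x_i)\}_i$ is forward Cauchy in $(Y,e)$ (non-expansive maps preserve forward Cauchy nets), so it has a Yoneda limit $y$; then $r(y)$ is a Yoneda limit of $\{r(s(x_i))\}_i=\{x_i\}_i$ since $r$ is Yoneda continuous. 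Hence each flat weight of $(X,d)$ has a colimit, and the map $\colim:(\mathcal{F}X,\overline{d})\lra(X,d)$ is defined and is left adjoint to $\y_X$.

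Next I would produce the desired left adjoint $\thda:(X,d)\lra(\mathcal{F}X,\overline{d})$ of $\colim$. The idea is to conjugate the corresponding map $\thda_Y:(Y,e)\lra(\mathcal{F}Y,\overline{e})$ for $Y$ through the maps $\overline{s}:(\mathcal{F}X,\overline{d})\lra(\mathcal{F}Y,\overline{e})$ and $(-)\circ s:(\mathcal{F}Y,\overline{e})\lra(\mathcal{F}X,\overline{d})$ (note these restrict to $\mathcal{F}$ by Lemma \ref{image of flat weight}), defining $\thda:=\big((-)\circ s\big)\circ\thda_Y\circ s$, i.e. $\thda\,a=\thda_Y(s(a))\circ s$. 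Since a composite of left adjoints is a left adjoint, it suffices to verify that $\colim:(\mathcal{F}X,\overline{d})\lra(X,d)$ factors compatibly, namely that $r\circ\colim_Y\circ\,\overline{s}=\colim_X$ as maps $(\mathcal{F}X,\overline{d})\lra(X,d)$. This identity is where the real content sits: it says that forming the colimit of a flat weight $\phi$ on $X$ agrees with pushing $\phi$ forward to $Y$, taking its colimit there, and retracting back. It follows from the fact (recorded in the excerpt, via \cite{FSW}) that if $\phi=\inf_i\sup_{j\geq i}d(-,x_j)$ is generated by a forward Cauchy net $\{x_i\}_i$ then $\overline{s}(\phi)$ is generated by $\{s(x_i)\}_i$, together with Proposition \ref{Yoneda limit=colimit} and the Yoneda continuity of $r$: $\colim_Y\overline{s}(\phi)$ is a Yoneda limit of $\{s(x_i)\}_i$, so $r(\colim_Y\overline{s}(\phi))$ is a Yoneda limit of $\{x_i\}_i$, i.e. a colimit of $\phi$; since $(X,d)$ need not be separated one argues at the level of the universal property, checking $\overline{d}(\phi,\y(z))=d(r\colim_Y\overline{s}(\phi),z)$ for all $z$ using the adjunction $\overline{s}\dashv(-)\circ s$ of Lemma \ref{left kan} and the defining property of $\colim_Y$.

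Granting that factorization, I would then assemble the adjunction chain
\[
\thda\;=\;\big((-)\circ s\big)\circ\thda_Y\circ s\;\dashv\; r\circ\colim_Y\circ\,\overline{s}\;=\;\colim_X,
\]
where $s\dashv$ ? is replaced by the genuine chain of adjunctions $s$ (as part of a split idempotent, $s$ is left adjoint to $r$ in the relevant sense on presheaf categories) $\dashv r$, $\thda_Y\dashv\colim_Y$, $\overline{s}\dashv(-)\circ s$; composing left adjoints gives a left adjoint to the composite of the right adjoints, which is $\colim_X$. Hence $(X,d)$ is continuous. The main obstacle is the careful bookkeeping in the non-separated setting: adjunctions between metric spaces are defined by an equality of distances, colimits are not unique, and one must make sure every "conjugation" step is justified by an actual chain of adjoint non-expansive maps rather than by a choice of representatives; concretely, establishing $r\circ\colim_Y\circ\,\overline{s}=\colim_X$ on the nose (not merely up to $0$-distance) is the step that requires the most care, and it is handled by unwinding universal properties through Lemma \ref{left kan} rather than by manipulating chosen colimit points.
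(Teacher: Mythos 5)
Your first half is fine and matches the paper's Step 1: Yoneda completeness of the retract, equivalently the fact that $r\circ\colim_Y\circ\,\overline{s}(\phi)$ is a colimit of each flat weight $\phi$, is exactly what the paper proves, and your justification via generated forward Cauchy nets and Yoneda continuity of $r$ is correct. The gap is in the second half, the construction of the left adjoint $\thda$. First, the formal ``composite of adjunctions'' does not typecheck: a section--retraction pair in ${\sf Met}^\ua$ is \emph{not} an adjunction, so your claim $s\dashv r$ is false in general (it would force $d_Y(s(x),y)=d_X(x,r(y))$ for all $x,y$); moreover, in the composite $\colim_X=r\circ\colim_Y\circ\,\overline{s}$ the map $\overline{s}$ is a \emph{left} adjoint (Lemma \ref{left kan}) and $(-)\circ s$ a right adjoint, so you are not exhibiting $\colim_X$ as a composite of right adjoints, and composing $(-)\circ s$, $\thda_Y$, $s$ is not a composite of left adjoints. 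Second, the citation of Lemma \ref{image of flat weight} for $(-)\circ s$ is a misapplication: that lemma concerns the pushforward $\overline{f}$, and precomposition with a non-expansive map need not preserve flatness, so $\thda_Y(s(a))\circ s$ need not even lie in $\mathcal{F}X$.

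More seriously, the candidate itself is wrong, not just its justification. Restricting $\thda_Y(s(a))$ along $s$ only records the part of the ``way-below'' weight supported on $s(X)$, and that can be too small. Concretely, take $Y=\omega([0,1])$ (a continuous metric space, since $[0,1]$ is a continuous lattice and flat weights of $\omega(P)$ are exactly indicators of ideals), $X=\omega(\{0<1\})$, $s$ the inclusion and $r(0)=0$, $r(t)=1$ for $t>0$; these are Yoneda continuous and $r\circ s=1$. Here $\thda_Y(s(1))$ is the indicator weight of the ideal $[0,1)$, so your $\thda(1)=\thda_Y(s(1))\circ s=\y_X(0)$, whose colimit is $0$; then $\overline{d_X}(\thda(1),\y_X(0))=0$ while $d_X(1,\colim\y_X(0))=d_X(1,0)=\infty$, so the required identity $\overline{d_X}(\thda(a),\phi)=d_X(a,\colim_X\phi)$ fails. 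The correct transport is the pushforward $\thda:=\overline{r}\circ\thda_Y\circ s$ (which does land in flat weights by Lemma \ref{image of flat weight}), and since no adjunction $s\dashv r$ is available, the adjunction identity has to be verified by two separate inequalities, as the paper does: one direction uses $\overline{r}\circ\overline{s}=1$, non-expansiveness of $\overline{r}$, isometry and Yoneda continuity of $s$; the other uses Lemma \ref{left kan} ($\overline{r}\dashv(-)\circ r$), non-expansiveness and Yoneda continuity of $r$, $r\circ s=1$, and the identity $\overline{r}(\phi\circ r)=\phi$.
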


\begin{proof}Suppose that $(X,d_X)$ is a continuous metric space, $r:(X,d_X)\lra (Y,d_Y)$ and  $s:(Y,d_Y)\lra(X,d_X)$ are Yoneda continuous maps, and that $r\circ s=1$.  We must  show that $(Y,d_Y)$ is continuous.

First of all, we list here some facts about $r$ and $s$:  (i) $s$ is isometric; (ii)  $\overline{r}\circ\overline{s} =1$; and (iii)     $\overline{r}(\phi\circ r)=\phi$ for all $\phi\in\CP Y$ (verification is left to the reader).

Write $\colim_X:(\mathcal{F}X,\overline{d_X})\lra (X,d_X)$ for the left adjoint of  the Yoneda embedding $\y_X:(X,d_X)\lra(\mathcal{F}X,\overline{d_X})$, and  $\thda_X:(X,d_X)\lra(\mathcal{F}X,\overline{d_X})$  for the left adjoint of $\colim_X$. We prove the conclusion in two steps.

\textbf{Step 1}. $(Y,d_Y)$ is Yoneda complete. We leave it to the reader to check that for each flat weight $\phi$ of $(Y,d_Y)$,   $r\circ \colim_X\circ \overline{s}(\phi)$ is a colimit of $\phi$.

\textbf{Step 2}. $(Y,d_Y)$ is continuous. We show that $\thda_Y=\overline{r}\circ \thda_X\circ s$ is left adjoint to   $\colim_Y:(\CF Y, \overline{d_Y})\lra (Y,d_Y)$. That is, $\overline{d_Y}(\thda_Y\!(y),\phi)= d_Y(y,\colim_Y(\phi))$ for all $y\in Y$ and $\phi\in\CF Y$. On one hand,
\begin{align*}
  \overline{d_Y}(\thda_Y\!(y),\phi)
  &= \overline{d_Y}(\overline{r}\circ \thda_X\circ s(y),\overline{r}\circ\overline{s}(\phi)) &(\overline{r}\circ\overline{s}=1) \\
  &\leq \overline{d_X}(\thda_X\circ s(y),\overline{s}(\phi))& (\overline{r}~\text{is~non-expansive})\\
  &=d_X(s(y),\colim_X\circ\overline{s}(\phi))&(\thda_X {\rm~is~ left~adjoint~to~} \colim_X)\\
  &=d_X(s(y),s\circ \colim_Y(\phi))&(s{\rm~is~Yoneda~continuous})\\
  &=d_Y(y,\colim_Y(\phi)).&(s {\rm~is~isometric})
  \end{align*}
On the other hand,
\begin{align*}
  \overline{d_Y}(\thda_Y\!(y),\phi)& =\overline{d_Y}(\overline{r}\circ \thda_X\circ s(y),\phi) \\
  &=\overline{d_X}(\thda_X\circ s(y),\phi\circ r)&({\rm Lemma~}\ref{left kan})\\
  &=d_X(s(y),\colim_X(\phi\circ r))&(\thda_X {\rm~is~left~adjoint~to~} \colim_X)\\
  &\geq d_Y(y,r\circ \colim_X(\phi\circ r))&(r~\text{is non-expansive and}~r\circ s=1)\\
  &=d_Y(y,\colim_Y(\overline{r}(\phi\circ r)))&(r{\rm~is~Yoneda~continuous})\\
  &=d_Y(y,\colim_Y(\phi)). &(\overline{r}(\phi\circ r)=\phi)
\end{align*}

This completes the proof.
\end{proof}

Suppose $(X,d)$ is a separated continuous metric space. Both $\colim:(\mathcal{F}X,\overline{d})\lra (X,d)$ and $\thda:(X,d) \lra (\mathcal{F}X,\overline{d})$ are left adjoints,  hence both of them are Yoneda continuous. Since the composite $\colim\circ\thda$ is the identity map,   $(X,d)$ is a retract of the Yoneda complete algebraic metric space $(\mathcal{F}X,\overline{d})$ in the category of metric spaces and Yoneda continuous maps. Because every functor  preserves retracts and the $d$-Scott, $c$-Scott and   generalized Scott topologies coincide for a Yoneda complete algebraic metric space, therefore Corollary \ref{c=g} holds for all separated continuous metric spaces, that is to say, the $d$-Scott,   $c$-Scott and  generalized Scott topologies on such a space coincide with each other.

\begin{defn} (\cite{BvBR1998}) A metric space $(X,d)$ is  cocomplete  if each weight of $(X,d)$ has a colimit.\end{defn}

It is trivial that a metric space $(X,d)$ is  cocomplete if and only if the Yoneda embedding $\y:(X,d)\lra(\CP X,\overline{d})$ has a left adjoint \cite{BvBR1998}.

The following  examples of cocomplete metric spaces are sort of folklore in category theory.
\begin{exmp}\label{cocomplete examples} \begin{enumerate}[label={\rm(\arabic*)}] \setlength{\itemsep}{-2pt} \item  $([0,\infty],d_L)$ is cocomplete. For each weight $\phi$ of $([0,\infty],d_L)$, \[\colim\phi=\inf\limits_{x\in[0,\infty]}(\phi(x)+x).\]

\item $([0,\infty],d_R)$ is cocomplete. For each weight $\psi$ of $([0,\infty],d_R)$, \[\colim\psi=\sup\limits_{x\in[0,\infty]}(x\ominus\psi(x)).\]
\item  For each set $I$, both $([0,\infty],d_L)^I$ and $([0,\infty],d_R)^I$ are cocomplete.

\item  For every metric space $(X,d)$, $(\CP X,\overline{d})$ is cocomplete. For each weight $\Phi$ of $(\CP X,\overline{d})$, \[\colim\Phi=\inf\limits_{\phi\in\CP X}(\Phi(\phi)+\phi).\] \end{enumerate}    \end{exmp}

\begin{prop}\label{retracts are complete} Every retract  of a cocomplete metric space   in the category {\sf Met} is cocomplete. \end{prop}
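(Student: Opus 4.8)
The plan is to exploit the characterization that a metric space $(Y,d_Y)$ is cocomplete if and only if the Yoneda embedding $\y\colon(Y,d_Y)\lra(\CP Y,\overline{d_Y})$ has a left adjoint, together with the observation that $(\CP X,\overline{d_X})$ is cocomplete for every $(X,d_X)$ (Example \ref{cocomplete examples}(4)). So let $(X,d_X)$ be cocomplete, let $r\colon(X,d_X)\lra(Y,d_Y)$ and $s\colon(Y,d_Y)\lra(X,d_X)$ be non-expansive maps with $r\circ s=1_Y$, and write $\colim_X\colon(\CP X,\overline{d_X})\lra(X,d_X)$ for the left adjoint of $\y_X$. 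The claim is that $\colim_X$ composed appropriately furnishes a left adjoint to $\y_Y$.

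First I would record the elementary facts about a retract in {\sf Met} that were already used in the proof of Lemma \ref{retracts are continuous}: (i) $s$ is isometric; (ii) $\overline{r}\circ\overline{s}=1$ on $\CP Y$; and (iii) $\overline{r}(\phi\circ r)=\phi$ for all $\phi\in\CP Y$. The candidate left adjoint of $\y_Y$ is $L=r\circ\colim_X\circ\overline{s}\colon(\CP Y,\overline{d_Y})\lra(Y,d_Y)$; equivalently, for a weight $\phi$ of $(Y,d_Y)$ one sets $\colim_Y\phi = r(\colim_X\overline{s}(\phi))$. Then I would verify the adjunction identity $\overline{d_Y}(L(\phi),y)=\overline{d_Y}(\phi,\y_Y(y))$ for all $\phi\in\CP Y$ and $y\in Y$ by a chain of (in)equalities in both directions, exactly parallel to Step 1 and Step 2 of the proof of Lemma \ref{retracts are continuous}: in one direction use that $\overline{r}$ is non-expansive together with $\overline{r}\circ\overline{s}=1$ and that $\colim_X$ is left adjoint to $\y_X$; in the other direction use Lemma \ref{left kan} ($\overline{r}\dashv(-)\circ r$), the adjunction $\colim_X\dashv\y_X$, non-expansiveness of $r$ with $r\circ s=1$, and finally fact (iii). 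Since every weight of $(Y,d_Y)$ then has a colimit, $(Y,d_Y)$ is cocomplete.

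The main point — really the only point that needs care — is checking the two inequalities that establish $L\dashv\y_Y$; everything else is bookkeeping about the retract data. This is genuinely routine here, since it is the colimit-analogue of the argument already carried out for the $\thda$-adjoint in Lemma \ref{retracts are continuous}, only simpler: here we work with all of $\CP$ rather than with flat weights $\CF$, so no appeal to Yoneda completeness or to the continuity structure is needed, and the map $\colim_X$ is available directly from cocompleteness of $(X,d_X)$ rather than having to be reconstructed. I would also note in passing that, unlike in Lemma \ref{retracts are continuous}, separatedness plays no role, and that one could alternatively phrase the whole proof functorially: $\CP(-)$ extends to a functor, $\CP X$ is cocomplete, cocomplete objects are exactly those for which $\y$ has a left adjoint, and left adjoints (hence the property of $\y$ admitting one) are preserved under retracts — but the hands-on verification is short enough that I would just present it directly.
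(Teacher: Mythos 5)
Your proposal is correct; note that the paper in fact states this proposition without any proof, treating it as routine, so the argument you sketch -- setting $\colim_Y\phi=r(\colim_X\overline{s}(\phi))$ and checking that this yields a left adjoint to $\y_Y$ -- is exactly the expected one (the metric analogue of the fact that a monotone retract of a complete lattice is complete). The verification does go through with the toolkit you list: one inequality follows from non-expansiveness of $r$ with $r\circ s=1$, the adjunction $\colim_X\dashv\y_X$, Lemma \ref{left kan} applied to $s$ (i.e.\ $\overline{s}\dashv(-)\circ s$) and isometry of $s$, while the other uses $\overline{r}\circ\overline{s}=1$, non-expansiveness of $\overline{r}$, the naturality identity $\overline{r}\circ\y_X=\y_Y\circ r$ and again $\colim_X\dashv\y_X$; so the only quibble is that your parenthetical attribution of which fact serves which direction needs a minor reshuffle, not that any idea is missing.
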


The following conclusion is a metric analogy of the fact that every continuous lattice is a retract of some powerset in the category of ordered sets and Scott continuous maps.

\begin{prop}\label{3.30}Cocomplete and continuous   separated metric spaces are exactly retracts of  powers of $([0,\infty],d_L)$ in the category   of   metric spaces and Yoneda continuous maps. \end{prop}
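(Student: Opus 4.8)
The plan is to prove the two halves of the stated equality separately; the direction ``retracts $\Rightarrow$ cocomplete, continuous, separated'' is an assembly of results already in hand, while the converse requires an explicit splitting. For the first direction, note that $([0,\infty],d_L)$ is cocomplete (Example~\ref{cocomplete examples}(1)), separated, Yoneda complete (Example~\ref{d_L}), and algebraic with bottom element $\infty$ (since $d_L(\infty,x)=x\ominus\infty=0$ for every $x$). Hence every power $([0,\infty],d_L)^I$ is cocomplete (Example~\ref{cocomplete examples}(3)), separated (a product of separated spaces is separated), algebraic (Proposition~\ref{product of algebraic}), and Yoneda complete, so continuous by Proposition~\ref{algebraic}. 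If $(X,d)$ is a retract of $([0,\infty],d_L)^I$ in ${\sf Met}^\ua$ then it is also a retract in ${\sf Met}$, hence cocomplete by Proposition~\ref{retracts are complete} and continuous by Lemma~\ref{retracts are continuous}; and it is separated, because if $d(u,v)=d(v,u)=0$ then the (non-expansive) section sends $u$ and $v$ to points at distance $0$ in both directions, hence to the same point of the separated space $([0,\infty],d_L)^I$, forcing $u=v$.

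For the converse, let $(X,d)$ be cocomplete, continuous and separated; I will exhibit it as a retract of $([0,\infty],d_L)^X$ in ${\sf Met}^\ua$. Identify $([0,\infty],d_L)^X$ with $[0,\infty]^X$ under the sup-metric $\rho(f,g)=\sup_{x\in X}d_L(f(x),g(x))$; then $(\CP X,\overline d)$ and $(\CF X,\overline d)$ are metric subspaces of it, and $[0,\infty]^X$ is precisely the presheaf space of the set $X$ equipped with the discrete metric ($d(x,y)=\infty$ for $x\ne y$). Three maps are available: the left adjoint $\thda\colon(X,d)\to(\CF X,\overline d)$ of $\colim\colon(\CF X,\overline d)\to(X,d)$ supplied by continuity, with $\colim\circ\thda=1_X$ (as recorded after Lemma~\ref{retracts are continuous}); the left adjoint $L\colon(\CP X,\overline d)\to(X,d)$ of the Yoneda embedding $\y\colon(X,d)\to(\CP X,\overline d)$ supplied by cocompleteness, which on each weight $\psi$ returns a colimit of $\psi$ and hence restricts to $\colim$ on $\CF X$; and the map $p\colon[0,\infty]^X\to\CP X$, $p(f)=\inf_{y\in X}(f(y)+d(-,y))$ (the largest weight below $f$), which is $\overline g$ for the non-expansive identity map $g$ from the discrete metric space on $X$ to $(X,d)$, so that by Lemma~\ref{left kan} $p$ is left adjoint to the inclusion $\CP X\hookrightarrow[0,\infty]^X$ and fixes every weight.

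Now set $s=k_2\circ k_1\circ\thda\colon X\to([0,\infty],d_L)^X$, where $k_1\colon\CF X\hookrightarrow\CP X$ and $k_2\colon\CP X\hookrightarrow([0,\infty],d_L)^X$ are the inclusions, and $r=L\circ p\colon([0,\infty],d_L)^X\to X$. The map $r$ is Yoneda continuous, being a composite of the left adjoints $p$ and $L$. The map $s$ is Yoneda continuous because $\thda$ is (a left adjoint) and $k_1,k_2$ preserve Yoneda limits: a Yoneda limit of a forward Cauchy net $\{\phi_j\}$ is computed by the same pointwise formula $z\mapsto\inf_i\sup_{j\ge i}\phi_j(z)$ in $\CP X$ and in $([0,\infty],d_L)^X$ (Example~\ref{Yoneda limits in PX}, the latter being a presheaf space), and the pointwise liminf of a forward Cauchy net of flat weights is again flat, so the same formula also computes Yoneda limits in $\CF X$. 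Finally $r\circ s=1_X$: for $x\in X$ the element $s(x)=\thda(x)$ is a weight, so $p(s(x))=\thda(x)$ and $r(s(x))=L(\thda(x))=\colim(\thda(x))=x$. Thus $(X,d)$ is a retract of $([0,\infty],d_L)^X$ in ${\sf Met}^\ua$.

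The crux of the argument, and the main obstacle, is producing a Yoneda continuous \emph{section}: the naive section $X\xrightarrow{\y}\CP X\hookrightarrow([0,\infty],d_L)^X$ is built from a right adjoint and in general fails to preserve Yoneda limits (just as $x\mapsto{\downarrow}x$ fails to be Scott continuous on the unit interval), and it is exactly continuity -- through the left adjoint $\thda$ -- that repairs this; dually, cocompleteness is what lets the retraction $L\circ p$ be defined on all of $[0,\infty]^X$ while remaining a left adjoint, hence Yoneda continuous. The only nontrivial checks are that $p$ is the claimed left adjoint and that the inclusions $k_1,k_2$ are Yoneda continuous -- both routine.
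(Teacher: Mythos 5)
Your proposal is correct and follows essentially the same route as the paper: the section is the composite of $\thda$ with the inclusions $\CF X\hookrightarrow\CP X\hookrightarrow([0,\infty],d_L)^X$, and your retraction $L\circ p$ is literally the paper's $\colim\circ\overline{1_X}$ (viewing $([0,\infty],d_L)^X$ as the weights of the discrete metric on $X$ and applying Lemma \ref{left kan}), with Yoneda continuity obtained from left adjoints in both cases. The only differences are expository: you spell out the easy direction (which the paper dispatches by citing Proposition \ref{retracts are complete} and Lemma \ref{retracts are continuous}) and justify Yoneda continuity of the inclusion $\CP X\hookrightarrow[0,\infty]^X$ via the pointwise liminf formula instead of noting it is a left adjoint, both of which are fine.
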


\begin{proof}Sufficiency is contained in    Proposition \ref{retracts are complete} and Lemma \ref{retracts are continuous}, it remains to prove the necessity. Given a cocomplete, continuous and separated metric space $(X,d)$, let $s$ be the  composite
\[(X,d)\stackrel{\thda}{\lra} (\CF X, \overline{d})\lra(\CP X, \overline{d}) \lra([0,\infty],d_L)^X,\] where the latter two arrows are inclusions. Since both $\thda:(X,d)\lra(\CF X, \overline{d})$ and the inclusion $(\CP X, \overline{d})\lra([0,\infty],d_L)^X$ are left adjoints,   and since the inclusion $(\CF X, \overline{d})\lra(\CP X, \overline{d})$ is Yoneda continuous, it follows that $s:(X,d)\lra([0,\infty],d_L)^X$ is Yoneda continuous.

Let $p$ be the discrete metric on $X$, i.e., $p(x,x)=0$ and $p(x,y)=\infty$ whenever $x\not=y$. Then $([0,\infty],d_L)^X$ is exactly the metric space $(\CP X, \overline{p})$ of weights of $(X,p)$. Since the identity $1_X:(X,p)\lra(X,d)$ is non-expansive, the map $\overline{1_X}:([0,\infty],d_L)^X\lra(\CP X, \overline{d})$ is a left adjoint by Lemma \ref{left kan}. Let  $r$ be the composite \[\colim\circ \overline{1_X}:([0,\infty],d_L)^X\lra(\CP X, \overline{d})\lra(X,d).\] Then $r$, as a left adjoint, is Yoneda continuous.

Finally, since $r\circ s$ is the identity map on $(X,d)$, it follows that $(X,d)$ is a retract of $([0,\infty],d_L)^X$ in the category   of   metric spaces and Yoneda continuous maps. \end{proof}

It is well known that a continuous lattice together with its  Scott topology is an injective space \cite{Scott72,domains}, but, this is not true in the metric setting.

\begin{exmp}\label{5.8}  The approach space $\Sigma([0,\infty],d_L)$ is not injective. Since $([0,\infty],d_L)$ is Smyth complete, the Scott distance and the Alexandroff distance on $([0,\infty],d_L)$ coincide by Corollary \ref{Smyth}. Using this fact, it can be checked that $\Sigma([0,\infty],d_L)$ coincides with the approach space $[0,\infty]^{\rm op}$ in \cite[Example 4.14]{GH},  hence it is not injective. We also include here a direct verification for convenience of the reader.
Suppose on the contrary that $\Sigma([0,\infty],d_L)$ is injective. Consider the subspace $\{0,\infty\}$ of $\mathbb{P}$. Define \[f:\{0,\infty\}\lra\Sigma([0,\infty],d_L)\] by $f(0)=\infty$ and $f(\infty)=0$. Then $f$ is a contraction, so, there is a contraction $f^*:\mathbb{P}\lra\Sigma([0,\infty],d_L)$ that extends $f$. Since $\mathbb{P}=\Sigma([0,\infty],d_R)$, it follows   that $f^*:([0,\infty],d_R)\lra([0,\infty],d_L)$ is Yoneda continuous, in particular,   non-expansive. Thus, for all $x<\infty$, one has
\[x=d_R(x,0)\geq d_L(f^*(x),f^*(0))=d_L(f^*(x),\infty).  \] Therefore, $f^*(x)=\infty$ for all $x<\infty$.
Since $\infty$ is a Yoneda limit of the forward Cauchy sequence $\{n\}$  in $([0,\infty],d_R)$, it follows that $0=f^*(\infty)$ is a Yoneda limit of the constant sequence with value $\infty$ in $([0,\infty],d_L)$, a contradiction.
\end{exmp}

Therefore,  a cocomplete and continuous separated metric space together with the Scott distance need not be an injective approach space. But, every $T_0$ injective approach space must be of this form.

\begin{thm}\label{main} Let $(X,\delta)$ be a $T_0$ injective approach space. Then  $\Omega(X,\delta)$ is a cocomplete and continuous separated  metric space and $(X,\delta)=\Sigma\Omega(X,\delta)$. \end{thm}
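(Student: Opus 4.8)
The plan is to exploit Proposition~\ref{injective = retract}, which says that a $T_0$ injective approach space $(X,\delta)$ is a retract of some power $\mathbb{P}^I$ in the category ${\sf App}$. So fix contractions $r:\mathbb{P}^I\lra(X,\delta)$ and $s:(X,\delta)\lra\mathbb{P}^I$ with $r\circ s=1_X$. Applying the functor $\Omega$ (which preserves retracts, being a functor) and using $\mathbb{P}=\Sigma([0,\infty],d_R)$ together with Lemma~\ref{4.3} and Proposition~\ref{Scott distance preserves product}, we get $\Omega(\mathbb{P}^I)=\Omega(\Sigma(([0,\infty],d_R)^I))=([0,\infty],d_R)^I$, so $\Omega(X,\delta)$ is a retract of $([0,\infty],d_R)^I$ in ${\sf Met}$. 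But a contraction between approach spaces restricts to a non-expansive map on specialization metrics, and more is true: I would check that $\Omega$ of a contraction is automatically Yoneda continuous when the domain is (a power of) $\mathbb{P}$, or more simply invoke Theorem~\ref{contraction=Yoneda continuous} after first establishing $(X,\delta)=\Sigma\Omega(X,\delta)$. Actually the cleanest route is to prove the identity $(X,\delta)=\Sigma\Omega(X,\delta)$ first, since then $r,s$ become maps in ${\sf Met}^\ua$ by Theorem~\ref{contraction=Yoneda continuous}.

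So the first main step is to show $\delta=\sigma$, where $\sigma$ is the Scott distance of $\Omega(X,\delta)=(X,d)$ with $d(x,y)=\delta(x,\{y\})$. The inequality $\delta\leq\sigma$ (equivalently, every Scott weight of $(X,d)$ is a regular function of $(X,\delta)$) is one direction; the reverse $\sigma\leq\delta$ (every regular function of $(X,\delta)$ is a Scott weight of $(X,d)$) is the other. For $\sigma\le\delta$: a regular function $\phi$ of $(X,\delta)$ is a contraction $\phi:(X,\delta)\lra\mathbb{P}$, hence $\phi:\Omega(X,\delta)\lra\Omega\mathbb{P}=([0,\infty],d_R)$ is non-expansive, i.e.\ a weight; I must upgrade ``non-expansive'' to ``Yoneda continuous'' (so that $\phi$ is a Scott weight by Proposition~\ref{order convergence}(3)), and this is exactly where the retract is needed: pulling back along $r:\mathbb{P}^I\lra(X,\delta)$, regular functions of $(X,\delta)$ become regular functions of $\mathbb{P}^I$ (restricted, via $s$), and by Corollary~\ref{subbasis of product} the regular functions of $\mathbb{P}^I$ are generated from coordinate projections, which are manifestly Yoneda continuous as maps $([0,\infty],d_R)^I\lra([0,\infty],d_R)$; Yoneda continuity is preserved by $\sup$, $\min$, $\cdot+r$, $\cdot\ominus r$ (this should be checked or cited via Proposition~\ref{order convergence}), hence every regular function of $\mathbb{P}^I$ is Yoneda continuous on $([0,\infty],d_R)^I$, and composing with the Yoneda continuous $s$ (once we know $s$ is Yoneda continuous — which follows because $s$ is a contraction and $\Sigma$ is full on the relevant maps, or directly) shows every regular function of $(X,\delta)$ is Yoneda continuous, i.e.\ a Scott weight. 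For $\delta\le\sigma$: every Scott weight $\phi$ of $(X,d)$ is, by definition, a Yoneda continuous map $(X,d)\lra([0,\infty],d_R)$; precompose with $r:([0,\infty],d_R)^I\lra(X,d)$ — here I need $r$ Yoneda continuous — to get a Yoneda continuous map, hence a Scott weight on $([0,\infty],d_R)^I$, which by Proposition~\ref{Scott distance preserves product} ($\Sigma(([0,\infty],d_R)^I)=\mathbb{P}^I$) is a regular function of $\mathbb{P}^I$; restricting along $s$ gives a regular function of $(X,\delta)$. Thus $\phi$ is a regular function of $(X,\delta)$, giving $\delta\le\sigma$. Combining, $\delta=\sigma$, i.e.\ $(X,\delta)=\Sigma\Omega(X,\delta)$.

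Granting $(X,\delta)=\Sigma\Omega(X,\delta)$, the maps $r,s$ are now morphisms in ${\sf Met}^\ua$ by Theorem~\ref{contraction=Yoneda continuous} and exhibit $(X,d)$ as a retract of $([0,\infty],d_R)^I$ in ${\sf Met}^\ua$. Since $([0,\infty],d_R)$ is Yoneda complete and algebraic (Example~\ref{d_L} and the opening remarks of Section~4), so is the power $([0,\infty],d_R)^I$ (by the product result quoted after Proposition~\ref{product of algebraic}), hence it is continuous by Proposition~\ref{algebraic}; it is also cocomplete by Example~\ref{cocomplete examples}(3). Retracts of continuous metric spaces in ${\sf Met}^\ua$ are continuous by Lemma~\ref{retracts are continuous}, and retracts of cocomplete metric spaces are cocomplete; here I should be slightly careful, since Proposition~\ref{retracts are complete} is stated for retracts in ${\sf Met}$, not ${\sf Met}^\ua$ — but a ${\sf Met}^\ua$-retract is in particular a ${\sf Met}$-retract, so it applies. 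Separatedness of $\Omega(X,\delta)$ is immediate from the $T_0$ hypothesis on $(X,\delta)$. Therefore $\Omega(X,\delta)$ is a cocomplete, continuous, separated metric space, completing the proof.

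The main obstacle I anticipate is the bookkeeping around Yoneda continuity of $r$ and $s$: a priori $\Omega$ only gives non-expansive maps, and the whole argument hinges on promoting these to Yoneda continuous maps, which seems circular (Theorem~\ref{contraction=Yoneda continuous} needs $(X,\delta)=\Sigma\Omega(X,\delta)$, but that is what we are proving). The way out is to break the circularity by working directly with the regular-function description: one first shows, \emph{without} assuming $\delta=\sigma$, that $s$ pulls regular functions of $(X,\delta)$ back from regular functions of $\mathbb{P}^I$, and that all regular functions of $\mathbb{P}^I$ are Yoneda continuous on $([0,\infty],d_R)^I$ (via the subbasis of projections), and that $r$ composed with any Yoneda continuous map stays a contraction because $\Sigma$ applied to $([0,\infty],d_R)^I$ is $\mathbb{P}^I$ and $r:\mathbb{P}^I\to(X,\delta)$ is given to be a contraction — so the Yoneda continuity is extracted coordinate-by-coordinate on the $\mathbb{P}^I$ side, where everything is already known, rather than on the unknown $(X,\delta)$ side. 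Once this is set up carefully the two inclusions $\mathcal{R}(X,\delta)\subseteq\{$Scott weights of $(X,d)\}$ and conversely fall out, and the rest is routine.
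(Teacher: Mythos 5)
Your overall skeleton matches the paper's (retract of $\mathbb{P}^I$ via Proposition \ref{injective = retract}, the identification $\Omega(\mathbb{P}^I)=([0,\infty],d_R)^I$ and $\mathbb{P}^I=\Sigma(([0,\infty],d_R)^I)$ from Proposition \ref{Scott distance preserves product}, cocompleteness via a ${\sf Met}$-retract, separatedness from $T_0$, continuity via Lemma \ref{retracts are continuous}), but there is a genuine gap at exactly the point you flag: the Yoneda continuity of $r:([0,\infty],d_R)^I\lra\Omega(X,\delta)$ and $s:\Omega(X,\delta)\lra([0,\infty],d_R)^I$. Both of your inclusions depend on it --- ``regular $\Rightarrow$ Scott weight'' uses Yoneda continuity of $s$, and ``Scott weight $\Rightarrow$ regular'' uses Yoneda continuity of $r$ (to make $\phi\circ r$ a Scott weight of the power) --- and your only justification is that ``$s$ is a contraction and $\Sigma$ is full on the relevant maps'', which is precisely the circular appeal to Theorem \ref{contraction=Yoneda continuous} on the unknown space $(X,\delta)$. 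The closing paragraph's promise to ``extract Yoneda continuity coordinate-by-coordinate on the $\mathbb{P}^I$ side'' never turns into an actual argument, and it cannot as stated: a map that is merely a non-expansive retraction in ${\sf Met}$ onto a Yoneda complete space need not be Yoneda continuous, so something beyond the metric-level retract data must be used. The same issue recurs in your continuity step, since Lemma \ref{retracts are continuous} requires the retraction to live in ${\sf Met}^\ua$. (A small separate slip: the inclusion $\mathcal{R}X\subseteq\{\text{Scott weights}\}$ yields $\delta\le\sigma$, not $\sigma\le\delta$; this is harmless since you treat both inclusions anyway.)

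The missing idea --- and the heart of the paper's proof --- is to apply Theorem \ref{contraction=Yoneda continuous} not to $r$ or $s$ separately but to the composite $s\circ r$: it is an endo-contraction of $\mathbb{P}^I$, and since Proposition \ref{Scott distance preserves product} identifies $\mathbb{P}^I$ with $\Sigma(([0,\infty],d_R)^I)$, the theorem applies without any circularity and shows that $s\circ r:([0,\infty],d_R)^I\lra([0,\infty],d_R)^I$ is Yoneda continuous. One then bootstraps: $r$ is Yoneda continuous because $s$ is isometric (Yoneda limits in $\Omega(X,\delta)$ can be tested after applying $s$, where $s\circ r$ is known to preserve them), and then $s$ is Yoneda continuous using Yoneda completeness of $([0,\infty],d_R)^I$, Yoneda continuity of $r$, separatedness of $\Omega(X,\delta)$, and again Yoneda continuity of $s\circ r$. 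Once $r,s$ are in ${\sf Met}^\ua$ your plan closes, and in fact more cheaply than by comparing regular functions with Scott weights: the paper simply observes that $1_X=r\circ s$ is then a contraction both from $(X,\delta)$ to $\Sigma\Omega(X,\delta)$ and back, giving the equality outright. Incidentally, your ``regular $\Rightarrow$ Scott'' half can be repaired without Yoneda continuity of $s$: take the Yoneda limit $y$ of $\{s(x_\mu)\}_\mu$ in the Yoneda complete power and check $d(x,r(y))\le\inf_\lambda\sup_{\mu\ge\lambda}\sup_{i\in I}d_R(s(x_\mu)_i,y_i)=0$; but the other half really does require the $s\circ r$ argument or an equivalent, so the gap is essential.
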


\begin{proof}  By injectivity of $(X,\delta)$ and Proposition \ref{injective = retract},    there exist contractions $s:(X,\delta)\lra \mathbb{P}^I$ and $r: \mathbb{P}^I\lra(X,\delta)$ such that $r\circ s=1$.  We finish the proof in four steps.

\textbf{Step 1}. $\Omega(X,\delta)$ is separated. This is easy since $ (X,\delta) $ is $T_0$.

\textbf{Step 2}. $\Omega(X,\delta)$ is  cocomplete. By Proposition \ref{Scott distance preserves product}, we have \[\Omega(\mathbb{P}^I)=\Omega\Sigma (([0,\infty],d_R)^I)  = ([0,\infty],d_R)^I,\]  then $\Omega(\mathbb{P}^I)$ is cocomplete by Example \ref{cocomplete examples}(3). So, as a retract of $\Omega(\mathbb{P}^I)$, $\Omega(X,\delta)$ is cocomplete.

\textbf{Step 3}. $\Omega(X,\delta)$ is continuous.
 Since  $\Omega(\mathbb{P}^I)= ([0,\infty],d_R)^I $   is continuous,  it suffices,  by Lemma  \ref{retracts are continuous}, to show that both the non-expansive maps $r:([0,\infty],d_R)^I\lra\Omega(X,\delta)$ and $s:\Omega(X,\delta)\lra([0,\infty],d_R)^I$  are Yoneda continuous.

Since \[s\circ r:\Sigma(([0,\infty],d_R)^I) \lra\Sigma(([0,\infty],d_R)^I)\] is a contraction, it follows,  by Theorem \ref{contraction=Yoneda continuous}, that \[s\circ r:([0,\infty],d_R)^I\lra([0,\infty],d_R)^I\] is Yoneda continuous.

Now we show that
$r:([0,\infty],d_R)^I\lra\Omega(X,\delta)$ is Yoneda continuous.
Suppose $a$ is a Yoneda limit of  a forward Cauchy net $\{a_\lam\}_\lam$ in $([0,\infty],d_R)^I$. By Yoneda continuity of  $s\circ r$, we obtain that $\{s\circ r(a_\lam)\}_\lam$ is a forward Cauchy net in $([0,\infty],d_R)^I$ having $s\circ r(a)$ as a Yoneda limit. Since $s:\Omega(X,\delta)\lra([0,\infty],d_R)^I$ is isometric, it follows that $r(a)$ is a Yoneda limit of $\{r(a_\lam)\}_\lam$ in $\Omega(X,\delta)$, hence $r:([0,\infty],d_R)^I\lra\Omega(X,\delta)$ is Yoneda continuous.

Next, we show that $s:\Omega(X,\delta)\lra([0,\infty],d_R)^I$  is Yoneda continuous. That is, if $x$ is a Yoneda limit of a forward Cauchy net $\{x_\lam\}_\lam$   in $\Omega(X,\delta)$,  then $s(x)$ is a Yoneda limit of $\{s(x_\lam)\}_\lam$.  Since  $([0,\infty],d_R)^I$ is Yoneda complete, the  forward Cauchy net $\{s(x_\lam)\}_\lam$   has a Yoneda limit, say $y$. By  Yoneda continuity of $r$, $r(y)$ is a Yoneda limit of $\{r\circ s(x_\lam)\}_\lam=\{x_\lam\}_\lam$ in $\Omega(X,\delta)$, hence $r(y)=x$ by separatedness of $\Omega(X,\delta)$. Then, appealing to the Yoneda continuity of $s\circ r$,  we obtain that $s\circ r(y)$ is a Yoneda limit of $\{s\circ r\circ s(x_\lam)\}_\lam=\{s(x_\lam)\}_\lam$, hence $y=s\circ r(y)=s(x)$, showing that $s(x)$ is a Yoneda limit of $\{s(x_\lam)\}_\lam$.

\textbf{Step 4}.  $(X,\delta)=\Sigma\Omega(X,\delta)$. Since both     $r:([0,\infty],d_R)^I\lra\Omega(X,\delta)$ and $s:\Omega(X,\delta)\lra([0,\infty],d_R)^I$   are Yoneda continuous, then  both $r:\mathbb{P}^I\lra\Sigma\Omega(X,\delta)$   and $ s:\Sigma\Omega(X,\delta)\lra\mathbb{P}^I$ are contractions by Theorem \ref{contraction=Yoneda continuous}. Therefore, both \[1_X=r\circ s:(X,\delta)\lra \mathbb{P}^I\lra\Sigma\Omega(X,\delta)\] and \[1_X=r\circ s:\Sigma\Omega(X,\delta)\lra\mathbb{P}^I\lra(X,\delta)\] are contractions, showing that $(X,\delta)=\Sigma\Omega(X,\delta)$.
\end{proof}

\begin{cor}\label{5.7} The category   of injective $T_0$ approach spaces and contractions is isomorphic to the full subcategory of ${\sf Met}^\ua$ consisting of retracts of   powers of $([0,\infty],d_R)$. \end{cor}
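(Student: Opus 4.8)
The plan is to exhibit the functors $\Sigma:{\sf Met}^\ua\lra{\sf App}$ and $\Omega:{\sf App}\lra{\sf Met}$ as mutually inverse isomorphisms between the full subcategory $\mathcal{M}$ of ${\sf Met}^\ua$ whose objects are retracts of powers of $([0,\infty],d_R)$ and the category $\mathcal{A}$ of injective $T_0$ approach spaces and contractions. Since $\Sigma$ and $\Omega$ are already functors and neither one alters underlying maps, and since $\Omega\Sigma=\mathrm{id}$ on all of ${\sf Met}$ by Lemma \ref{4.3} while $\Sigma\Omega=\mathrm{id}$ on $\mathcal{A}$ by the last clause of Theorem \ref{main}, it will be enough to check that $\Sigma$ carries $\mathcal{M}$ into $\mathcal{A}$ and that $\Omega$ carries $\mathcal{A}$ into $\mathcal{M}$, both on objects and on morphisms.

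For the first inclusion I would first note that an object $(X,d)$ of $\mathcal{M}$ is separated: in a retraction $r\circ s=1$ the section $s:(X,d)\lra([0,\infty],d_R)^I$ is isometric, so $d(x,y)=d(y,x)=0$ forces $s(x)=s(y)$ by separatedness of $([0,\infty],d_R)^I$, whence $x=r(s(x))=r(s(y))=y$. By Lemma \ref{4.3} this makes $\Sigma(X,d)$ a $T_0$ approach space. Applying the functor $\Sigma$ to the retraction exhibits $\Sigma(X,d)$ as a retract of $\Sigma(([0,\infty],d_R)^I)$, which equals $\mathbb{P}^I$ by Proposition \ref{Scott distance preserves product} (the case $I=\varnothing$ being trivial); hence $\Sigma(X,d)$ is injective by Proposition \ref{injective = retract}. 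On morphisms, Theorem \ref{contraction=Yoneda continuous} turns a Yoneda continuous map between objects of $\mathcal{M}$ into a contraction. So $\Sigma$ restricts to a functor $\mathcal{M}\lra\mathcal{A}$.

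For the second inclusion, let $(X,\delta)$ be injective and $T_0$. By Proposition \ref{injective = retract} there are contractions $\bar r:\mathbb{P}^I\lra(X,\delta)$ and $\bar s:(X,\delta)\lra\mathbb{P}^I$ with $\bar r\circ\bar s=1$. Using $(X,\delta)=\Sigma\Omega(X,\delta)$ from Theorem \ref{main} together with $\mathbb{P}^I=\Sigma(([0,\infty],d_R)^I)$ from Proposition \ref{Scott distance preserves product}, these are contractions between Scott-distance spaces, so Theorem \ref{contraction=Yoneda continuous} writes $\bar r=\Sigma r$ and $\bar s=\Sigma s$ for Yoneda continuous maps $r,s$; faithfulness of $\Sigma$ gives $r\circ s=1$, so $\Omega(X,\delta)$ is a retract of $([0,\infty],d_R)^I$ in ${\sf Met}^\ua$, i.e. an object of $\mathcal{M}$. (Alternatively one may read this off Steps 2 and 3 of the proof of Theorem \ref{main}.) On morphisms, a contraction $f$ in $\mathcal{A}$ is, via Theorem \ref{main}, a contraction between Scott-distance spaces, so $\Omega f$ is Yoneda continuous by Theorem \ref{contraction=Yoneda continuous}. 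Thus $\Omega$ restricts to a functor $\mathcal{A}\lra\mathcal{M}$, and the two restrictions are mutually inverse by the identities recalled in the first paragraph.

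I expect the only delicate point to be the second inclusion: one must ensure that the retraction witnessing $\Omega(X,\delta)\in\mathcal{M}$ is realized by \emph{Yoneda continuous} maps, not merely non-expansive ones, and that the ambient power is one of $([0,\infty],d_R)$ rather than of $([0,\infty],d_L)$ as would come out of Proposition \ref{3.30}. Transporting the ${\sf App}$-retraction back along the full and faithful functor $\Sigma$, using $\mathbb{P}=\Sigma([0,\infty],d_R)$ and Proposition \ref{Scott distance preserves product}, settles both issues simultaneously; everything else is bookkeeping about functors that leave underlying maps unchanged.
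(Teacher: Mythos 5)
Your proposal is correct and follows essentially the same route as the paper: the paper's proof likewise invokes the argument of Theorem \ref{main} to place $\Omega(X,\delta)$ among the retracts of powers of $([0,\infty],d_R)$, and then combines Lemma \ref{4.3} with Theorem \ref{contraction=Yoneda continuous} (together with Proposition \ref{Scott distance preserves product} and Proposition \ref{injective = retract}) to see that $\Sigma$ and $\Omega$ restrict to mutually inverse functors. You merely spell out details the paper leaves implicit, such as the separatedness of retracts and the transport of the retraction along the full and faithful $\Sigma$.
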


\begin{proof} Write {\sf InjApp} for the category  of injective $T_0$ approach spaces and contractions  and write {\sf C} for the full subcategory of ${\sf Met}^\ua$ consisting of retracts of   powers of $([0,\infty],d_R)$.
The argument of Theorem \ref{main} shows that for each injective  $T_0$ approach space $(X,\delta)$, $\Omega(X,\delta)$ is an object in {\sf C}. This fact together with Lemma \ref{4.3} and Theorem \ref{contraction=Yoneda continuous} show  that $\Sigma:{\sf C}\lra{\sf InjApp}$ and   $\Omega: {\sf InjApp}\lra{\sf C}$ are inverse to each other. \end{proof}

The relationship between injective $T_0$ approach spaces and cocomplete and continuous separated metric spaces is summarized as follows. Cocomplete and continuous separated metric spaces  are  retracts of powers of $([0,\infty],d_L)$ in the category ${\sf Met}^\ua$;  injective $T_0$ approach spaces are essentially retracts of powers of  $([0,\infty],d_R)$ in ${\sf Met}^\ua$. The asymmetry between $([0,\infty],d_R)$ and $([0,\infty],d_L)$ accounts for the failure of the metric version of the isomorphism of Scott between injective $T_0$ spaces and continuous lattices.

\vskip 5pt

\noindent{\bf Acknowledgement} The authors thank sincerely the referee for the  thorough analysis of the paper and  the very helpful  comments and   suggestions.


\begin{thebibliography}{10}\setlength{\itemsep}{-1pt}
\bibitem{AHS}J. Ad\'{a}mek, H. Herrlich, G. E. Strecker, Abstract and Concrete Categories: The Joy of Cats,  John Wiley and Sons, New York, 1990.

\bibitem{AR89} P. America, J. J. M. M. Rutten, Solving reflexive domain equations in a category of complete metric spaces, Journal of Computer and System Sciences 39 (1989)  343-375.


\bibitem{BvBR1998} M. M. Bonsangue, F. van Breugel, J. J. M. M. Rutten, Generalized metric space: completion, topology, and powerdomains via the Yoneda embedding, Theoretical Computer Science 193 (1998)  1-51.

\bibitem{CHT}M. M. Clementino, D. Hofmann, W. Tholen, One setting for all: metric, topology, uniformity, approach structure, Applied Categorical Structures 12 (2004) 127-154.

\bibitem{FK97} R.C. Flagg, R. Kopperman, Continuity spaces: Reconciling domains and metric spaces, Theoretical Computer Science 177 (1997) 111-138.

\bibitem{FS02} R. C. Flagg, P. S\"{u}nderhauf, The essence of ideal completion in quantitative form, Theoretical Computer Science 278 (2002) 141-158.

\bibitem{FSW} R. C. Flagg, P. S\"{u}nderhauf, K. R. Wagner, A logical approach to quantitative domain theory, Topology Atlas Preprint No. 23, 1996.  http://at.yorku.ca/e/a/p/p/23.htm

\bibitem{domains} G. Gierz, K. H. Hofmann, K. Keimel, J. D. Lawson, M. Mislove, D. S. Scott, Continuous Lattices and Domains, Cambridge University Press, 2003.

\bibitem{Goubault} J. Goubault-Larrecq,   Non-Hausdorff Topology and Domain Theory, Cambridge University Press, Cambridge, 2013.

\bibitem{GN} J. Goubault-Larrecq, Ng K. Min, A few notes on formal balls, Logical Methods in Computer Science  13 (2017) 1-34.

\bibitem{GH} G. Gutierres, D. Hofmann, Approaching metric domains, Applied Categorical Structures 21 (2013) 617-650.

\bibitem{Hof2011} D. Hofmann, Injective spaces via adjunction, Journal of Pure and Applied Algebra 215 (2011)  283-302.

\bibitem{Hof2013} D. Hofmann, Duality for distributive spaces,  Theory and Applications of Categories,   28 (2013)   66-122.

\bibitem{HST} D. Hofmann, G. J. Seal,   W. Tholen  (editors), Monoidal Topology: A Categorical Approach to Order, Metric, and Topology,  Encyclopedia of Mathematics and its Applications, Vol. 153, Cambridge University Press, Cambridge, 2014.
\bibitem{HT2010} D. Hofmann,   W. Tholen, Lawvere completion and separation via closure, Applied Categorical Structures 18 (2010) 259-287.

\bibitem{HW2011}D. Hofmann, P. Waszkiewicz, Approximation in quantale-enriched categories, Topology and its Applications 158 (2011)  963-977.

\bibitem{HW2012}D. Hofmann, P. Waszkiewicz, A duality of quantale-enriched categories, Journal of Pure and Applied Algebra 216 (2012) 1866-1878.

\bibitem{Kelly} G. M. Kelly,  Basic Concepts of Enriched Category Theory, London Mathematical Society Lecture Notes Series, Vol. 64, Cambridge University Press, Cambridge, 1982.

\bibitem{KS05}G. M. Kelly, V. Schmitt, Notes on enriched categories with colimits of some class, Theory and Applications of Categories 14 (2005) 399-423.


\bibitem{KW2011} M. Kostanek, P. Waszkiewicz, The formal ball model for $\CQ$-categories, Mathematical Structures in Computer Science 21 (2011) 41-64.

\bibitem{KS2002}  H. P. K\"{u}nzi, M. P. Schellekens, On the Yoneda completion of a quasi-metric space, Theoretical Computer Science 278 (2002) 159-194.

\bibitem{Lawvere73}  F. W. Lawvere, Metric spaces, generalized logic, and closed categories, Rendiconti del Seminario Mat\'{e}matico e Fisico di Milano 43 (1973) 135-166.

\bibitem{LZ16}W. Li, D. Zhang, Sober  metric approach spaces, Topology and its Applications 233 (2018) 67-88.

\bibitem{RL89} R. Lowen, Approach spaces: a common supercategory of \textbf{TOP} and \textbf{MET}, Mathematische Nachrichten 141 (1989) 183-226.

\bibitem{RL97} R. Lowen, Approach Spaces: the Missing Link in the Topology-Uniformity-Metric Triad, Oxford University Press, 1997.

\bibitem{Lowen15} R. Lowen, Index Analysis, Approach Theory at Work, Springer, 2015.

\bibitem{Ru} J. J. M. M. Rutten, Weighted colimits and formal balls in generalized metric spaces, Topology and its Applications 89 (1998) 179-202.

\bibitem{Scott72} D. S. Scott, Continuous lattices, In: F. W. Lawvere (editor), Toposes, Algebraic Geometry and Logic,  Lecture Notes in Mathematics,  Vol. 274, pp. 97-136. Springer-Verlag, 1972.

\bibitem{Smyth87} M. B. Smyth, Quasi-uniformities: Reconciling domains with metric spaces, Lecture Notes in Computer Science, Vol. 298, Springer, Berlin, 1987, pp. 236-253.

\bibitem{Smyth94} M. B. Smyth, Completeness of quasi-uniform and syntopological spaces, Journal of London Mathematical Society 49 (1994) 385-400.

\bibitem{SV2005} S. Vickers, Localic completion of generalized metric spaces, Theory and Application of Categories 14 (2005) 328-356.


\bibitem{Wagner97} K. R. Wagner, Liminf convergence in $\Om$-categories, Theoretical Computer Science 184 (1997) 61-104.

\bibitem{Windels} B. Windels, The Scott approach structure: an extension of the Scott topology for quantitative domain theory, Acta Mathematica Hungarica 88 (2000) 35-44.





\end{thebibliography}
\end{document}